\newtheorem{theorem}{Theorem}[section]
\newtheorem{lemma}[theorem]{Lemma}
\newtheorem{claim}[theorem]{Claim}
\newtheorem{proposition}[theorem]{Proposition}
\newtheorem{corollary}[theorem]{Corollary}
\newtheorem{fact}[theorem]{Fact}
\theoremstyle{definition}
\newtheorem{example}[theorem]{Example}
\newtheorem{remark}[theorem]{Remark}
\newtheorem{definition}[theorem]{Definition}
\newtheorem{notation}[theorem]{Notation}
\newtheorem{observation}[theorem]{Observation}
\newtheorem{preg}[theorem]{Question}
\def\Ind#1#2{#1\setbox0=\hbox{$#1x$}\kern\wd0\hbox to 0pt{\hss$#1\mid$\hss}
\lower.9\ht0\hbox to 0pt{\hss$#1\smile$\hss}\kern\wd0}
\def\Notind#1#2{#1\setbox0=\hbox{$#1x$}\kern\wd0\hbox to 0pt{\mathchardef
\nn=12854\hss$#1\nn$\kern1.4\wd0\hss}\hbox to
0pt{\hss$#1\mid$\hss}\lower.9\ht0 \hbox to
0pt{\hss$#1\smile$\hss}\kern\wd0}
\def\ind{\mathop{\mathpalette\Ind{}}}
\def\ind{\mathop{\mathpalette\Ind{}}}                              
\def\SU{\operatorname{SU}}                        
\newcommand{\bp}{\begin{proposition}}
\newcommand{\ep}{\end{proposition}}
\newcommand{\bd}{\begin{definition}}
\newcommand{\ed}{\end{definition}}
\newcommand{\bej}{\begin{ejem}}
\newcommand{\eej}{\end{ejem}}
\newcommand{\bl}{\begin{lemma}}
\newcommand{\el}{\end{lemma}}
\newcommand{\bh}{\begin{fact}}
\newcommand{\eh}{\end{fact}}
\newcommand{\bpreg}{\begin{preg}}
\newcommand{\epreg}{\end{preg}}
\newcommand{\bo}{\begin{obs}}
\newcommand{\eo}{\end{obs}}
\newcommand{\bcon}{\begin{conj}}
\newcommand{\econ}{\end{conj}}
\newcommand{\brmk}{\begin{remark}}
\newcommand{\ermk}{\end{remark}}
\newcommand{\bc}{\begin{corollary}}
\newcommand{\ec}{\end{corollary}}
\newcommand{\bconst}{\begin{const}}
\newcommand{\econst}{\end{const}}
\newcommand{\bdem}{\begin{proof}}
\newcommand{\edem}{\end{proof}}
\newcommand{\benum}{\begin{enumerate}}
\newcommand{\eenum}{\end{enumerate}}
\newcommand{\bitem}{\begin{itemize}}
\newcommand{\eitem}{\end{itemize}}
\newcommand{\be}{\begin{ejem}}
\newcommand{\ee}{\end{ejem}}
\newcommand{\bt}{\begin{theorem}}
\newcommand{\et}{\end{theorem}}
\newcommand{\bclaim}{\begin{claim}}
\newcommand{\eclaim}{\end{claim}}
\newcommand{\Le}{\mathcal{L}}
\newcommand{\U}{\mathcal{U}}
\newcommand{\Alg}{\operatorname{Alg}}
\newcommand{\Dim}{\operatorname{Dim}}
\def\tp{\operatorname{tp}}
\def\ldim{\operatorname{ldim}}
\def\acl{\textsf{acl}}
\def\scl{\textsf{scl}} 
\def\dcl{\textsf{dcl}}
\def\bigcupdot{\mathop{\hbox to 0pt{\hskip.43em$\dot{\ }$\hss}\bigcup}}
\definecolor{electricindigo}{rgb}{0.44, 0.0, 1.0}
\begin{document}
\title{Dimension and measure in pseudofinite $H$-structures}

\author{Alexander Berenstein}
\address{Alexander Berenstein\\ Departamento de Matemáticas. Universidad de los Andes\\ Carrera 1 No. 18A-10, Edificio H, Bogot\'a 111711, Colombia.}

\author{Dar\'io Garc\'ia}
\address{Dar\'io Garc\'ia\\ Departamento de Matemáticas. Universidad de los Andes\\ Carrera 1 No. 18A-10, Edificio H, Bogot\'a 111711, Colombia.}
\email{da.garcia268@uniandes.edu.co}

\author{Tingxiang Zou}
\address{Tingxiang Zou\\ Universit\'e Claude Bernard - Lyon 1. CNRS 5208, Institut Camille Jordan\\ 43 Blvd. du 11 Novembre 1918, F.69622 Villeurbanne Cedex, France.}
\curraddr{Fachbereich Mathematik und Informatik der Universit\"{a}t M\"{u}nster, Orl\'{e}ans-Ring 10, 48149 M\"{u}nster, Germany.}
\email{tzou@uni-muenster.de}
\thanks{The authors would like to thank Facultad de Ciencias Universidad de los Andes for its support through the project \emph{Expansiones densas de teor\'ias geom\'etricas: grupos y medidas}. The second author was supported by the Programa Estancias Posdoctorales de Colciencias, at Universidad de los Andes. Contract FP44842-178-2018, Convocatoria 784-2017. The third author is supported by the China Scholarship Council and partially supported by ValCoMo
(ANR-13-BS01-0006).
}
\date{\today}
\subjclass[2010]{03C45, 03C20}
\keywords{supersimple theories, $H$-structures, measurable structures, asymptotic classes, pseudofinite dimension}

\maketitle

\begin{abstract}
We study $H$-structures associated to $SU$-rank 1 measurable structures. We prove 
that the $SU$-rank of the expansion is continuous and that it is 
uniformly definable in terms of the parameters of the formulas. We also
introduce notions of dimension and measure for definable sets in the expansion and prove they are 
uniformly definable in terms of the parameters of the formulas.
\end{abstract}

\section{Introduction}
We say that a theory $T$ is \emph{geometric} if for any model $M\models
T$ the algebraic closure satisfies the exchange property and $T$
eliminates the quantifier $\exists^{\infty}$. When a theory is geometric and $M\models T$, the pair $(\mathcal{P}(M),\acl)$ is a pregeometry and elimination of $\exists^{\infty}$ guarantees that 
for every formula $\varphi(\bar{x},\bar{y})$ and every $k\leq |\bar{x}|$, the set 
$\{\bar a\in M^{|\bar{y}|}:\dim(\varphi(\bar{x},\bar{a}))=k\}$ is definable. 
There are many examples of geometric
theories, among them $SU$-rank 1 theories, dense o-minimal theories, the field of $p$-adic numbers in a single sort, etc.

In this paper we will deal with dense-codense expansions of geometric theories. 
Recall that if we let $H$ be a new unary predicate and
$M\models T$, the expansion $(M,H(M))$ is \emph{dense-codense} (see Definition
\ref{hstruc}) if it satisfies the
\emph{density property} (roughly, for any non-algebraic $\mathcal{L}$-formula
$\varphi(x,\bar{b})$ with parameters in $M$, $\varphi(H(M),\bar{b})\neq
\emptyset$) and $M$ satisfies the \emph{extension property} over
$H(M)$ (roughly, for any non-algebraic $\mathcal{L}$-formula $\varphi(x,\bar{b})$ with
parameters in $M$, $\varphi(M,\bar{b})\setminus \acl(H(M)\bar{b}) \neq
\emptyset$).

Lovely pairs are dense-codense expansions where
$H(M)\preceq M$, and $H$-structures are dense-codense expansions where
$H(M)$ is an algebraically independent set.

Many properties such as stability and NIP are preserved when going from a geometric theory to the corresponding theories of dense-codense pairs. Similarly, when we start with an $SU$-rank 1 theory, the algebraic dimension coincides with an abstract dimension coming from forking-independence, and the expansion corresponding to $H$-structures has a supersimple theory of $SU$-rank less than or equal to
$\omega$ (see \cite{BeVa2}). 

A \emph{measurable structure} (see \cite{MS}) is a structure equipped with a function that assigns a dimension and a measure to every definable set, which is uniformly definable in terms of their parameters and satisfies certain conditions similar to those satisfied by ultraproducts of finite fields. In this setting not only do we have control on the dimension of definable sets (as we do in the geometric setting), but the measure provides a finer account of the size of the corresponding sets. Some measurable structures can be built as ultraproducts of structures in a $1$-dimensional asymptotic class and are pseudofinite of $SU$-rank 1 (see \cite[Lemma 4.1]{MS}) and thus geometric. The work of the third author \cite{Zou} shows that in this special case, the corresponding $H$-structure is also pseudofinite. In contrast, the associated theory of a \emph{lovely pair} (cf. \cite{BeVa}, \cite{Va}) may not be pseudofinite (see \cite[Lemma 1.4]{Zou}).

It is a natural question to test whether these expansions, assuming the underlying theory is measurable of $SU$-rank 1, become some sort of \emph{generalized measurable structures}. Namely, can we construct appropriate notions of dimension and measure which depend definably on the parameters of the defining formulas for $H$-structures of measurable theories?

The goal of this paper is to give a positive solution to this problem. 
Our solution depends on a detailed study of $SU$-rank in $H$-structures, showing that it is continuous and Cantor additive. Drawing parallels with the notion of measurable structures, we use information associated to the $SU$-rank of a formula as an abstract notion of dimension and prove 
that for each formula it is uniformly definable in terms of its parameters. In addition, and using the fact that the base theory comes from a measurable structure, we introduce a notion of measure for definable sets in $H$-structures, proving also uniform definability.

We would like to point out that some expansions of measurable structures are already known to preserve 
measurability. For example, when $T$ is measurable, eliminates $\exists^\infty$ and $\acl=\dcl$, then adding a generic predicate does not change forking (see \cite[Theorem 2.7]{ChPi}), and the result gives a new measurable structure (see
\cite[Theorem 5.11]{MS}). There is even some liberty on the choice of the size of the new predicate. By contrast, $H$-structures of non-trivial theories (and in some cases lovely pairs) increase the $SU$-rank from finite to infinite. As measurable structures are essentially finite-dimensional, we could not expect H-structures/lovely pairs to stay within this family. Instead, the resulting notions of dimension and measure for $H$-structures satisfy the properties of a \emph{generalized measurable structure}, which is a generalization of the concept of measurable structures. This idea has been studied by several authors and will appear in the paper \cite{AMSW} by S. Anscombe, D. Macpherson, C. Steinhorn and D. Wolf, which is currently in preparation.

This paper is divided as follows. In Section \ref{section:preliminaries} we recall the definition of $H$-structures and show some of its properties. In particular we characterize the $SU$-rank of a tuple and prove it is Cantor-additive. In Section \ref{section:dimension} we show that the $SU$-rank of a formula is uniformly definable in terms of its parameters. In Section \ref{section:comparing-dimensions} we use these results to show, under certain additional hypothesis, that if the $SU$-rank of a tuple is $\omega\cdot n+k$, then the \emph{large dimension} $n$ corresponds to the coarse pseudofinite dimension with respect to $M$, while the number $k$ corresponds to the coarse dimension with respect to $H(M)$.  
Finally in Section \ref{section:measure} we introduce measures for formulas and prove they are uniformly definable in terms of their parameters.

\section{Preliminaries: $H$-structures of geometric theories}\label{section:preliminaries}

In this section, we review the notions of \emph{$H$-structures} of geometric theories from \cite{BeVa},\cite{BeVa2}  and their basic properties.

Recall that a theory $T$ in a language $\mathcal{L}$ is called \emph{geometric} if (1) it eliminates the quantifier $\exists^{\infty}$ and (2) in all models of $T$, the algebraic closure  satisfies the exchange property. Whenever $T$ is geometric and $M\models T$, 
 $\bar{a}$ a finite tuple in $M$ and $B, C\subset M$ we write $\dim(\bar{a}/B)$ for the length of a maximal subtuple of $\bar{a}$ that is algebraically independent over $B$, and we write $\bar{a} \ind_B C$ when $\dim(\bar{a}/B\cup C)=\dim(\bar{a}/B)$. If $A\subset M$, we write $A \ind_BC$ when $\bar{a} \ind_BC$ for all finite tuples $\bar{a}$ in $A$. We shall use the word \emph{independence} to mean algebraic independence inside models of the theory $T$.


\begin{definition} \label{hstruc}
Given a complete geometric theory $T$ in a language $\mathcal{L}$ and a model $M\models T$, let $H$ be a new unary predicate symbol and let $\mathcal{L}_H:=\mathcal{L}\cup \{ H \}$ be the extended language.
 Let $(M, H(M))$ denote an expansion of $M$ to $\mathcal{L}_H$, where $H(M): = \{ a\in M \mid H(a) \}$.
\begin{enumerate}
\item The pair $(M, H(M))$ has the \emph{density property} if for any $A\subseteq M$ of finite dimension and for any non-algebraic $\mathcal{L}$-type  $p(x)\in S_1(A)$,  $p(x)$ has a realization in $H(M)$. 
\item The pair $(M, H(M))$ has the \emph{extension property} if for any $A\subseteq M$ of finite dimension and for any non-algebraic $\mathcal{L}$-type $p(x)\in S_1(A)$, $p(x)$ has realizations in $M\setminus \acl(AH(M))$. 
\item An expansion $(M, H(M))$ which satisfies the density and the extension property is called  an \emph{$H$-structure} if, in addition, $H(M)$ is  an $\mathcal{L}$-algebraically independent subset of $M$.
\end{enumerate}
\end{definition}

It is easy to show by induction on the number of variables that the density property and the extension property also hold for types of independent tuples.

\begin{lemma} [\cite{BeVa2}, Lemma 2.3]
Let $T$ be a complete geometric theory and let $(M, H(M))$ be an expansion satisfying the density property and the extension property. Then whenever $A\subset M$ has a finite dimension and $p(x_1,\dots,x_n)\in S_n(A)$ 
is an $\mathcal{L}$-type of dimension $n$, $p(x_1,\dots,x_n)$ has realizations both in $H(M)^n$ and in 
$(M\setminus \acl(AH(M))^n$. Furthermore, we can choose $\bar b\in (M\setminus \acl(AH(M)))^n$ realizing $p(x_1,\dots,x_n)$ with $\dim(\bar b/\acl(AH(M)))=n$.
\end{lemma}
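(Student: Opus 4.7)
The plan is a straightforward induction on $n$. The base case $n=1$ is essentially the density property (for realizations in $H(M)$) and the extension property (for realizations in $M\setminus\acl(AH(M))$) stated in Definition \ref{hstruc}, together with the observation that any element $b\in M\setminus\acl(AH(M))$ realizing a non-algebraic type automatically satisfies $\dim(b/\acl(AH(M)))=1$.

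For the inductive step, I would first note that if $p(x_1,\dots,x_n)$ has dimension $n$ over $A$, then the restriction $p\!\!\upharpoonright\!(x_1,\dots,x_{n-1})$ has dimension $n-1$ over $A$, since algebraic independence of a tuple passes to its subtuples. By the induction hypothesis, pick $(b_1,\dots,b_{n-1})$ either in $H(M)^{n-1}$ or in $(M\setminus\acl(AH(M)))^{n-1}$ of the appropriate dimension, realizing this restriction. Let $q(x_n)$ denote the type of the last coordinate read off from $p$ after substituting these values; since $p$ has full dimension $n$, $q$ is a non-algebraic $\mathcal{L}$-type over the parameter set $A\cup\{b_1,\dots,b_{n-1}\}$, which still has finite dimension (bounded by $\dim(A)+n-1$), so density and extension are applicable to $q$.

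For the $H(M)^n$ conclusion, apply the density property to $q$ over $A\cup\{b_1,\dots,b_{n-1}\}$ to obtain $b_n\in H(M)$, so that $(b_1,\dots,b_n)\in H(M)^n$ realizes $p$. For the second conclusion, apply the extension property to $q$ to obtain $b_n \in M\setminus\acl(Ab_1\cdots b_{n-1}H(M))$ realizing $q$. To verify the dimension clause, combine the inductive bound $\dim(b_1,\dots,b_{n-1}/\acl(AH(M)))=n-1$ with the fact that $b_n\notin\acl(\acl(AH(M))\cup\{b_1,\dots,b_{n-1}\})$; the exchange property for $\acl$ then gives $\dim(b_1,\dots,b_n/\acl(AH(M)))=n$.

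No step is a genuine obstacle: the argument is essentially bookkeeping around the pregeometry. The only mildly delicate point is confirming at each stage that the parameter set enriched by the previously chosen coordinates still has finite dimension, which is automatic from the inductive choice. The argument uses only the definitions given in the excerpt and the exchange property inherent in any geometric theory.
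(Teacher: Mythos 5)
Your proof is correct and follows exactly the route the paper intends: the paper leaves this lemma unproved, remarking only that it "is easy to show by induction on the number of variables," and your argument is precisely that induction, using density/extension over the finite-dimensional set $A\cup\{b_1,\dots,b_{n-1}\}$ at each step. The only point worth spelling out (which your appeal to "full dimension $n$" covers) is that the substituted fiber type over $Ab_1\cdots b_{n-1}$ is non-algebraic because an algebraicity bound $\exists^{\leq k}x_n\,\varphi(\bar x,x_n)$ would lie in the complete type $p\!\upharpoonright\!(x_1,\dots,x_{n-1})$ and contradict $\dim(p)=n$.
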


We will also use the expressions \emph{density property} and \emph{extension property} for the generalized versions of these properties. Although the class of $H$-structures is not a first order class, it corresponds to the class of sufficiently saturated models of a complete theory: 

\begin{theorem}[\cite{BeVa}, \cite{BeVa2}]
Given any geometric complete theory $T$, all the $H$-structures associated with $T$ are elementarily equivalent to one another. Furthermore, any $|T|^+$-saturated model of the common theory of $H$-structures is again an $H$-structure.
\end{theorem}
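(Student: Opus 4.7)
The plan is to prove elementary equivalence via a back-and-forth between two $|T|^+$-saturated $H$-structures, and to verify the ``furthermore'' clause by expressing the defining conditions of an $H$-structure as first-order schemata in $\mathcal{L}_H$. The central observation is that the $\mathcal{L}_H$-type of a finite tuple $\bar a$ in an $H$-structure $(M,H(M))$ is determined by the $\mathcal{L}$-type of the enlarged tuple $(\bar a,\bar h)$, where $\bar h$ enumerates $H(M)\cap\acl_{\mathcal{L}}(\bar a)$; this is a finite set of size at most $\dim(\bar a)$, since $H(M)$ is $\mathcal{L}$-algebraically independent and $\acl_{\mathcal{L}}(\bar a)$ is finite-dimensional.

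For the back-and-forth between $|T|^+$-saturated $H$-structures $(M_1,H_1)$ and $(M_2,H_2)$, I would consider partial $\mathcal{L}$-elementary maps $f:A\to A'$ where $A=\acl_{\mathcal{L}}(A)$ has finite dimension and $f$ sends $A\cap H_1$ bijectively onto $A'\cap H_2$. To extend such $f$ by a new element $c\in M_1$, I split into cases. If $c\in A$, the matching is trivial. If $c\in H_1\setminus A$, then use the density property in $M_2$ to realize $\tp_{\mathcal{L}}(c/A')$ by some $c'\in H_2$; an exchange argument shows that no element of $H_2\setminus A'$ is forced into $\acl_{\mathcal{L}}(A'\cup\{c'\})$. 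If $c\notin\acl_{\mathcal{L}}(A\cup H_1)$, the extension property in $M_2$ produces $c'\notin\acl_{\mathcal{L}}(A'\cup H_2)$ with the correct $\mathcal{L}$-type. The delicate case is $c\in\acl_{\mathcal{L}}(A\cup H_1)\setminus A$: here I first enrich the configuration by a minimal tuple from $H_1\setminus A$ over which $c$ becomes algebraic, inductively matching each new $H_1$-element as in the previous case, which reduces the extension of $c$ itself to the trivial case. A symmetric argument in the opposite direction yields an $\mathcal{L}_H$-isomorphism between any two $|T|^+$-saturated $H$-structures of cardinality $|T|^+$, hence elementary equivalence.

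For the ``furthermore'' clause, I would express each of the three defining conditions as an $\mathcal{L}_H$-schema. Elimination of $\exists^\infty$ provides, for each $\mathcal{L}$-formula $\varphi(x,\bar y)$, a uniform bound $k_\varphi$ such that $\varphi(M,\bar b)$ is infinite if and only if $|\varphi(M,\bar b)|>k_\varphi$, so ``$\varphi(x,\bar b)$ is non-algebraic'' is first-order expressible. Then $\mathcal{L}$-algebraic independence of $H$, density, and extension all become axiom schemata in $\mathcal{L}_H$ that hold in every $H$-structure, hence in every model of the common theory; $|T|^+$-saturation then upgrades these schemata from single-formula statements to the required realization of non-algebraic types over finite-dimensional parameter sets, as in Definition \ref{hstruc}.

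The main obstacle is the delicate extension case in the back-and-forth, where the base must be enriched to absorb new $H$-elements while preserving the $\mathcal{L}$-algebraic independence of the $H$-part of the configuration. Careful bookkeeping and the exchange property, together with the generalized density/extension statement for finite tuples recorded in the preceding lemma, are essential to carry this out cleanly.
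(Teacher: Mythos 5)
Your ``central observation'' is false as stated, and the error propagates into a genuine gap in the back-and-forth. The $\mathcal{L}_H$-type of a tuple $\bar a$ is \emph{not} determined by the $\mathcal{L}$-type of $(\bar a,\bar h)$ where $\bar h$ enumerates $H(M)\cap\acl_{\mathcal{L}}(\bar a)$; the correct enlargement is by the $H$-basis $\operatorname{HB}(\bar a)=H(M)\cap\acl_{\mathcal{L}_H}(\bar a)$, and the tuple must moreover be $H$-independent (this is Lemma \ref{fundlem1}). Concretely, in the $H$-structure of an infinite vector space over a finite field let $a_1$ be generic with $a_1\notin\acl_{\mathcal{L}}(H)$ and let $a_2=h_1+h_2$ with $h_1,h_2\in H$ distinct: then $\acl_{\mathcal{L}}(a_1)\cap H=\acl_{\mathcal{L}}(a_2)\cap H=\emptyset$, the enlarged tuples have the same $\mathcal{L}$-type, yet the $\mathcal{L}_H$-formula $\exists z_1\in H\,\exists z_2\in H\,(x=z_1+z_2)$ separates $a_2$ from $a_1$. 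For the same reason your back-and-forth invariant ($A=\acl_{\mathcal{L}}(A)$ finite-dimensional, $f$ an $\mathcal{L}$-elementary map with $f(A\cap H_1)=A'\cap H_2$) is too weak: it does not record which elements of $A$ lie in the small closure, so maps in your system need not preserve $\mathcal{L}_H$-formulas, and the extension step can destroy the matching of $H$-parts. Take $A=\acl_{\mathcal{L}}(c)$ with $c=h_1+h_2$ as above, so $A\cap H_1=\emptyset$, and let $c'=f(c)$ be a generic point of $M_2$ that is not a sum of two elements of $H_2$ (nothing in your invariant rules this out). Adjoining $h_1$ on the left forces $h_2=c-h_1$ into $\acl_{\mathcal{L}}(A h_1)\cap H_1$, while for any $h'\in H_2$ produced by density on the right, $\acl_{\mathcal{L}}(A'h')\cap H_2$ contains no counterpart of $h_2$; the map sends $h_2$ to $c'-h'\notin H_2$ and leaves the system. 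This also refutes your claim that ``an exchange argument shows that no element of $H_2\setminus A'$ is forced into $\acl_{\mathcal{L}}(A'\cup\{c'\})$'': that exchange argument is valid only when the configuration is $H$-independent, which your invariant does not guarantee.

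The repair is the invariant of \cite{BeVa}, \cite{BeVa2}, recorded in this paper as Lemma \ref{fundlem1} together with Proposition \ref{basicprop}: the system consists of maps between finite \emph{$H$-independent} tuples identifying $\bar a H(\bar a)$ with $\bar b H(\bar b)$ as $\mathcal{L}$-types. A new element $c$ is handled by first adjoining an enumeration of $\operatorname{HB}(\bar a c/\bar a)$, whose partners are found in $H_2$ by the generalized (multi-variable) density property; $H$-independence is preserved when adjoining $H$-points (Proposition \ref{basicprop}(2)), and this is exactly what makes the exchange step you invoke legitimate. Then $c$ itself is either algebraic over the enlarged tuple (trivial to match) or is matched using the extension property so that $H$-independence persists on both sides. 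Note also that saturation of the two $H$-structures is neither available in general nor needed: the density and extension properties already supply all realizations of non-algebraic types over finite-dimensional sets, and what must be proved is elementary equivalence of \emph{arbitrary} $H$-structures, not just saturated ones. Your treatment of the ``furthermore'' clause follows the standard route, but the extension property must be written as a schema with the $H$-quantifier bounded inside the formula (for each $\varphi(x,\bar y)$ and each $\psi(x,\bar y,\bar z)$ algebraic in $x$, an axiom producing a realization of $\varphi(x,\bar b)$ avoiding $\exists\bar z\in H\,\psi(x,\bar b,\bar z)$), so that the relevant partial type has only $|T|$-many parameters and $|T|^+$-saturation in $\mathcal{L}_H$ can be applied; realizing a type over the infinite set $A\cup H(M)$ directly is not an instance of $|T|^+$-saturation.
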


\begin{notation}
We will write $T^{ind}$ to denote the common complete $\Le_H$-theory of $H$-structures associated with $T$. Whenever $(M,H(M))\models T^{ind}$, and $\bar{a}$ is a tuple in $M$, we write $\tp_{\mathcal{L}}(\bar a)$ for the type in the
language $\mathcal{L}$ and $\tp_{\mathcal{L}_H}(\bar a)$ for the type in the language $\mathcal{L}_H$. Also, for $A\subset M$, we
write $H(A)$ for $A\cap H(M)$. We will also write $H(\bar{a})$ for the subtuple of $\bar{a}$ consisting of those elements that also belong to $H(M)$.

\end{notation}

\begin{definition} Let $(M, H(M))\models T^{ind}$.
A subset $A \subset M$ is called \emph{$H$-independent} if $A\ind_{H(A)}H(M)$.
\end{definition}

\begin{lemma}[\cite{BeVa}, \cite{BeVa2}] \label{fundlem1}
Let $(M, H(M))\models T^{ind}$. Then for any $H$-independent tuples $\bar{a}$ and $\bar{b}$, 
\[ \tp_{\Le_H}(\bar{a}) = \tp_{\Le_H}(\bar{b}) \text{ if and only if } \tp_{\mathcal{L}}(\bar{a} H(\bar{a})) = \tp_{\mathcal{L}}(\bar{b} H(\bar{b}))
\]
\end{lemma}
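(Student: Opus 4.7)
The plan is to treat the two directions of the equivalence separately; the forward implication is bookkeeping, and the substantive content is the converse.

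For $(\Rightarrow)$, the $\mathcal{L}_H$-type $\tp_{\mathcal{L}_H}(\bar a)$ already records, through the predicate $H$, which coordinates of $\bar a$ lie in $H$, so $H(\bar a)$ is a definite subtuple of $\bar a$ determined by $\tp_{\mathcal{L}_H}(\bar a)$, and similarly on the $\bar b$ side. Since every $\mathcal{L}$-formula is in particular an $\mathcal{L}_H$-formula, the equality $\tp_{\mathcal{L}_H}(\bar a)=\tp_{\mathcal{L}_H}(\bar b)$ restricts to $\tp_{\mathcal{L}}(\bar a H(\bar a))=\tp_{\mathcal{L}}(\bar b H(\bar b))$.

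For $(\Leftarrow)$, I would work in a sufficiently saturated $(M,H(M))\models T^{ind}$ (which is again an $H$-structure by the preceding theorem) and build an $\mathcal{L}_H$-automorphism sending $\bar a$ to $\bar b$ by back-and-forth, starting from the partial $\mathcal{L}$-elementary map $f_{0}\colon \bar a H(\bar a)\mapsto \bar b H(\bar b)$. The inductive invariants on the approximations $f_\alpha\colon A\to B$ are that $f_\alpha$ is $\mathcal{L}$-elementary, preserves $H$ coordinate-wise (i.e.\ $f_\alpha(A\cap H(M))=B\cap H(M)$), and that both $A$ and $B$ are $H$-independent. To absorb a new element $c$ into the domain I would split into three cases according to the position of $c$ relative to $\acl(AH(M))$. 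If $c\in H(M)$, the density property (in its generalized form from the previous lemma, ensuring independence from $H(M)$ over $H(A)\cup\{c\}$) produces a matching $d\in H(M)$ with the same $\mathcal{L}$-type over $B$. If $c\notin H(M)$ but $c\in\acl(AH(M))$, pick a finite $\bar h\subseteq H(M)$ independent from $A$ over $H(A)$ with $c\in\acl(A\bar h)$, extend $f_\alpha$ first to $\bar h$ by the density property, and then let $d$ be the unique $\mathcal{L}$-counterpart of $c$ over $B\bar h'$ where $\bar h'=f_\alpha(\bar h)$. If $c\notin\acl(AH(M))$, the extension property produces $d\notin\acl(BH(M))$ realizing the same $\mathcal{L}$-type over $B$, which in particular lies outside $H(M)$.

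The core obstacle is preserving the $H$-independence invariant at each step; this is what forces the delicate choice in each case. In case (3) it is automatic because $d\notin\acl(BH(M))$ gives, by transitivity of $\ind$, that $B\cup\{d\}\ind_{H(B)}H(M)$. Case (1) reduces to choosing a density witness that is also independent from $A$ over $H(A)$, which is supplied by the generalized density/extension statement for independent tuples already recorded above. Case (2) is the trickiest: one has to verify both that $B\bar h'$ stays $H$-independent after the density extension by $\bar h'$, and that the algebraic counterpart $d$ of $c$ over $B\bar h'$ cannot land in $H(M)$. The former follows from choosing $\bar h$ independent from $A$ over $H(A)$ in $H(M)$ and transferring that independence to the image side via $\mathcal{L}$-elementarity together with transitivity of $\ind$; the latter is then a consequence of the $H$-independence of $B\bar h'\cup\{d\}$, since if $d$ lay in $H(M)\setminus\bar h'$ it would force $c$ to lie in $H(M)\setminus\bar h$ on the source side, contradicting the assumption of case (2).
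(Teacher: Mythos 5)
Your strategy is the standard one (and, as far as one can tell, the one behind the cited proofs in \cite{BeVa}, \cite{BeVa2}, since the present paper only quotes the lemma): pass to a sufficiently saturated model and run a back-and-forth on partial $\mathcal{L}$-elementary maps that send $H$-points to $H$-points and have $H$-independent domain and range, using density for elements of $H$ or of $\scl$ of the domain, and the extension property otherwise. The forward direction and your case (3) are fine. But the two places where the invariant is actually in danger are not proved by what you wrote. In case (1) you invoke density for $\tp_{\mathcal{L}}(c/A)$ without checking non-algebraicity; one needs the small observation that if $c\in H(M)\cap\acl_{\mathcal{L}}(A)$ then $H$-independence of $A$ gives $c\in\acl_{\mathcal{L}}(H(A))$, and algebraic independence of $H(M)$ then gives $c\in H(A)\subseteq A$, so this situation does not occur for a genuinely new element. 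More seriously, in case (2) your justification that the algebraic counterpart $d$ cannot lie in $H(M)$ is circular: $H$-independence of $B\bar{h}'\cup\{d\}$ would hold even if $d\in H(M)$ (adding points of $H$ never destroys $H$-independence, item (2) of Proposition \ref{basicprop}), and nothing ``forces $c$ to lie in $H(M)$'' --- membership in $H$ is precisely what an $\mathcal{L}$-type cannot see, which is the whole content of the lemma. As written, that step is asserted, not proved.

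The conclusion is nevertheless true, and the repair uses $H$-independence of $B\bar{h}'$ (not of $B\bar{h}'d$) together with the algebraic independence of $H(M)$: since $d\in\acl_{\mathcal{L}}(B\bar{h}')$, if $d\in H(M)$ then $B\bar{h}'\ind_{H(B)\bar{h}'}H(M)$ yields $d\in\acl_{\mathcal{L}}(H(B)\bar{h}')$, and independence of $H(M)$ forces $d\in H(B)\bar{h}'$; so $d$ is the image of some $e\in A\bar{h}$ under the current map, and $\mathcal{L}$-elementarity (pull back the formula $x=e$) gives $c=e\in A\bar{h}$, contradicting that $c$ is a new element with $c\notin H(M)$. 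Two smaller points: ``the unique $\mathcal{L}$-counterpart of $c$'' should be ``some realization of the (algebraic) image type'', since $c$ is only algebraic, not definable, over $A\bar{h}$; and since the lemma is stated for an arbitrary model of $T^{ind}$ while your argument runs in a saturated one, you should note that $H$-independence of $\bar{a}$ and $\bar{b}$ is preserved when passing to a saturated elementary extension (a dimension drop against new points of $H$ would be witnessed by an existential $H$-formula, using elimination of $\exists^{\infty}$, and would reflect back to the original model).
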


\begin{lemma}[\cite{BeVa}, \cite{BeVa2}] \label{fundlem2}
Any formula in $T^{ind}$ is equivalent
to a boolean combination of formulas of the form $\exists y_1\in H\cdots \exists y_k\in H \varphi(\bar{x},\bar{y})$, where $\varphi(\bar{x},\bar{y})$ is an $\mathcal{L}$-formula. 

Furthermore, definable subsets of
$H(M)^n$ are just traces of $\mathcal{L}$-formulas in $H(M)^n$: if $(M, H(M))\models T^{ind}$ and $X\subset H(M)^n$ is $\mathcal{L}_H$-definable, there is an $\mathcal{L}$-formula $\varphi(\bar{x},\bar{y})$ 
and there is  $\bar c\in M$ such that $X=\varphi(H(M)^n,\bar c)$.
\end{lemma}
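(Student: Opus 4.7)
The plan is to reduce both parts of the lemma to Lemma \ref{fundlem1} via a compactness argument. Let $\Delta$ denote the class of formulas of the form $\exists y_1\in H\cdots\exists y_k\in H\,\varphi(\bar x,\bar y)$ with $\varphi\in\mathcal{L}$, and let $\mathcal{B}(\Delta)$ be its boolean closure. By a standard type-counting argument it suffices, for part 1, to show that in a sufficiently saturated $(M,H(M))\models T^{ind}$ the $\mathcal{B}(\Delta)$-type of a tuple determines its full $\mathcal{L}_H$-type; dualising via the Stone spaces then yields that every $\mathcal{L}_H$-formula is equivalent to a $\mathcal{B}(\Delta)$-formula.

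Given tuples $\bar a,\bar b\in M$ with the same $\mathcal{B}(\Delta)$-type, I would first extend $\bar a$ to an $H$-independent tuple by adjoining an $H$-basis: since $H(M)$ is algebraically independent and $\dim(\bar a/H(M))\leq|\bar a|$ is finite, there is a finite $\bar e\in H(M)^k$ of minimal length with $\bar a\ind_{\bar e H(\bar a)}H(M)$, so $\bar a\bar e$ is $H$-independent. Next, I would produce $\bar f\in H(M)^k$ realizing $\tp_\mathcal{L}(\bar a\bar e H(\bar a))$ over $\bar b H(\bar b)$ by solving the partial type
\[
\Sigma(\bar y)=\{H(y_i):i\le k\}\cup\{\varphi(\bar b,\bar y,H(\bar b)):\varphi(\bar x,\bar y,\bar w)\in\tp_\mathcal{L}(\bar a,\bar e,H(\bar a))\}.
\]
Finite satisfiability follows from the $\mathcal{B}(\Delta)$-hypothesis, since every finite conjunct is a $\Delta$-formula on $\bar a$ witnessed by $\bar e$ and so holds of $\bar b$ as well, and saturation then yields a realization $\bar f$. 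Applying Lemma \ref{fundlem1} to the $H$-independent tuples $\bar a\bar e$ and $\bar b\bar f$ gives $\tp_{\mathcal{L}_H}(\bar a\bar e)=\tp_{\mathcal{L}_H}(\bar b\bar f)$, hence $\tp_{\mathcal{L}_H}(\bar a)=\tp_{\mathcal{L}_H}(\bar b)$, provided one verifies that $\bar b\bar f$ is again $H$-independent.

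For part 2, every tuple $\bar x\in H(M)^n$ is trivially $H$-independent with $H$-part equal to itself, so by Lemma \ref{fundlem1} its $\mathcal{L}_H$-type is determined by its $\mathcal{L}$-type. Given an $\mathcal{L}_H$-definable $X\subseteq H^n$ with parameters $\bar c\in M$, I would choose an $H$-basis $\bar e$ of $\bar c$ and set $\bar c'=\bar c\bar e H(\bar c)$; for every $\bar x\in H^n$ the concatenation $\bar x\bar c'$ is $H$-independent, so by Lemma \ref{fundlem1} membership $\bar x\in X$ depends only on $\tp_\mathcal{L}(\bar x/\bar c')$. Compactness then produces an $\mathcal{L}$-formula $\varphi$ with $X=\varphi(H^n,\bar c')$.

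The main obstacle is verifying that the $\bar f$ constructed above really gives an $H$-independent $\bar b\bar f$. The condition $\bar b\ind_{\bar f H(\bar b)}H(M)$ mentions the entire set $H(M)$ and so is neither visible from $\tp_\mathcal{L}(\bar b\bar f H(\bar b))$ alone nor expressible as a single $\Delta$-formula on $\bar b$, since the natural formulation nests an $\exists\bar z\in H$ inside an $\exists\bar y\in H$. The workaround is to split the content into two transferable pieces: the minimality of $\bar e$ as an $H$-basis is an $\mathcal{L}$-condition on $\bar a\bar e H(\bar a)$ available through elimination of $\exists^\infty$, and so is automatically inherited by $\bar f$ after incorporating it into $\Sigma$, while the absence of a further dimension-dropping element of $H$ is a $\Delta$-consequence of the $\mathcal{B}(\Delta)$-type of $\bar a$ which transfers directly to $\bar b$ and constrains $\bar f$. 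Together these two ingredients force $\bar b\bar f$ to be $H$-independent, closing the argument.
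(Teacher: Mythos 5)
The paper does not actually prove this lemma — it is quoted from \cite{BeVa} and \cite{BeVa2} — and your argument follows essentially the same route as the proofs there: reduce everything to Lemma \ref{fundlem1} by showing that, in a saturated model, the type in boolean combinations of existential $H$-formulas determines the full $\mathcal{L}_H$-type, the point being to transfer an $H$-basis of $\bar a$ to a tuple $\bar f\in H^k$ over $\bar b$ by saturation. Your handling of the one genuine difficulty (that $\bar b\bar f$ is again $H$-independent) also works, and can be stated more directly than in your last paragraph: since $\dim(\bar b/\bar f H(\bar b))=\dim(\bar a/\bar e H(\bar a))=\dim(\bar a/H(M))=:d$ is part of the transferred $\mathcal{L}$-type, and any drop of $\dim(\bar b/H(M))$ below $d$ would be witnessed by a single $\Delta$-formula $\exists\bar z\in H^m(\theta(\bar x,\bar z)\wedge\delta(\bar z))$ — where elimination of $\exists^{\infty}$ makes ``$\dim_{\bar x}\theta(\bar x,\bar z)<d$'' an $\mathcal{L}$-definable condition $\delta(\bar z)$ — whose negation holds of $\bar a$ and hence of $\bar b$, one gets $\dim(\bar b/H(M))\geq d$ and thus $H$-independence of $\bar b\bar f$; the minimality-of-$\bar e$ ingredient you invoke is not needed for this.
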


\begin{definition}\label{def-small} Let $(M,H(M))\models T^{ind}$ be sufficiently saturated.
For any subset $A\subset M$, the set
\[ \scl(A):= \acl(A\cup H(M))
\]
is called the \emph{small closure} of $A$. Any subset $B\subset \scl(A)$ is called \emph{$A$-small}. A
definable set $X$ is \emph{small} if there is a finite set $A$ such that $X$ is $A$-small. For any finite tuple
$\bar{b}$ and any set $A$, we write $\ldim(\bar{b}/A)=\dim(\bar{b}/AH(M))$, and it is called the \emph{large
dimension of the tuple $\bar{b}$ over $A$} (see \cite{BeVa2}). It corresponds to the dimension
of the tuple $\bar{b}$ with respect to $A$ in the pregeometry localized at $H(M)$.
\end{definition}

The following properties and definitions come from \cite{BeVa}, \cite{BeVa2}; and they follow easily from the definitions.

\begin{proposition}\label{basicprop}
Let $(M, H(M))\models T^{ind}$.
\begin{enumerate}
\item For any finite tuple $\bar{a}$, there exists some finite tuple $\bar{h}$ in $H(M)$ such that $\bar{a}\ind_{\bar{h}}H(M)$, and for such $\bar{h}$, $\bar{a}\bar{h}$ is $H$-independent.
\item If $\bar{a}$ is any $H$-independent tuple, then for any finite tuple $\bar{h}$ in $H(M)$, 
$\bar{a}\bar{h}$ is also $H$-independent. 
\item If $(M,H(M))$ is an $H$-structure, $\bar{a}$ is a tuple and $A\subset M$ is $H$-independent, then there is a \emph{minimal} $\bar{h}$ in $H(M)$ such that $\bar{a}\ind_{A\bar{h}}H(M)$. This tuple is unique up to permutation, we call it the \emph{$H$-basis of $\bar{a}$ over $A$} and it is denoted by $\operatorname{HB}(\bar{a}/A)$.

\item Assume $(M, H(M))\models T^{ind}$. Then for any tuple $\bar{b}$ and for any finite tuple $\bar{h}$ in 
$H(M)$, we have $\operatorname{HB}(\bar{b}\bar{h})=\operatorname{HB}(\bar{b})\bar{h}$.
\item For any tuple $\bar{a}$, $\acl_{\mathcal{L}_H}(\bar{a})=
\acl(\bar{a}\operatorname{HB}(\bar{a}))$.
\end{enumerate}
\end{proposition}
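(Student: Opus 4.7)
The plan is to dispatch the four items in order, relying throughout on monotonicity and exchange for $\ind$ in the pregeometry $(M,\acl)$, and, for the subtle part, on algebraic independence of $H(M)$. Items (1), (2) and (4) are essentially bookkeeping; the real content of the proposition lies in the uniqueness clause of (3), which is the only place the distinguishing feature of $H$-structures (as opposed to lovely pairs) is used.

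For (1), since $T$ is geometric, $\dim(\bar a/H(M))\le|\bar a|$ is finite, so by finite character of $\acl$ there exists a finite $\bar h\subseteq H(M)$ realising this dimension, i.e.\ $\bar a\ind_{\bar h}H(M)$. To upgrade to $H$-independence of $\bar a\bar h$, observe $H(\bar a\bar h)\subseteq \bar h\cup H(M)$ and use monotonicity of $\ind$ (adjoining elements of $H(M)$ to the base) to promote $\bar a\bar h\ind_{\bar h}H(M)$ to $\bar a\bar h\ind_{H(\bar a\bar h)}H(M)$. Item (2) is the same trick: from $\bar a\ind_{H(\bar a)}H(M)$ monotonicity gives $\bar a\ind_{H(\bar a)\bar h}H(M)$, and since $H(\bar a\bar h)=H(\bar a)\cup\bar h$ this is exactly $H$-independence of $\bar a\bar h$.

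For (3), I read the displayed condition $\bar a\bar h\ind_A H(M)$ as $\bar a\ind_{A\bar h}H(M)$, the natural relativisation of (1) and what the term \emph{$H$-basis} demands. Existence of a minimum-cardinality such $\bar h$ is immediate from (1) applied over $A$. For uniqueness, suppose $\bar h_1,\bar h_2$ are two minima and, for a contradiction, pick $h\in\bar h_1\setminus\bar h_2$. By minimality of $\bar h_1$ and exchange, $h\in\acl(A\bar a(\bar h_1\setminus\{h\}))$; combined with $\bar a\ind_{A\bar h_2}H(M)$ (and a short dimension count) this upgrades to $h\in\acl(A\bar h_2(\bar h_1\setminus\{h\}))$. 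Now $B:=A\bar h_2(\bar h_1\setminus\{h\})$ is $H$-independent by (2), and for any $H$-independent set $B$ one has $\acl(B)\cap H(M)=H(B)$ --- any $h^{\ast}\in H(M)\setminus H(B)$ is independent from $B$ over $H(B)$, hence not algebraic over $B$. This forces $h\in H(A)\cup\bar h_2\cup(\bar h_1\setminus\{h\})$, contradicting the choice of $h$ once one notes that a minimum-size $\bar h_i$ must be disjoint from $A$.

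For (4), write $\bar h_0=\operatorname{HB}(\bar b)$. The tuple $\bar h_0\bar h$ satisfies $\bar b\bar h\ind_{\bar h_0\bar h}H(M)$ by monotonicity, so by the uniqueness just proved it suffices to see that no strictly smaller candidate exists: any $\bar g\subseteq H(M)$ with $\bar b\bar h\ind_{\bar g}H(M)$ must contain $\bar h$ (since $\bar h\subseteq H(M)$ is algebraically independent), and must contain every element of $\bar h_0$ (by re-running the uniqueness argument of (3) with the same $\acl\cap H(M)=H(\cdot)$ identity), hence $\bar g\supseteq\bar h_0\cup\bar h$. The main obstacle throughout is therefore the uniqueness step in (3); once the identity $\acl(B)\cap H(M)=H(B)$ for $H$-independent $B$ is extracted, the rest of the proposition is mechanical.
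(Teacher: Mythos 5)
Your proof is correct, and it actually supplies more than the paper does: the paper gives no argument for this proposition, simply attributing it to \cite{BeVa}, \cite{BeVa2} with the remark that the properties ``follow easily from the definitions.'' Your route is the standard one from those references, and the details check out. In particular, you correctly repaired the statement of (3) (the displayed condition should indeed be read as $\bar a\ind_{A\bar h}H(M)$; the literal reading would force $\bar h\subseteq\acl(A)$ and trivialize the notion), and you correctly isolated where algebraic independence of $H(M)$ enters: the identity $\acl_{\mathcal L}(B)\cap H(M)=H(B)$ for $H$-independent $B$, which is exactly the mechanism behind Lemma \ref{lem-inHB} later in the paper, and which is what makes the $H$-basis unique for $H$-structures as opposed to lovely pairs. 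The dimension count in the uniqueness step (using $\bar a\ind_{A\bar h_2}H(M)$ to transfer $h\in\acl(A\bar a(\bar h_1\setminus\{h\}))$ to $h\in\acl(A\bar h_2(\bar h_1\setminus\{h\}))$) and the observation that a minimal tuple is disjoint from $A$ and has distinct entries are all as in the source. Two cosmetic caveats: in (4) the asserted equality should be read as an equality of sets (if $\bar h$ happens to meet $\operatorname{HB}(\bar b)$ the concatenation has repetitions, and the minimal tuple is the union), and your argument there correctly only uses algebraic independence of $H(M)$, which is first-order and hence available in every model of $T^{ind}$, matching the weaker hypothesis of item (4).
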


\bc \label{interalg-a1a2} Let $(M,H(M))\models T^{ind}$ and $\bar{a}=\bar{a}_1\bar{a}_2$ be  tuple such that $\bar{a}_1$ is independent over $H(M)$ and $\bar{a}_2\in \acl(\bar{a}_1H(M))$. Then, $\bar{a}$ is $\Le_H$-interalgebraic with $\bar{a}_1\operatorname{HB}(\bar{a})$. 
\ec
\bdem
By Proposition \ref{basicprop} (5), $\operatorname{HB}(\bar{a})\subseteq \acl_{\Le_H}(\bar{a})$, so $\bar{a}_1\operatorname{HB}(\bar{a})\subseteq \acl_{\Le_H}(\bar{a}_1\bar{a}_2)=\acl_{\Le_H}(\bar{a})$. Using Proposition \ref{basicprop} (3) we have   $\bar{a}\ind_{\operatorname{HB}(\bar{a})}H(M)$ and since $\bar{a}_2$ belongs to $\acl(\bar{a}_1H(M))$ we obtain $\bar{a}_2\in \acl(\bar{a}_1\operatorname{HB}(\bar{a}))$, as desired.
\edem

\bc \label{triviality-algclosure-H} Let $(M,H(M))\models T^{ind}$ and $\bar{d}$ be a tuple from $M$. If $h_1,\ldots,h_n$ are elements in $H(M)$ then $H\cap \acl_{\Le_H}(h_1,\ldots,h_n,\bar{d})=\{h_1,\ldots,h_n\}\cup \operatorname{HB}(\bar{d})$.
\ec
\bdem By part (5) of Proposition \ref{basicprop} we have the right to left inclusion. Assume now that $h\in H(M)\cap \acl_{\Le_H}(h_1,\ldots,h_n,\bar{d})$. Using part (5) of Proposition \ref{basicprop}, we have $h\in\acl(h_1,\ldots,h_n,\bar{d},\operatorname{HB}(h_1,\ldots,h_n,\bar{d}))$, which by part (4) is equal to $\acl(h_1,\ldots,h_n,\bar{d},\operatorname{HB}(\bar{d}))$.

By definition of the $H$-basis, we have $\bar{d}\ind_{\operatorname{HB}(\bar{d})}H(M)$, and using transitivity and symmetry we obtain $h\ind_{\operatorname{HB}(\bar{d})h_1,\ldots,h_n}\bar{d}$.  So, we must have $h\in \acl(h_1,\ldots,h_n,\operatorname{HB}(\bar{d}))$, and since the elements in $H(M)$ are all $\Le$-algebraically independent, we conclude that $h\in \{h_1,\ldots,h_n\}\cup \operatorname{HB}(\bar{d})$.
\edem


Note that by part (5) of Proposition \ref{basicprop}, whenever  $(M, H(M))\models T^{ind}$ and $A\subset M$, $\acl_{\mathcal{L}_H}(A)$ is always $H$-independent. So if we are given tuples
$\bar{a}$, $\bar{b}$, the set $\operatorname{HB}(\bar{b}/\acl_{\mathcal{L}_H}(\bar{a}))$ is well defined. This observation can be used to prove the following additivity property:

\begin{proposition}[Additivity of H-basis - see Proposition 3.1 in \cite{Carmona}]\label{additivityHB}
Let $(M, H(M))\models T^{ind}$ and let $\bar{a}$, $\bar{b}$ be tuples. Then 
$\operatorname{HB}(\bar{a}\bar{b})=\operatorname{HB}(\bar a)\cup \operatorname{HB}(\bar b/\acl_{\mathcal{L}_H}(\bar{a}))$.
\end{proposition}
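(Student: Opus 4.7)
Write $\bar h_1 := \operatorname{HB}(\bar a)$ and $\bar h_2 := \operatorname{HB}(\bar b/\acl_{\mathcal{L}_H}(\bar a))$; by Proposition~\ref{algclosure} we have $\acl_{\mathcal{L}_H}(\bar a)=\acl_{\mathcal{L}}(\bar a\bar h_1)$, so the hypothesis on $\bar h_2$ reads $\bar b\ind_{\bar a\bar h_1\bar h_2}H(M)$. The plan is to show that $\bar h_1\bar h_2$ has both characterizing properties of $\operatorname{HB}(\bar a\bar b)$: it is a tuple from $H(M)$ over which $\bar a\bar b$ is independent from $H(M)$, and it has the minimal possible length. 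Uniqueness of $\operatorname{HB}$ up to permutation will then force $\operatorname{HB}(\bar a\bar b)=\bar h_1\bar h_2$ as sets.

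For the independence, I unfold what each $\operatorname{HB}$ gives in terms of $\dim$. From $\bar a\ind_{\bar h_1}H(M)$ one has $\dim(\bar a/\bar h_1)=\ldim(\bar a)$. Since $\bar h_2\subseteq H(M)$, the chain
\[
\ldim(\bar a)=\dim(\bar a/\bar h_1H(M))\le \dim(\bar a/\bar h_1\bar h_2)\le \dim(\bar a/\bar h_1)=\ldim(\bar a)
\]
forces $\dim(\bar a/\bar h_1\bar h_2)=\ldim(\bar a)$. From $\bar b\ind_{\bar a\bar h_1\bar h_2}H(M)$ one has $\dim(\bar b/\bar a\bar h_1\bar h_2)=\ldim(\bar b/\bar a)$. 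Adding and using additivity of $\dim$ and $\ldim$ over the pregeometry, $\dim(\bar a\bar b/\bar h_1\bar h_2)=\ldim(\bar a)+\ldim(\bar b/\bar a)=\ldim(\bar a\bar b)$, i.e.\ $\bar a\bar b\ind_{\bar h_1\bar h_2}H(M)$. By the minimality clause in Proposition~\ref{basicprop}(3) it follows that some permutation of $\operatorname{HB}(\bar a\bar b)$ is contained in $\bar h_1\bar h_2$.

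For the cardinality, the standard exchange argument behind Proposition~\ref{basicprop}(3) gives the formula $|\operatorname{HB}(\bar c/A)|=\dim(\bar c/A)-\ldim(\bar c/A)$ (minimality forces $\bar h\subseteq \acl_{\mathcal{L}}(A\bar c)$ and $\bar h$ $\mathcal{L}$-independent over $A$; then exchange yields the count). Applying this to the three HB's and using additivity,
\[
|\bar h_1|+|\bar h_2|=\bigl[\dim(\bar a)-\ldim(\bar a)\bigr]+\bigl[\dim(\bar b/\bar a)-\ldim(\bar b/\bar a)\bigr]=\dim(\bar a\bar b)-\ldim(\bar a\bar b)=|\operatorname{HB}(\bar a\bar b)|.
\]
Moreover the minimality of $\bar h_2$ forces $\bar h_2$ to be $\mathcal{L}$-independent over $\acl_{\mathcal{L}_H}(\bar a)\supseteq\bar h_1$, so $\bar h_1\cap\bar h_2=\emptyset$ and $|\bar h_1\bar h_2|=|\bar h_1|+|\bar h_2|=|\operatorname{HB}(\bar a\bar b)|$. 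Combined with the inclusion of the previous paragraph, this gives equality of sets.

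The main obstacle is the cardinality identity $|\operatorname{HB}(\bar c/A)|=\dim(\bar c/A)-\ldim(\bar c/A)$. It is implicit in the cited basic propositions but must be extracted via exchange: one has to verify both that a minimal $\bar h$ lies in $\acl_{\mathcal{L}}(A\bar c)$ and that its elements are $\mathcal{L}$-independent over $A$, and then count dimensions. Once that identity is in place, the proof reduces to routine bookkeeping in the pregeometries $\acl_{\mathcal{L}}$ and $\acl_{\mathcal{L}}(\,\cdot\,H(M))$.
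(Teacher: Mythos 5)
Your independence computation is fine: granting the intended reading of Proposition \ref{basicprop}(3) (that $\operatorname{HB}(\bar c/A)$ is a minimal tuple $\bar h$ from $H(M)$ with $\bar c\ind_{A\bar h}H(M)$), the chain of inequalities correctly yields $\bar a\bar b\ind_{\bar h_1\bar h_2}H(M)$. The fatal problem is the cardinality identity $|\operatorname{HB}(\bar c/A)|=\dim(\bar c/A)-\ldim(\bar c/A)$, which is false, and your parenthetical justification (``minimality forces $\bar h\subseteq\acl_{\mathcal{L}}(A\bar c)$'') is exactly where it breaks: minimality only gives $h_i\in\acl_{\mathcal{L}}(A\bar c\,(\bar h\setminus\{h_i\}))$ for each $i$, not $h_i\in\acl_{\mathcal{L}}(A\bar c)$. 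Concretely, in a vector-space $H$-structure take $c=h_1+h_2$ with $h_1,h_2\in H(M)$ distinct: then $\dim(c)=1$ and $\ldim(c)=0$, but $\operatorname{HB}(c)=\{h_1,h_2\}$ has size $2$, and $h_1\notin\acl_{\mathcal{L}}(c)$. This is not a pathology; it is precisely why $k=|\operatorname{HB}|$ appears as an extra coordinate in $\operatorname{SU}=\omega\cdot n+k$ in Proposition \ref{rankomega} rather than being determined by $\dim$ and $\ldim$ (a single element of $\acl_{\mathcal{L}}(H(M))$ can have an $H$-basis of any finite size). Only the inequality $|\operatorname{HB}(\bar c/A)|\geq\dim(\bar c/A)-\ldim(\bar c/A)$ holds, so your displayed count $|\bar h_1|+|\bar h_2|=|\operatorname{HB}(\bar a\bar b)|$ is unjustified and the argument does not close. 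There is also a smaller gap in the first half: from $\bar a\bar b\ind_{\bar h_1\bar h_2}H(M)$ and the minimality-plus-uniqueness clause alone you cannot yet conclude that $\operatorname{HB}(\bar a\bar b)$ is contained in $\bar h_1\bar h_2$; you need the (true) fact that the $H$-basis is contained in every tuple from $H(M)$ witnessing the independence, which requires an exchange argument using that $H(M)$ is algebraically independent (in effect, $\operatorname{HB}(\bar c)=\acl_{\mathcal{L}_H}(\bar c)\cap H$, cf.\ Lemma \ref{lem-inHB} and Proposition \ref{algclosure}).

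The natural repair avoids counting altogether and proves two inclusions. For $\supseteq$: $\bar h_1\subseteq\acl_{\mathcal{L}_H}(\bar a)\cap H\subseteq\acl_{\mathcal{L}_H}(\bar a\bar b)\cap H\subseteq\operatorname{HB}(\bar a\bar b)$, and similarly $\bar h_2\subseteq\acl_{\mathcal{L}_H}(\bar a\bar b)\cap H$ using the relative version of Proposition \ref{algclosure} over the $H$-independent base $\acl_{\mathcal{L}_H}(\bar a)$. For $\subseteq$: use your independence computation together with the ``$H$-basis lies in every witness'' fact just mentioned. Your disjointness observation ($\bar h_1\cap\bar h_2=\emptyset$, via minimality of $\bar h_2$ and $H$-independence of $\acl_{\mathcal{L}_H}(\bar a)$) is correct and is what makes the union in the statement disjoint; but as written the proof of the key cardinality step is based on a false formula and must be replaced along these lines.
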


In particular, if in the proposition above the tuple $\bar{a}$ is $H$-independent, we have
$\operatorname{HB}(\bar{a}\bar{b})=H(\bar a)\cup \operatorname{HB}(\bar b/\bar{a})$.
Sometimes we will abuse the notation and write $\operatorname{HB}(\bar b/\bar{a})$ for
$\operatorname{HB}(\bar b/\acl_{\mathcal{L}_H}(\bar{a}))$.

\begin{definition}\label{defnowheretrivial} Let $T$ be an $SU$-rank 1 theory and let $M\models T$ be $|T|^+$-saturated. Let $A \subset M$ with 
$|A|\leq |T|$ and let $p(x)\in S_1(A)$ be non-algebraic. We say $p(x)$ is \emph{non-trivial} if for some (all) realization 
$a\models p(x)$ there are finite subsets $\bar{b},\bar{c}$ of $M$ such that $a\ind_A \bar{b}$, $a\ind_A \bar{c}$ and
$a\in \acl(A,\bar{b},\bar{c})$. Otherwise, we say $p(x)$ is trivial. 

We say $T$ is \emph{nowhere trivial} if for all $A \subset M$ with 
$|A|\leq |T|$ and $p(x)\in S_1(A)$, if $p(x)$ is non-algebraic, then it is non-trivial. 
\end{definition}

\begin{observation}
 Let $T$ be an $SU$-rank 1 theory and let $M\models T$ be $|T|^+$-saturated. Let $A \subset M$ with $|A|\leq |T|$ and let $p(x)\in S_1(A)$ be non-algebraic. Usually the type $p(x)$ is called trivial if the pregeometry $p(M)$ is trivial. We will now prove that if  $p(x)$ is \emph{non-trivial}, we can choose $\bar b, \bar c$ to be tuples in $p(M)$ and thus the two approaches coincide.
 Indeed, assume $p(x)$ is non-trivial in the sense of Definition  \ref{defnowheretrivial} and choose $a,\bar b,\bar c$ witnessing the definition. 
 Let $(a_i,\bar b_i)_i$ be a Morley sequence in $\tp(a,\bar b/\bar c A)$ with 
 $(a,\bar b)=(a_0,\bar b_0)$. By supersimplicity, there is $N$ such that
 $a_0,\bar b_0\ind_{A(a_i,\bar b_i)_{0<i<N}} \bar c$. So we still have $a_0\ind_A \bar b_0$, $a_0\ind_A (a_i,\bar b_i)_{0<i<N}$ and
 $a_0\in \acl(A \bar b_0 (a_i,\bar b_i)_{0<i<N})$. Let $\bar a'=(a_i)_{0\leq i<N}$ and let $(\bar a'_j)_j$ be a Morley sequence in $\tp(\bar a'/ (\bar b_i)_{0\leq i<N} A)$ with 
 $\bar a'_0=\bar a'$.  Again by supersimplicity, there is $N'$ such that
 $\bar a'\ind_{A(\bar a_j')_{0<j<N'}} (\bar b_i)_{0\leq i<N}$.
 We now get $a_0\ind_A(a_i)_{0<i<N}$, $a_0\ind_A(\bar a_j')_{0<j<N'}$
and $a_0\in \acl(A (a_i)_{0<i<N}(\bar a_j')_{0<j<N'})$ and all tuples under consideration are realizations of $p(x)$.

\end{observation}

\begin{example}There are several examples of $SU$-rank 1 nowhere trivial theories, for example any completion of the theory of pseudofinite fields, or the theory of torsion-free divisible abelian groups. In both cases there is a group structure, and using group generics it is easy to build algebraic triangles that witness nowhere triviality. 

Now consider the language $\mathcal{L}=\{R_{+},R_\times,R_{RG},P\}$, where $R_{+},R_\times$ are ternary
relations, $R_{RG}$ is a binary relation and $P$ is an unary relation. Let $M$ be a model where
we interpret $P(M)$ as a pseudofinite field with $R_+,R_\times$ being the graphs of addition and multiplication, $\neg P(M)$ is a random graph with $R_{RG}$ being the edge relation, and no other relations hold. Then $Th(M)$ is not nowhere trivial, since in $\neg P(M)$ all types are trivial.
\end{example}

Note that in Definition \ref{defnowheretrivial} the subsets $\bar{b},\bar{c}$ can be taken so that each of them is  
$A$-independent and $\bar{b}\ind_A \bar{c}$. 
It is proved in \cite{BeVa2} that whenever 
$T$ is an $SU$-rank 1 theory, the theory $T^{ind}$ is supersimple of $SU$-rank at most $\omega$,
and it is equal to $\omega$ whenever $T$ has a non-trivial type. To avoid confusion, we will always use $\dim(\bar{a})$ to denote the $\Le$-algebraic dimension of a tuple $\bar{a}$ with respect to the theory $T$, and $\operatorname{SU}(\bar{a})$ or $\operatorname{SU}(p)$ for the $SU$-rank of the type of the tuple $\bar{a}$ with respect to $T^{ind}$.

We will revisit the arguments from 
\cite{BeVa2} and calculate explicitly the $SU$-rank of types when $T$ is nowhere trivial. We will need the
following two results from \cite{BeVa2}:

\begin{proposition}\label{rankHfinite} Let $T$ be an $SU$-rank 1 theory.
Let $(M, H(M))\models T^{ind}$, let $B\subset M$ be $H$-independent, let $\bar{a}$ be a finite tuple in $H(M)$. Let $k=\dim(\bar a/B)$, then $\operatorname{SU}(\tp_{\mathcal{L}_H}(\bar a/B))=k$.
\end{proposition}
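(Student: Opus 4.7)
The plan is to proceed by induction on $k$, reducing via the Lascar inequalities for $\operatorname{SU}$-rank in the supersimple theory $T^{ind}$ to the single-element case. First I would replace $\bar a$ by a maximal $\mathcal L$-algebraically independent subtuple $(a_1,\ldots,a_k)$ over $B$: since $B$ is $H$-independent, Proposition~\ref{algclosure} gives $\acl_{\mathcal L_H}(Ba_1\cdots a_k)\supseteq\acl_{\mathcal L}(Ba_1\cdots a_k)\ni\bar a$, so $\bar a$ is $\mathcal L_H$-interalgebraic with $(a_1,\ldots,a_k)$ over $B$ and the two tuples have the same $\operatorname{SU}$-rank. The case $k=0$ is then immediate: $\bar a\in\acl_{\mathcal L}(B)=\acl_{\mathcal L_H}(B)$.

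For $k\geq 1$, the set $Ba_1$ remains $H$-independent: $a_1\in H(M)$ gives $H(Ba_1)=H(B)\cup\{a_1\}$, and base monotonicity of algebraic independence upgrades $B\ind_{H(B)}H(M)$ to $Ba_1\ind_{H(B)a_1}H(M)$. By exchange $\dim(a_2\cdots a_k/Ba_1)=k-1$, so the induction hypothesis yields $\operatorname{SU}(a_2\cdots a_k/Ba_1)=k-1$; combined with the Lascar inequalities
\[
\operatorname{SU}(a_1/B)+(k-1)\ \leq\ \operatorname{SU}(\bar a/B)\ \leq\ \operatorname{SU}(a_1/B)\oplus(k-1),
\]
the general statement reduces to proving the base case $\operatorname{SU}_{\mathcal L_H}(a/B)=1$ for a single $a\in H(M)\setminus\acl_{\mathcal L}(B)$ with $B$ $H$-independent. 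The lower bound $\operatorname{SU}(a/B)\geq 1$ is immediate from Proposition~\ref{algclosure}, which gives $\acl_{\mathcal L_H}(B)=\acl_{\mathcal L}(B)\not\ni a$.

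For the upper bound, fix $D\supseteq B$ with $D$ $H$-independent and $a\notin\acl_{\mathcal L_H}(D)=\acl_{\mathcal L}(D)$; I must show $\tp_{\mathcal L_H}(a/D)$ does not divide over $B$. Take an $\mathcal L_H$-indiscernible sequence $(D_i)_{i<\omega}$ over $B$ with $D_0=D$: each $D_i$ is $H$-independent, and the sequence is a fortiori $\mathcal L$-independent over $B$. Since $T$ has $\operatorname{SU}$-rank one and each $\tp_{\mathcal L}(a/D_i)$ is a non-forking extension of $\tp_{\mathcal L}(a/B)$, the $\mathcal L$-type $\bigcup_i\tp_{\mathcal L}(a/D_i)$ is consistent and non-algebraic, so by the density property it has a realization $a'\in H(M)$. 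Each $a'D_i$ is then $H$-independent with $H(a'D_i)=H(D_i)\cup\{a'\}$, and Lemma~\ref{fundlem1} promotes $\tp_{\mathcal L}(a'/D_i)=\tp_{\mathcal L}(a/D_i)$ to $\tp_{\mathcal L_H}(a'/D_i)=\tp_{\mathcal L_H}(a/D_i)$. Thus $a'$ realizes $\bigcup_i\tp_{\mathcal L_H}(a/D_i)$, so no formula in $\tp_{\mathcal L_H}(a/D)$ divides over $B$.

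The crux is this production of a common realization $a'\in H(M)$: here the hypothesis $a\in H(M)$ is essential, since then the $H$-basis of $a$ over any $H$-independent set is $\{a\}$ itself, so Lemma~\ref{fundlem1} collapses the $\mathcal L_H$-type of $a$ to a bare $\mathcal L$-type, and $\mathcal L_H$-dividing of $a$ is governed by $\mathcal L$-dividing, which in the $\operatorname{SU}$-rank one theory $T$ is just algebraic dependence.
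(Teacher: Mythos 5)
The paper itself does not prove Proposition \ref{rankHfinite}; it imports it from \cite{BeVa2}, so your argument can only be compared with the standard one there, which it essentially reconstructs: reduce by Lascar inequalities and interalgebraicity to a single $a\in H(M)\setminus\acl_{\mathcal{L}}(B)$, get the lower bound from $\acl_{\mathcal{L}_H}(B)=\acl_{\mathcal{L}}(B)$, and for the upper bound realize the translated types along an indiscernible sequence by an element of $H(M)$ and upgrade $\mathcal{L}$-type equality to $\mathcal{L}_H$-type equality via Lemma \ref{fundlem1} (both $aD_i$ and $a'D_i$ are $H$-independent precisely because $a,a'\in H(M)$, which is where the hypothesis enters). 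That skeleton is correct.

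The one genuine gap is the sentence asserting that $\bigcup_i\tp_{\mathcal{L}}(a/D_i)$ is ``consistent and non-algebraic''. Consistency does follow from non-forking (hence non-dividing, by simplicity of $T$) of $\tp_{\mathcal{L}}(a/D)$ over $B$ together with $\mathcal{L}$-indiscernibility of $(D_i)$ over $B$ — incidentally the sequence is a fortiori $\mathcal{L}$-\emph{indiscernible}, not ``$\mathcal{L}$-independent''; indiscernibility gives no independence. But non-algebraicity over $\bigcup_i D_i$ does not follow from each copy being a non-forking extension: a priori every realization of the union could lie in $\acl_{\mathcal{L}}(B\cup\bigcup_iD_i)$, and then the density property yields nothing — and you genuinely need a realization inside $H(M)$, since $H(x)$ belongs to the $\mathcal{L}_H$-type whose copies must be realized. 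To close this you need the standard chain-condition fact for simple theories: if $(D_i)$ is $B$-indiscernible and $\tp_{\mathcal{L}}(a/BD_0)$ does not fork over $B$, then the union of the copies has a realization $a''$ with $a''\ind_B\bigcup_iD_i$, which in the $SU$-rank one theory $T$ means exactly $a''\notin\acl_{\mathcal{L}}(B\cup\bigcup_iD_i)$; only then does the union have a non-algebraic completion, all of whose formulas are non-algebraic, so that density produces $a'\in H(M)$. Two smaller points should also be addressed: the density property of Definition \ref{hstruc} is stated over finite-dimensional parameter sets, while $B$ (hence $\bigcup_iD_i$) may be infinite-dimensional, so you should invoke the first-order density axioms of $T^{ind}$ plus saturation to realize non-algebraic types over arbitrary small sets in $H(M)$; and the reduction to $H$-independent $D$, as well as the $H$-independence of the conjugates $D_i$, should be justified (replace $D$ by $\acl_{\mathcal{L}_H}(D)$, which is $H$-independent by Proposition \ref{algclosure}, and use homogeneity of the saturated model for the $D_i$). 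With these repairs the proof goes through.
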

\bdem We do this proof by induction on the length of the tuple $\bar{a}$. 
When $\bar{a}=a$ is a singleton, it follows from Corollary 5.7 in \cite{BeVa2}. Now assume the result holds for tuples in $H(M)$ of length less than or equal to $n$, and let $\bar{a}=(a_1,\ldots,a_n,a_{n+1})$ be an $(n+1)$-tuple from $H(M)$. 

Let us fix $\bar{c}=(a_1,\ldots,a_n)$. Note that as $B$ is $H$-independent and $a_{n+1}\in H(M)$,  the set $B\cup \{a_{n+1}\}$ is also $H$-independent. So,   $\operatorname{SU}(\bar{c}/Ba_{n+1})=\dim(\bar{c}/Ba_{n+1})$ by induction hypothesis. Since Lascar inequalities become equalities when the ranks involved are finite and the dimension coming from a pregeometry is always additive, we have
\begin{align*}
\operatorname{SU}(\bar{a}/B)&=\operatorname{SU}(\bar{c}/Ba_{n+1})+SU(a_{n+1}/B)\\
&=\dim(\bar{c}/Ba_{n+1})+\dim(a_{n+1}/B)\\
&=\dim(\bar{c},a_{n+1}/B)=\dim(\bar{a}/B). \qedhere
\end{align*}
\edem

\begin{proposition}[\cite{BeVa2}, Corollary 5.8] \label{rankinfinite}  Let $T$ be an $SU$-rank 1 theory.
Let $(M, H(M))\models T^{ind}$, let $B\subset M$ be $H$-independent, let $a$ be a singleton and assume that $a \not \in \acl(BH(M))$ and that $\tp_{\Le}(a/B)$ is non-trivial. Then $\operatorname{SU}(\tp_{\mathcal{L}_H}(a))=\omega$.
\end{proposition}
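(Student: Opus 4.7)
The plan is to split the equality into the two inequalities $\operatorname{SU}(\tp_{\mathcal{L}_H}(a/B))\leq\omega$ and $\geq\omega$. The upper bound is a general fact about $T^{ind}$ when $T$ is of $SU$-rank one, as recalled just before Proposition~\ref{rankHfinite} and established in \cite{BeVa2}. For the lower bound, I would exhibit, for each finite $n$, a forking extension of $\tp_{\mathcal{L}_H}(a/B)$ of $SU$-rank exactly $n$, so that $\operatorname{SU}(\tp_{\mathcal{L}_H}(a/B))\geq n+1$ for every $n$, hence $\geq\omega$.

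The key combinatorial step is to upgrade the single algebraic triangle $(a,b,c)$ supplied by non-triviality to a \emph{genuine} $(n+1)$-dimensional configuration over $B$: a tuple $\bar b=(b_1,\dots,b_n)$ and a singleton $c^*$ such that $\{b_1,\dots,b_n,c^*\}$ is $\acl_{\mathcal{L}}$-independent over $B$, each coordinate is independent from $a$ over $B$, $a\in\acl_{\mathcal{L}}(B\bar bc^*)$, and crucially $a\notin\acl_{\mathcal{L}}(B\bar b'c^*)$ for every proper subtuple $\bar b'\subsetneq\bar b$. I expect this to be the main obstacle. The idea is to take a Morley sequence $(b_i,c_i)_{i\leq n}$ of $\tp_{\mathcal{L}}(bc/Ba)$ over $Ba$ and use the exchange property of the pregeometry to isolate an element $c^*\in\acl_{\mathcal{L}}(Bab_1\cdots b_n)$ that is $\acl_{\mathcal{L}}$-generic over $Bb_1\cdots b_n$; exchange gives $a\in\acl_{\mathcal{L}}(B\bar bc^*)$, and a dimension count on the Morley configuration shows that no proper subtuple of $\bar b$ together with $c^*$ suffices to make $a$ algebraic.

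With the configuration in hand, I would use iterated density (and $|T|^+$-saturation of $(M,H(M))$) to realize $\bar b$ by a tuple $\bar h=(h_1,\dots,h_n)\in H(M)^n$ with $\tp_{\mathcal{L}}(\bar h/Ba)=\tp_{\mathcal{L}}(\bar b/Ba)$, and then pick $c'$ with $\tp_{\mathcal{L}}(\bar hc'/Ba)=\tp_{\mathcal{L}}(\bar bc^*/Ba)$. Using the extension property I can in addition arrange $c'\ind_B H(M)$ in $\mathcal{L}$, so that $Bc'$ is $H$-independent. The genuine $(n+1)$-dimensionality of the original configuration transfers to $(a,\bar h,c')$ and forces $\operatorname{HB}(a/Bc')=\bar h$, whereas $\operatorname{HB}(a/B)=\emptyset$ follows from the hypothesis $a\notin\acl_{\mathcal{L}}(BH(M))$.

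To finish, Proposition~\ref{rankHfinite} applied to the $H$-independent set $Bc'$ and the tuple $\bar h\in H(M)^n$ of $\mathcal{L}$-dimension $n$ over $Bc'$ gives $\operatorname{SU}(\tp_{\mathcal{L}_H}(\bar h/Bc'))=n$. Proposition~\ref{algclosure} combined with the additivity of $\operatorname{HB}$ (Proposition~\ref{additivityHB}) then yields $a\in\acl_{\mathcal{L}_H}(Bc'\bar h)$ and $\bar h\subseteq\acl_{\mathcal{L}_H}(Bac')$, so the Lascar inequalities pin down $\operatorname{SU}(\tp_{\mathcal{L}_H}(a/Bc'))=n$. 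Finally, the change $\operatorname{HB}(a/B)=\emptyset\neq\bar h=\operatorname{HB}(a/Bc')$ witnesses that $\tp_{\mathcal{L}_H}(a/Bc')$ is a forking extension of $\tp_{\mathcal{L}_H}(a/B)$, giving $\operatorname{SU}(\tp_{\mathcal{L}_H}(a/B))\geq n+1$ and completing the proof.
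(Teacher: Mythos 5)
You should know at the outset that the paper itself does not prove this proposition: it is quoted verbatim as Corollary 5.8 of \cite{BeVa2}, so your attempt can only be measured against what a correct proof must contain. Your overall architecture is the standard and correct one: the upper bound is the general fact that $T^{ind}$ has $\operatorname{SU}$-rank at most $\omega$, and for the lower bound it suffices to produce, for each $n$, a base $Bc'$ over which $a$ acquires an $H$-basis $\bar h$ of size $n$; then Proposition \ref{rankHfinite} gives $\operatorname{SU}(\bar h/Bc')=n$, interalgebraicity of $a$ and $\bar h$ over $Bc'$ in $\mathcal{L}_H$ gives $\operatorname{SU}(a/Bc')=n$, and monotonicity of $\operatorname{SU}$-rank already yields $\operatorname{SU}(a/B)\geq n$ for every $n$ (so you do not need the final forking claim, which as stated would itself require the characterization of independence in $T^{ind}$ from \cite{BeVa2}). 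One repair along the way: the extension property cannot be used to arrange $c'\ind_B H(M)$, because $\tp_{\mathcal{L}}(c'/Ba\bar h)$ is algebraic and leaves no freedom in choosing $c'$; fortunately $c'\notin\acl_{\mathcal{L}}(BH(M))$, and hence the $H$-independence of $Bc'$, follows directly from $a\in\acl_{\mathcal{L}}(B\bar hc')$ together with the hypothesis $a\notin\acl_{\mathcal{L}}(BH(M))$.

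The genuine gap is exactly where you suspected it: the construction of the configuration $(\bar b,c^*)$. Exchange does show that \emph{any} $c^*\in\acl_{\mathcal{L}}(Ba\bar b)\setminus\acl_{\mathcal{L}}(B\bar b)$ satisfies $a\in\acl_{\mathcal{L}}(B\bar bc^*)$, but the whole content lies in the minimality clause, and no ``dimension count'' produces an element satisfying it. Concretely, $c_1\in\acl_{\mathcal{L}}(Bab_1)$ is an element of $\acl_{\mathcal{L}}(Ba\bar b)$ that is non-algebraic over $B\bar b$ — so it meets every condition your recipe pins down — yet it violates minimality as soon as $n\geq 2$, since $a\in\acl_{\mathcal{L}}(Bb_1c_1)$. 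What you actually need is a singleton of $\acl_{\mathcal{L}}(Ba\bar b)$ avoiding the finitely many closed sets $\acl_{\mathcal{L}}(B\bar b)$ and $\acl_{\mathcal{L}}(Bab_1\cdots\widehat{b_i}\cdots b_n)$, each of dimension $n$ over $B$ inside a closed set of dimension $n+1$; in a general pregeometry a closed set can perfectly well be covered by finitely many closed subsets of smaller dimension (this is precisely what happens in trivial geometries), so such an element does not exist for dimension reasons, and its existence for every $n$ is the real content of Corollary 5.8 of \cite{BeVa2}. A correct argument has to exploit non-triviality repeatedly — for instance by induction on $n$, using that non-triviality of $\tp_{\mathcal{L}}(a/B)$ passes to any element interalgebraic with $a$ over an enlarged base (such as the previous $c^*$ over $B\bar b$, after moving the witnessing triangle so as to be independent from the data already constructed) and replacing that element by a fresh algebraic triangle — and in general it produces a tuple, not a singleton, on the $c$-side (the witnesses $\bar b,\bar c$ in Definition \ref{defnowheretrivial} are tuples to begin with), after which the transfer into $H$, the $H$-independence of the new base, and the minimality of the resulting $H$-basis must all be re-verified. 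As it stands, the Morley-sequence-plus-exchange step is an assertion rather than a proof, and it is the heart of the matter.
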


\begin{proposition}\label{rankomega} Let $T$ be an $SU$-rank 1 theory which is nowhere trivial.
Let $(M, H(M))\models T^{ind}$, let $B\subset M$ be $H$-independent and let $\bar{a}$ be tuple and 
write $\bar a=\bar a_1\bar a_2$ so that
$\bar a_1$ is independent over $B\cup H(M)$ and $\bar a_2\in \acl(\bar a_1H(M)B)$. Let 
$n=\dim(\bar{a}/B\cup H(M))=|\bar a_1|$ and let
$k=|\operatorname{HB}(\bar a/B)|$. Then $\operatorname{SU}(\tp_{\mathcal{L}_H}(\bar a/B))=\omega\cdot n+k$.
\end{proposition}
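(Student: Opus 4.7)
The plan is to obtain matching upper and lower bounds for $\operatorname{SU}(\tp_{\mathcal{L}_H}(\bar a/B))$ via the Lascar inequalities, after first replacing $\bar a$ by $\bar a \bar h$ where $\bar h := \operatorname{HB}(\bar a/B)$. Since $B$ is $H$-independent, Propositions \ref{algclosure} and \ref{additivityHB} yield $\acl_{\mathcal{L}_H}(\bar a B)=\acl_{\mathcal{L}}(\bar a B \bar h)$; in particular $\bar h \in \acl_{\mathcal{L}_H}(\bar a B)$, so $\operatorname{SU}(\tp_{\mathcal{L}_H}(\bar a/B))=\operatorname{SU}(\tp_{\mathcal{L}_H}(\bar a \bar h/B))$. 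Minimality in the definition of the $H$-basis forces $\bar h$ to be $\mathcal{L}$-independent over $B$, so Proposition \ref{rankHfinite} gives $\operatorname{SU}(\bar h /B)=k$.

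Applying the Lascar inequalities
\[
\operatorname{SU}(\bar a /\bar h B)+\operatorname{SU}(\bar h /B)\;\leq\;\operatorname{SU}(\bar a \bar h /B)\;\leq\;\operatorname{SU}(\bar a /\bar h B)\oplus \operatorname{SU}(\bar h /B)
\]
to the pair $(\bar a,\bar h)$ over $B$, and noting that for ordinals $\alpha=\omega\cdot n$ and $\beta=k<\omega$ one has $\alpha+\beta=\alpha\oplus\beta=\omega\cdot n+k$, the problem reduces to showing $\operatorname{SU}(\bar a /\bar h B)=\omega\cdot n$. To handle this, I would split $\bar a=\bar a_1\bar a_2$ as in the statement and first establish that $\bar a_2\in \acl_{\mathcal{L}_H}(\bar a_1 \bar h B)$. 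By the additivity of $\operatorname{HB}$, the tuple $\operatorname{HB}(\bar a_2/\acl_{\mathcal{L}_H}(\bar a_1 B))$ is a subtuple of $\bar h$, so $\bar a_2 \ind_{\bar h\, \acl_{\mathcal{L}_H}(\bar a_1 B)} H(M)$; combined with $\bar a_2\in\acl_{\mathcal{L}}(\bar a_1 H(M) B)$, this independence forces $\bar a_2\in\acl_{\mathcal{L}}(\bar a_1 B \bar h)$ by the standard principle that algebraicity over a set plus independence from it collapses to the base. Hence $\operatorname{SU}(\bar a_2/\bar a_1\bar h B)=0$, and a further application of the Lascar inequalities reduces matters to showing $\operatorname{SU}(\bar a_1 /\bar h B)=\omega\cdot n$.

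For this final step I would iterate the Lascar inequalities element by element. The upper bound $\operatorname{SU}(\bar a_1/\bar h B)\leq\bigoplus_{i=1}^n \operatorname{SU}(a_i/a_1\cdots a_{i-1}\bar h B)\leq\omega\cdot n$ uses that each singleton has $\operatorname{SU}$-rank at most $\omega$, since $T^{ind}$ is supersimple of rank at most $\omega$. For the lower bound, the $\mathcal{L}$-independence of $\bar a_1$ over $BH(M)$ gives $a_i\notin\acl_{\mathcal{L}}(B H(M) a_1\cdots a_{i-1})$, and nowhere triviality guarantees that the corresponding $\mathcal{L}$-type is non-trivial in the sense of Definition \ref{defnowheretrivial}, so Proposition \ref{rankinfinite} applies to give $\operatorname{SU}(a_i/a_1\cdots a_{i-1}\bar h B)=\omega$; the iterated Lascar lower bound then produces $\omega\cdot n$. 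The main obstacle I anticipate is the algebraicity step $\bar a_2\in \acl_{\mathcal{L}_H}(\bar a_1 \bar h B)$, which requires keeping careful track of the $H$-basis over $\acl_{\mathcal{L}_H}(\bar a_1 B)$ rather than just over $\bar a_1 B$, together with a small verification that $\bar h B a_1\cdots a_{i-1}$ remains $H$-independent at each step so that Proposition \ref{rankinfinite} is legitimately applicable.
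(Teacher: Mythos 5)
Your argument is correct, and it uses the same basic ingredients as the paper (Propositions \ref{rankHfinite} and \ref{rankinfinite}, nowhere triviality, and Lascar's inequalities), but it arranges the decomposition differently. The paper (working over $B=\emptyset$) first gets $\operatorname{SU}(\bar a_1)=\omega\cdot n$ for the $H$-independent subtuple (its Cases 1--2, which are your singleton-by-singleton iteration in compressed form), and then obtains the finite contribution on top, via the observation that $\bar a_2$ and $\operatorname{HB}(\bar a)$ are $\mathcal{L}_H$-interalgebraic over $\bar a_1$, so $\operatorname{SU}(\bar a_2/\bar a_1)=k$ by Proposition \ref{rankHfinite}. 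You instead adjoin $\bar h=\operatorname{HB}(\bar a/B)$ to the tuple, put the finite rank at the bottom ($\operatorname{SU}(\bar h/B)=k$), show $\bar a_2\in\acl_{\mathcal{L}}(\bar a_1\bar h B)$, and stack $\operatorname{SU}(\bar a_1/\bar h B)=\omega\cdot n$ on top. This ordering is actually the more convenient one for the ordinal arithmetic: with the finite rank at the bottom, both Lascar bounds equal $\omega\cdot n+k$ at once, whereas with the paper's ordering the raw lower bound is only $k+\omega\cdot n=\omega\cdot n$, and one implicitly has to pass to the interalgebraic tuple $\bar a\bar h$ (essentially your arrangement) to recover the $+k$; so your bookkeeping is, if anything, slightly more careful. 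You also work over a general $H$-independent $B$ rather than reducing to $B=\emptyset$, at the cost of the checks you flag yourself — that $\bar h$ is independent over $B$ (forced by minimality of the $H$-basis), that $\acl_{\mathcal{L}_H}(\bar a_1 B)=\acl_{\mathcal{L}}(\bar a_1 B)$ since $\operatorname{HB}(\bar a_1/B)=\emptyset$, and that $B\bar h a_1\cdots a_{i-1}$ stays $H$-independent at each step — all of which go through.
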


\begin{proof} We may assume $(M, H(M))\models T^{ind}$ is sufficiently saturated. We will prove the
argument for $B=\emptyset$, it is an easy exercise to see how the proof carries out to the general case.

By Corollary \ref{interalg-a1a2}, $\bar{a}$ is interalgebraic with $\bar{a}_1 \operatorname{HB}(\bar{a})$. Therefore, $\operatorname{SU}(\tp_{\Le_H}(\bar{a}))=\operatorname{SU}(\tp_{\mathcal{L}_H}(\bar{a}_1 \operatorname{HB}(\bar{a}))$. By the Lascar inequalities,
\begin{align*}
&\operatorname{SU}(\tp_{\Le_H}(\bar{a}_1/\operatorname{HB}(\bar{a})))+ \operatorname{SU}(\tp_{\Le_H}(\operatorname{HB}(\bar{a})))\leq \operatorname{SU}(\tp_{\mathcal{L}_H}(\bar{a}_1 \operatorname{HB}(\bar{a})))\\
&\leq \operatorname{SU}(\tp_{\Le_H}(\bar{a}_1/\operatorname{HB}(\bar{a})))\oplus \operatorname{SU}(\tp_{\Le_H}(\operatorname{HB}(\bar{a}))).
\end{align*}
Suppose $\bar{a}_1=(a_1,\ldots,a_n)$ and consider an index $i$ with $1\leq i\leq n$. Then the tuple $\operatorname{HB}(\bar{a})a_1,\ldots,a_{i-1}$ is $H$-independent and $\operatorname{SU}(\tp_{\Le_H}(a_i/\operatorname{HB}(\bar{a})a_1,\ldots,a_{i-1}))=\omega$ by Proposition \ref{rankinfinite}. 
Hence, by Lascar inequalities we  have $\operatorname{SU}(\tp_{\Le_H}(\bar{a}_1/\operatorname{HB}(\bar{a})))=\omega\cdot n$. On the other hand, by Proposition \ref{rankHfinite} we have $\operatorname{SU}(\tp_{\Le_H}(\operatorname{HB}(\bar{a})))=k$. Thus, $\omega\cdot n+k\leq \operatorname{SU}(\tp_{\mathcal{L}_H}(\bar{a}_1 \operatorname{HB}(\bar{a})))\leq \omega\cdot n+k$, and we get $\operatorname{SU}(\tp_{\mathcal{L}_H}(\bar a))=\omega\cdot n+k$.

\end{proof}

The result above can be generalized to arbitrary $SU$-rank 1 theories, but the full description is more elaborated. 

\begin{observation}\label{rankomegageneral} Let $T$ be an $SU$-rank 1 theory.
Let $(M, H(M))\models T^{ind}$, let $B\subset M$ be $H$-independent and let $\bar{a}$ be tuple and 
write $\bar a=\bar a_0 \bar a_1 \bar a_2$ so that:
\begin{enumerate} 
\item The tuple $\bar{a}_0$ is independent over $H(M)$ and for each element $c\in\bar{a}_0$, $\tp(c/B)$ is trivial.
\item The tuple $\bar{a}_1$ is independent over $H(M)$ and for each element $c\in\bar{a}_1$, $\tp(c/B)$ is non-trivial.
\item $\bar a_2\in \acl(\bar a_0\bar a_1H(M)B)$.
\end{enumerate} 
Let $m=|\bar a_0|$, $n=|\bar a_1|$, $k=|\operatorname{HB}(\bar a/B)|$. Then $\operatorname{SU}(\tp_{\mathcal{L}_H}(\bar a/B))=\omega\cdot n+k+m$.
\end{observation}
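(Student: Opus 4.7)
The plan is to follow the three-case structure of the proof of Proposition \ref{rankomega}, inserting one new base case that handles singletons whose $\mathcal{L}$-type over $B$ is trivial. The triviality hypothesis will collapse their $\operatorname{SU}$-rank in $T^{ind}$ from the $\omega$ of Proposition \ref{rankinfinite} down to $1$, and the finite contribution $m$ will then accumulate via Lascar's inequalities.

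\textbf{Step 1 (trivial singleton).} I claim that if $c$ is a singleton, $\tp_{\mathcal{L}}(c/B)$ is non-algebraic and trivial, and $c\notin\acl_{\mathcal{L}}(BH(M))$, then $\operatorname{SU}(\tp_{\mathcal{L}_H}(c/B))=1$. The lower bound follows from Proposition \ref{algclosure}, since $\acl_{\mathcal{L}_H}(B)\subseteq\acl_{\mathcal{L}}(BH(M))$. For the upper bound, observe that $c\notin\acl_{\mathcal{L}}(BH(M))$ forces $\operatorname{HB}(c/B)=\emptyset$ and $c\ind_B^{\mathcal{L}} H(M)$. Suppose toward a contradiction that some $D=\acl_{\mathcal{L}_H}(D)\supseteq B$ satisfies $c\nind_B^{\mathcal{L}_H} D$ and $c\notin\acl_{\mathcal{L}_H}(D)$. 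By the standard description of $\mathcal{L}_H$-forking (from \cite{BeVa2}, using Propositions \ref{algclosure} and \ref{additivityHB}), either (a) $c\nind_B^{\mathcal{L}} D$, in which case $SU$-rank $1$ of $T$ yields $c\in\acl_{\mathcal{L}}(BD)\subseteq\acl_{\mathcal{L}_H}(D)$, contradicting our assumption; or (b) $\operatorname{HB}(c/BD)\neq\emptyset$, in which case there is some $h\in H(M)$ with $c\in\acl_{\mathcal{L}}(BDh)\setminus\acl_{\mathcal{L}}(BD)$. In case (b), take a $B$-independent tuple $\bar d\subseteq D$ (in $\mathcal{L}$) such that $c\in\acl_{\mathcal{L}}(B\bar d h)$: then $c\ind_B^{\mathcal{L}} h$ (from $c\ind_B^{\mathcal{L}} H(M)$) and $c\ind_B^{\mathcal{L}}\bar d$ (otherwise $c\in\acl_{\mathcal{L}}(B\bar d)\subseteq\acl_{\mathcal{L}_H}(D)$ by $SU$-rank $1$), so $(\bar d,h)$ forms an algebraic triangle over $B$ witnessing $c$, contradicting the triviality of $\tp_{\mathcal{L}}(c/B)$.

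\textbf{Steps 2--4 (assembly).} Using Step 1, an induction on $m$ with Lascar's inequalities together with the standard fact that triviality in a $SU$-rank $1$ theory is preserved when the base is enlarged by independent parameters yields $\operatorname{SU}(\tp_{\mathcal{L}_H}(\bar a_0/B))=m$. Applying Cases 1 and 2 of the proof of Proposition \ref{rankomega} over the extended base $B\bar a_0$ (noting that each element of $\bar a_1$ remains non-trivial over $B\bar a_0$ and that $\bar a_1$ stays $H$-independent over $B\bar a_0$) yields $\operatorname{SU}(\tp_{\mathcal{L}_H}(\bar a_1/B\bar a_0))=\omega\cdot n$. Putting the $\omega\cdot n$-piece first in Lascar's inequalities gives $\omega\cdot n+m\le\operatorname{SU}(\tp_{\mathcal{L}_H}(\bar a_0\bar a_1/B))\le\omega\cdot n\oplus m=\omega\cdot n+m$, forcing equality. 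Finally, $\bar a_2$ is $\mathcal{L}_H$-interalgebraic with $\operatorname{HB}(\bar a/B)$ over $\bar a_0\bar a_1 B$, so Proposition \ref{rankHfinite} gives $\operatorname{SU}(\tp_{\mathcal{L}_H}(\bar a_2/\bar a_0\bar a_1 B))=k$, and one last application of Lascar's inequalities (as in Case 3 of Proposition \ref{rankomega}) delivers $\operatorname{SU}(\tp_{\mathcal{L}_H}(\bar a/B))=\omega\cdot n+k+m$.

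\textbf{Main obstacle.} The essential new content is Step 1: the triviality of $\tp_{\mathcal{L}}(c/B)$ is precisely what blocks the construction of the infinite forking chain that produces rank $\omega$ in Proposition \ref{rankinfinite}. Once trivial singletons are pinned at rank $1$, the remaining steps are a mechanical reprise of the three-case decomposition of Proposition \ref{rankomega}.
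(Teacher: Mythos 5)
The paper states this Observation without proof, so there is no in-paper argument to compare against; your route (a new base case pinning trivial singletons outside the small closure at $\operatorname{SU}$-rank exactly $1$, then reassembling through the case analysis of Proposition \ref{rankomega}) is the natural one, and the key idea of Step 1 is right: a non-algebraic forking extension of $\tp_{\mathcal{L}_H}(c/B)$ must either make $c$ $\mathcal{L}$-dependent on $D$ over $B$ or create a nonempty $H$-basis, and the latter manufactures an algebraic triangle contradicting triviality. (The dichotomy (a)/(b) is the crux and deserves an explicit citation of the characterization of independence in $T^{ind}$ from \cite{BeVa2}: you need that $c\ind_B^{\mathcal{L}}D$ together with $c\notin\acl_{\mathcal{L}}(DH(M))$ already gives non-forking.)

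Two points need repair. First, in case (b) you claim there is a \emph{single} $h\in H(M)$ with $c\in\acl_{\mathcal{L}}(BDh)$; this is false whenever $|\operatorname{HB}(c/D)|\geq 2$, since by minimality no proper subtuple of $\operatorname{HB}(c/D)$ makes $c$ algebraic over $D$. The fix is immediate: put $\bar h=\operatorname{HB}(c/D)$ and use as the two sides of the triangle either $\bar d\subseteq D$ finite and the whole tuple $\bar h$ (Definition \ref{defnowheretrivial} allows tuples on both sides), or a single $h\in\bar h$ against $\bar d\cup(\bar h\setminus\{h\})$, noting $c\notin\acl_{\mathcal{L}}(BD(\bar h\setminus\{h\}))$ again by minimality of $\bar h$. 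Second, your final application of Lascar's inequalities, conditioning the finite-rank piece $\bar a_2$ on $\bar a_0\bar a_1$, only yields $\omega\cdot n+m\leq\operatorname{SU}(\bar a/B)\leq\omega\cdot n+m+k$ when $n\geq 1$, because the ordinal lower bound is $k+(\omega\cdot n+m)=\omega\cdot n+m$; the same gloss already occurs in Case 3 of Proposition \ref{rankomega}. To pin the exact value, replace $\bar a$ by the $\mathcal{L}_H$-interalgebraic tuple $\bar a_0\bar a_1\bar h$, where $\bar h$ enumerates $\operatorname{HB}(\bar a/B)$, and compute $\operatorname{SU}(\bar a_0\bar a_1/B\bar h)+\operatorname{SU}(\bar h/B)=(\omega\cdot n+m)+k$ (Proposition \ref{rankHfinite} for $\bar h$, your Steps 1--3 over the base $B\bar h$ for the rest), which matches the upper bound $(\omega\cdot n+m)\oplus k$. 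Finally, note that your assembly implicitly reads hypotheses (1)--(2) as in Proposition \ref{rankomega}, namely that $\bar a_0\bar a_1$ is algebraically independent over $BH(M)$; with only the literal ``$H$-independent'' hypothesis (which permits, say, repeated or interalgebraic coordinates) the count $\omega\cdot n+k+m$ fails, so this reading should be made explicit.
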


In this paper we will deal with $SU$-rank 1 theories which are nowhere trivial, where Proposition \ref{rankomega} holds. The following result shows that the $SU$-rank of $T^{ind}$ is Cantor-additive when $T$ is nowhere trivial of $SU$-rank 1.\footnote{With the same proof and using Observation \ref{rankomegageneral}, one could also show that in the general case, the $SU$-rank of $T^{ind}$ is also Cantor-additive whenever $T$ has $SU$-rank 1.}

\begin{lemma}\label{lem-addSU} Let $T$ be an $SU$-rank 1 theory which is nowhere trivial.
Let $(M, H(M))\models T^{ind}$, let $C\subset M$ be $H$-independent and let $\bar{a}$
and $\bar{b}$ be tuples. Then $\operatorname{SU}(\tp_{\mathcal{L}_H}(\bar a\bar b/C))=\operatorname{SU}(\tp_{\mathcal{L}_H}(\bar a/C))\oplus \operatorname{SU}(\tp_{\mathcal{L}_H}(\bar b/C\bar a))$.
\end{lemma}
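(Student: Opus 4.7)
The plan is to exploit the explicit formula for $SU$-rank provided by Proposition \ref{rankomega} and reduce the additivity question to separate additivity statements for the large dimension $\ldim$ and for the size of the $H$-basis $|\operatorname{HB}(\cdot/\cdot)|$. Recall that for the natural (Hessenberg) sum of two ordinals written in Cantor normal form below $\omega^2$, we have
\[ (\omega\cdot n_1 + k_1)\oplus(\omega\cdot n_2 + k_2) \;=\; \omega\cdot(n_1+n_2) + (k_1+k_2), \]
so proving the two componentwise equalities will immediately give the desired identity.

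Concretely, I would first set $n_1 := \ldim(\bar a/C)$, $k_1 := |\operatorname{HB}(\bar a/C)|$, $n_2 := \ldim(\bar b/C\bar a)$, $k_2 := |\operatorname{HB}(\bar b/C\bar a)|$, and $n := \ldim(\bar a\bar b/C)$, $k := |\operatorname{HB}(\bar a\bar b/C)|$, and then invoke Proposition \ref{rankomega} three times to identify the three $SU$-ranks with $\omega\cdot n_1+k_1$, $\omega\cdot n_2+k_2$, $\omega\cdot n+k$ respectively. For the middle invocation one must note that the base $C\bar a$ need not be $H$-independent; however by the remark following Proposition \ref{algclosure}, $\acl_{\mathcal{L}_H}(C\bar a)$ is always $H$-independent, and since $SU$-rank is invariant under replacing the base by its algebraic closure, we may compute over $\acl_{\mathcal{L}_H}(C\bar a)$ instead.

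Next I would establish the two additivity identities. For $n = n_1+n_2$, the argument is the standard additivity of dimension in a pregeometry applied to the localization at $C\cup H(M)$:
\[ \dim(\bar a\bar b/CH(M)) = \dim(\bar a/CH(M)) + \dim(\bar b/CH(M)\bar a). \]
For $k = k_1+k_2$, Proposition \ref{additivityHB} (the additivity of $\operatorname{HB}$) gives
\[ \operatorname{HB}(\bar a\bar b/C) = \operatorname{HB}(\bar a/C)\,\cup\,\operatorname{HB}(\bar b/\acl_{\mathcal{L}_H}(C\bar a)), \]
and the two pieces on the right are disjoint: every element of $\operatorname{HB}(\bar a/C)$ lies in $\acl_{\mathcal{L}_H}(C\bar a)$, so if it also appeared in $\operatorname{HB}(\bar b/\acl_{\mathcal{L}_H}(C\bar a))$, the latter would fail to be minimal among tuples in $H(M)$ witnessing $\bar b\ind_{\acl_{\mathcal{L}_H}(C\bar a)}H(M)$.

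Combining these with the Hessenberg sum formula above yields the desired equality $\operatorname{SU}(\tp_{\mathcal{L}_H}(\bar a\bar b/C)) = \operatorname{SU}(\tp_{\mathcal{L}_H}(\bar a/C))\oplus\operatorname{SU}(\tp_{\mathcal{L}_H}(\bar b/C\bar a))$. The only mildly delicate step is the bookkeeping around the fact that Proposition \ref{rankomega} is stated over an $H$-independent base, which is handled by passing to $\acl_{\mathcal{L}_H}$ as above; everything else is a direct application of previously established results.
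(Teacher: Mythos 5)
Your proof is correct and follows essentially the same route as the paper's: Proposition \ref{rankomega} applied to each of the three types, additivity of dimension in the pregeometry localized at $C\cup H(M)$, and Proposition \ref{additivityHB} together with the disjointness of $\operatorname{HB}(\bar a/C)$ and $\operatorname{HB}(\bar b/\acl_{\mathcal{L}_H}(C\bar a))$, combined via the Hessenberg sum $(\omega\cdot n_1+k_1)\oplus(\omega\cdot n_2+k_2)=\omega\cdot(n_1+n_2)+(k_1+k_2)$. Your explicit handling of the fact that $C\bar a$ need not be $H$-independent (passing to $\acl_{\mathcal{L}_H}(C\bar a)$, which is $H$-independent by the remark after Proposition \ref{algclosure}) is a detail the paper treats only implicitly through its abuse-of-notation convention for $\operatorname{HB}(\bar b/\bar a C)$.
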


\begin{proof}
Write $\bar a=\bar a_1\bar a_2$ so that
$\bar a_1$ is independent over $H(M)C$ and $\bar a_2\in \acl(\bar a_1H(M)C)$. Let $n_1=|\bar a_1|$ and let
$k_1=|\operatorname{HB}(\bar a/C)|$, then by Proposition \ref{rankomega}, 
$\operatorname{SU}(\tp_{\mathcal{L}_H}(\bar a/C))=\omega\cdot n_1+k_1$. Similarly write $\bar b=\bar b_1\bar b_2$ so that
$\bar b_1$ is independent over $\bar a H(M)C$ and $\bar b_2\in \acl(\bar{b}_1\bar a H(M)C)$. Let $n_2=|\bar b_1|$ and let
$k_2=|\operatorname{HB}(\bar b/\bar a C)|$. Since $\acl_{\mathcal{L}_H}(\bar a C)$ is $H$-independent, we can apply Proposition \ref{rankomega} and get that $\operatorname{SU}(\tp_{\mathcal{L}_H}(\bar b/C\bar a))=\operatorname{SU}(\tp_{\mathcal{L}_H}(\bar b/\acl_{\mathcal{L}_H}(C\bar a)))=\omega\cdot n_2+k_2$. Then, $\operatorname{SU}(\tp_{\mathcal{L}_H}(\bar a/C))\oplus \operatorname{SU}(\tp_{\mathcal{L}_H}(\bar b/C\bar a))=\omega\cdot (n_1+n_2)+(k_1+k_2)$.

Now consider the tuple $\bar a\bar b$. The subtuple $\bar a_1 \bar b_1$ is independent over $CH(M)$
and $\bar a_2\bar b_2 \in \acl(\bar a_1\bar b_1H(M)C)$. By Proposition \ref{additivityHB}, we have that 
$\operatorname{HB}(\bar a \bar b/C)=\operatorname{HB}(\bar a/C)\cup \operatorname{HB}(\bar b/\bar a C)$, which is a disjoint union, so 
$|\operatorname{HB}(\bar a \bar b/C)|=k_1+k_2$. Again using Proposition \ref{rankomega} we get $\operatorname{SU}(\tp_{\mathcal{L}_H}(\bar a\bar b/C))=\omega\cdot(n_1+n_2)+(k_1+k_2)$ as we wanted.
\end{proof}

\begin{remark}
Cantor-additivity of $SU$-rank seems not to be an usual property of supersimple theories of infinite $SU$-rank. It does not hold for ACFA (see 4.17 in \cite{Ch}): if $a$ is transformally trascendental and $b_1=\sigma(a)-a$, then
$\operatorname{SU}(a,b_1)=\omega$, $\operatorname{SU}(a/b_1)=1$ and $\operatorname{SU}(b_1)=\omega$. A similar example can be build for $T=DCF$ taking $a$ to be 
generic and $b_1=\delta(a)-a$. Are there structural consequences of supersimple theories 
of infinite $SU$-rank that have Cantor-additivity?
\end{remark}

\section{Existential $H$-formulas and definability of $SU$-rank}\label{section:dimension}

Let $T$ be a nowhere trivial $SU$-rank 1 theory (which in particular is geometric) and let $(M,H)$ be a sufficiently saturated model of $T^{ind}$.\footnote{To simplify notation, we will write $(M,H)$ instead of $(M,H(M))$ from this section onwards.}  In this section we deal with $\mathcal{L}_H$-formulas of the form $\exists \bar{z}\in H^{|\bar{z}|}~\varphi(\bar{x},\bar{z},\bar{y})$, where $\varphi(\bar{x},\bar{z},\bar{y})$ is an $\mathcal{L}$-formula. We will call such formulas \emph{existential $H$-formulas}, and we will see that these formulas are fundamental in the analysis of ranks for definable sets in $H$-structures: they witness the continuity of the $SU$-rank, and any other 
$\mathcal{L}_H$-formula can be approximated uniformly by existential $H$-formulas, up to smaller $SU$-rank. We will use them to show that in $T^{ind}$ the $\operatorname{SU}$-rank of a formula is definable in terms of its parameters.

\begin{notation}
For a tuple $\bar{z}=(z_1,\ldots,z_n)$ and $1\leq i\leq n$, we will write $\hat{z}_i$ to denote the tuple $(z_1,\ldots,z_{i-1},z_{i+1},\ldots,z_n)$, where the coordinate $z_i$ is omitted. Given a partition $\bar{x}=(\bar{x}_1,\bar{x}_2)$ of $\bar{x}$ into two subtuples, we can consider the formula $\varphi_{\bar{x}_1}(\bar{x}_1;\bar{x}_2):=\varphi(\bar{x})$ which is the result of reorganizing variables of $\varphi$. In the particular case that $\bar{x}_1=x_j$ is a single variable, we simply write $\varphi_j(x_j;\widehat{x}_j)$ instead of $\varphi_{x_j}(x_j;\widehat{x}_j)$.\\

\end{notation}

\begin{definition}\label{Halg}
Let $T$ be a geometric theory in the language $\Le$ and let $\varphi(\bar{x})$ be an $\Le$-formula. 
By elimination of $\exists^\infty$, for every formula $\varphi(\bar{x})$ and every partition $\bar{x}=(\bar{x}_1,\bar{x}_2)$, there is a formula $\operatorname{Alg}_{\varphi,\bar{x}_1}(\bar{x}_2)$ such that for every $M\models T$ and every $\bar{b}\in M^{\bar{x}_2}$, $M\models \operatorname{Alg}_{\varphi.\bar{x}_1}(\bar{b})$ if and only if the definable set $\varphi_{\bar{x}_1}(M^{|\bar{x}_1|};\bar{b})$ is finite. Again, in the particular case when $\bar{x}_1=x_j$ is a single variable, we will denote this formula by $\operatorname{Alg}_{\varphi,j}(\widehat{x}_j)$.\\

Similarly, since in a geometric theory dimension is definable, whenever $\varphi(\bar{x})$ is an $\Le$-formula, $\bar{x}=(\bar{x}_1,\bar{x}_2)$ and $k\leq |\bar{x}|$, there is an $\Le$-formula denoted by $\operatorname{Dim}_{\varphi,\bar{x}_1,k}(\bar{x}_2)$ such that for every $M\models T$, $M\models \operatorname{Dim}_{\varphi,\bar{x}_1,k}(\bar{b})$ if and only $\operatorname{dim}(\varphi(M^{|\bar{x}_1|};\bar{b}))=k$. When $\bar{x}_1=x_j$ is a single variable, we denote this formula by $\operatorname{Dim}_{\varphi,j,k}(\widehat{x}_j)$. \\

Finally, for an $\Le$-formula $\varphi(\bar{x},\bar{z})$, let us define the $\Le$-formula
\[\varphi_{\bar{z},H}(\bar{x},\bar{z}):=\varphi(\bar{x},\bar{z})\wedge \bigwedge_{i\neq j}z_i\neq z_j\bigwedge_{0\leq j<|\bar{z}|}\operatorname{Alg}_{\varphi,j}(\bar{x},\hat{z}_j).\]
\end{definition}
\begin{lemma}\label{lemalgH}
Let $(M,H)$ be an $H$-structure, $\varphi(\bar{x},\bar{z})$ an $\Le$-formula and $\varphi_{\bar{z},H}(\bar{x},\bar{z})$ be defined as in Definition \ref{Halg}.
Then $\varphi_{\bar{z},H}(\bar{a},H^{|\bar{z}|})$ is finite for any $\bar{a}\in M^{|\bar{x}|}$.
\end{lemma}
\begin{proof}
Suppose $n=|\bar{z}|$. Let $\bar{a}\in M^{|\bar{x}|}$ and assume $\varphi_{\bar{z},H}(\bar{a},h_1,\ldots,h_{n})$ holds for some $h_1,\ldots,h_n\in H$. Then, the elements $h_1,\ldots,h_n$ are all distinct, and for each $i\leq n$, $\operatorname{Alg}_{\varphi,i}(\bar{a},\widehat{h}_i)$ holds, so $h_i\in\acl(h_1,\ldots,h_{i-1},h_{i+1},\ldots,h_n,\bar{a})$. Thus, by Corollary \ref{triviality-algclosure-H} we have $h_i\in \operatorname{HB}(\bar{a})$ for every $1\leq i\leq n$, and thus $\varphi_{\bar{z},H}(\bar{a},H^n)$ has size at most $|\operatorname{HB}(\bar{a})|^n$, which is finite.
\end{proof}

In the following, we will use existential $H$-formulas to define $\operatorname{HB}(\bar{a})$ for a given tuple $\bar{a}$. Since $\operatorname{HB}(\bar{a})$ is a set and not a tuple, we may need to consider all permutations of any enumeration of $\operatorname{HB}(\bar{a})$.

\begin{notation} For $k\geq 1$, we write $\mathbb{S}_k$ for the permutation group on $k$ elements, and whenever $\bar{z}=(z_1,z_2,\ldots,z_k)$ is a tuple of variables and $\sigma\in\mathbb{S}_k$, we let $\sigma(\bar{z})$ denote the tuple $(z_{\sigma(1)},z_{\sigma(2)},\ldots, z_{\sigma(k)})$. 

For a formula $\varphi(\bar{x},\bar{z})$ in a complete theory $T$, we say that $\varphi$ is \emph{invariant under permutations of $\bar{z}$} if \[T\models \forall\bar{x}\forall \bar{z}\bigwedge_{\sigma\in\mathbb{S}_{|\bar{z}|}}\Big{(}\varphi(\bar{x},\bar{z})\leftrightarrow \varphi(\bar{x},\sigma(\bar{z}))\Big{)}.\]
\end{notation}

\bd\label{mainformula} 
Let $(M,H)$ be a sufficiently saturated model of $T^{ind}$. Let $\bar{c}\in M$ be $H$-independent
and let $p(\bar{x})$ be a complete $\mathcal{L}_H$-type of $SU$-rank $\omega\cdot r+k$ over $\bar{c}$. Our goal is to define an existential $H$-formula $\exists\bar{z}\in H^k\Psi_p(\bar{x},\bar{z},\bar{c})$ which witnesses the $SU$-rank of $p$ over $\bar{c}$.

Let $\bar{a}\models p(\bar{x})$. By Proposition \ref{rankomega}, $\dim(\bar{a}/H\bar{c})=r$ and $|\operatorname{HB}(\bar{a}/\bar{c})|=k$.
Choose $\bar{a}_1=(a_{i_1},\ldots,a_{i_r})$ which is maximal independent subtuple of $\bar{a}$ over $H\bar{c}$ and let $\bar{a}_2=(a_{j_1},\ldots,a_{j_t})$ be the rest of the tuple. Also let $\bar h=(h_1,\dots,h_{k})$ be an enumeration of $\operatorname{HB}(\bar{a}/\bar{c})$. Then $\bar{a}_2\in \acl(\bar{a}_1,\bar h,\bar{c})$ (as $\bar{c}$ is $H$-independent and $\bar{a}\ind_{\bar{h},
\bar{c}} H$ by definition of $\operatorname{HB}(\bar{a}/\bar{c})$), so we can fix an $\mathcal{L}$-formula $\psi_{p}(\bar{x}_{2p},\bar{x}_{1p},\bar{z},\bar{y})$ with $\bar{x}_{1p}=(x_{i_1},\ldots,x_{i_r})$ and $\bar{x}_{2p}=(x_{j_1},\ldots,x_{j_t})$ 
such that \[M\models \psi_{p}(\bar{a}_{2},\bar{a}_{1},\bar h,\bar{c})\wedge \operatorname{Alg}_{\psi_p,\bar{x}_{2p}}(\bar{a}_1,\bar{h},\bar{c}).\] 

We can now consider the formula

\[\Psi_p(\bar{x},\bar{z},\bar{y}):=\bigvee_{\sigma\in\mathbb{S}_k}\left(\psi_{p}(\bar{x}_{2p},\bar{x}_{1p},\sigma(\bar{z}),\bar{y}))\land \operatorname{Alg}_{\psi_p,\bar{x}_{2p}}(\bar{x}_{1p},\sigma(\bar{z}),\bar{y}) \wedge \operatorname{Dim}_{\varphi,(\bar{x},\bar{z}),r+k}(\bar{y})\right)\]

Note that by construction, $\Psi_p$ only depends on the type $p$ and not on the choice of $\bar{a}\models p$ in the sense that whenever $\bar{a}'\models p$ and $\bar{h}'$ is any enumeration of the $\operatorname{HB}(\bar{a}'/\bar{c})$, we will have $(M,H)\models \Psi_p(\bar{a}',\bar{h}',\bar{c})$.
\ed

\begin{lemma}\label{unique-HB}
Let $(M,H)$ be an $H$-structure and let $\bar{c}$ be an $H$-independent tuple. Let $p(\bar{x})$ be a complete type over $\bar{c}$ of $SU$-rank $\omega\cdot n+k$. Let $\Psi_p$ be as described in Definition \ref{mainformula}. For any $\bar{a}$ and $\bar{c}'$ with 
$\operatorname{SU}(\bar{a}/\bar{c}')=\omega\cdot n+k$ and any $\bar{h}\in H^k$, if $(M,H)\models\Psi_p(\bar{a},\bar{h},\bar{c}')$, then $\bar{h}$ is an enumeration of $\operatorname{HB}(\bar{a}/\bar{c}',\operatorname{HB}(\bar{c}'))$. 

Moreover, if $(M,H)\models \exists \bar{z}\in H^k\Psi_p(\bar{d},\bar{z},\bar{c}')$ for some $\bar{d},\bar{c}'$, then $\operatorname{SU}(\bar{d}/\bar{c}')\leq \omega\cdot n+k$.
\end{lemma}

\begin{proof} Suppose $(M,H)\models \Psi_p(\bar{a},\bar{h},\bar{c}')$. Let $\psi_p(\bar{x}_{2p},\bar{x}_{1p},\bar{z},\bar{y})$ be the subformula of $\Psi_p$ as given by Definition \ref{mainformula} and let $\bar{a}_1,\bar{a}_2$ be subtuples of $\bar{a}$ partitioned according to $\bar{x}_{1p},\bar{x}_{2p}$. Then $|\bar{a}_1|=n$ and there is $\sigma\in\mathbb{S}_k$ such that $(M,H)\models \psi_p(\bar{a}_{2},\bar{a}_{1},\sigma(\bar{h}),\bar{c}')\land \Alg_{\psi_p,\bar{x}_{2p}}(\bar{a}_1,\sigma(h),\bar{c}')$ and hence $\bar{a}_{2}\in\acl(\bar{a}_{1},\sigma(\bar{h}),\bar{c}')$. Since $\operatorname{SU}(\bar{a}/\bar{c}')\geq \omega\cdot n$, we must have $\dim(\bar{a}_{1}/H,\bar{c}')=n$.
Therefore, $\bar{a}\ind_{\sigma(\bar{h}),\bar{c}'}H$ and $\bar{a}\ind_{\sigma(\bar{h}),\bar{c}',\operatorname{HB}(\bar{c}')}H$. Since $|\operatorname{HB}(\bar{a}/\bar{c}',\operatorname{HB}(\bar{c}'))|=k$ and $|\sigma(\bar{h})|\leq k$, we must have that $\sigma(\bar{h})$ is minimal. Hence, $\sigma(\bar{h})$ is an enumeration of $\operatorname{HB}(\bar{a}/\bar{c}',\operatorname{HB}(\bar{c}'))$, and $\bar{h}$ is another enumeration.

For the moreover part, suppose $(M,H)\models \Psi_p(\bar{d},\bar{h}',\bar{c}')$ for some $\bar{h}'$ in $H$. Note that $\dim(\bar{d},\bar{h}'/\bar{c}')\leq n+k$ and $\dim(\bar{d}/\bar{h}',\bar{c}')\leq n$ by the construction of $\Psi_p$. If $\dim(\bar{d}/H,\bar{c}')<n$, then $\operatorname{SU}(\bar{d}/\bar{c}')<\omega\cdot n$ and we are done. Otherwise $\dim(\bar{d}/H,\bar{c}')=n$, then $\dim(\bar{d}/H,\bar{c}')=\dim(\bar{d}/\bar{h},\bar{c}')$ and $\operatorname{HB}(\bar{d}/\operatorname{HB}(\bar{c}'),\bar{c}')\subseteq\{\bar{h}\}$. Hence $\operatorname{SU}(\bar{d}/\bar{c}')=\operatorname{SU}(\bar{d}/\bar{c}',\operatorname{HB}(\bar{c}'))\leq \omega\cdot n+k$.
\end{proof}

\begin{lemma}\label{lem-HB-base}
Let $(M,H)$ be an $H$-structure. Suppose $\bar{a}$ is a tuple with $\operatorname{SU}(\bar{a})=\omega\cdot n+k$, and let $p(\bar{x})=\tp_{\Le_H}(\bar{a})$. Then there is an $\mathcal{L}$-formula $\xi_{p}(\bar{x},\bar{z})$ such that the following hold:
\benum
\item If $\bar{h}$ is an enumeration of $\operatorname{HB}(\bar{a})$, then $(M,H)\models \xi_{p}(\bar{a},\bar{h})$.
\item For any $\bar{a}'$ with $\operatorname{SU}(\bar{a}')=\omega\cdot n+k$ and any $\bar{h}'\in H^k$, if $(M,H)\models \xi_{p}(\bar{a}',\bar{h}')$ then $\bar{h}'$ is an enumeration of $\operatorname{HB}(\bar{a}')$.
\item $\xi_{p}(\bar{x},\bar{z})$ is invariant under permutations of $\bar{z}$.
\item For all $\bar{d}\in M^{|\bar{a}|}$, the set $\xi_{p}(\bar{d},H^k)$ is finite.
\eenum
\end{lemma}
\begin{proof}
Let $p=p(\bar{x}):=\tp_{\Le_H}(\bar{a})$ and let $\Psi_p(\bar{x},\bar{z})$ be given as in Definition \ref{mainformula}. Let $\bar{h}=(h_1,\ldots,h_k)$ be any enumeration of $\operatorname{HB}(\bar{a})$, then $(M,H)\models \Psi_{p}(\bar{a},\bar{h})$.

Recall that $\hat{z}_j$ is a tuple of variables obtained by omitting $z_j$ from $\bar{z}$. Recall that there is an $\Le$-formula $\Alg_{\Psi_p,z_j}(\bar{x},\hat{z}_j)$ stating that $\Psi_p$ is an algebraic formula in the variable $z_j$.

\textbf{Claim:} \emph{For every $j=1,\ldots,k$, $\Alg_{\Psi_p,z_j}(\bar{a},h_1,\ldots,h_{j-1},h_{j+1},\ldots,h_{k})$ holds.}

Otherwise, $\Psi_p(\bar{a},h_1,\ldots,h_{j-1},M,h_{j+1},\ldots,h_{k})$ is infinite, and by the density property we have that
$\Psi_p(\bar{a},h_1,\ldots,h_{j-1},H,h_{j+1},\ldots,h_{k})$ is also infinite. However, $\Psi_p(\bar{a},H^k)$ is a set of enumerations of $\operatorname{HB}(\bar{a})$ by Lemma \ref{unique-HB}, a contradiction.

Let \[\xi_{p}(\bar{x},\bar{z}):=(\Psi_p)_{\bar{z},H}(\bar{x},\bar{z})=\Psi_p(\bar{x},\bar{z})\land\bigwedge_{1\leq j\leq k}z_i\neq z_j\bigwedge_{1\leq j\leq k}\Alg_{\Psi_p,z_j}(\bar{x},\hat{z}_j)\] be defined as in Definition \ref{Halg}. Since $\Psi_p$ is invariant under permutations of $\bar{z}$, so is $\xi_{p}$, and condition (3) is satisfied. Moreover, $(M,H)\models \bigwedge_{\sigma\in\mathbb{S}_k}\xi_{p}(\bar{a},\sigma(\bar{h}))$ by the Claim above and by condition (3). Hence condition (1) also holds.\\

Condition (2) follows from Lemma \ref{unique-HB} applied to the subformula $\Psi_p$ in $\xi_{p}$ and condition (4) follows from Lemma \ref{lemalgH}.

\end{proof}

\begin{remark}{\label{remSubHB}} Combining Corollary \ref{triviality-algclosure-H} with part (4) of the previous lemma, we know that for any $\xi_{p}$ defined as in Lemma \ref{lem-HB-base}, we have that for any tuple $\bar{d}$, $\xi_{p}(H^n,\bar{d})$ is a collection of different enumerations of subsets of $\operatorname{HB}(\bar{d})$, although the containment may be strict as the following example shows.
\end{remark}
\begin{example} \label{example-H-vs}

Let $T$ be the theory of infinite vector spaces over $\mathbb{F}_p$, which is strongly minimal and nowhere trivial. Let $(M,H)$ be a saturated model of $T^{ind}$ and choose $c_1$ independent from $H$ and $c_2=c_1+h$ for some $h\in H$. Let $p(x_1,x_2)=\tp_{\Le_H}(c_1,c_2)$. In this case, $\xi_p(z;x_1,x_2)\equiv x_2=x_1+z$ and $\xi_p(H,c_1,c_2):=\{h\}$. However, if $d_1=h_1$ is another element from $H$ and $d_2=h_1+h$ we would have $\xi_p(H,d_1,d_2)=\{h\}\subsetneq \{h_1,h\}=\operatorname{HB}(d_1,d_2)$.
\end{example}

Recall that the $SU$-rank of a formula $\varphi(\bar{x})$, which is defined over a set of parameters $\bar{c}$, 
is given by: $$\operatorname{SU}(\varphi):=\sup\{\operatorname{SU}(p): \varphi\in p,p\in S_{T^{ind}}(\bar{c})\}.$$

We will now prove that this supremum can be attained as a maximum in the theory $T^{ind}$ using the formula $\Psi_p$ as defined before.

\begin{lemma}\label{lem-continuitySU}
Let $(M,H)$ be a sufficiently saturated $H$-structure.
Let $X\subset M^n$ be $\Le_H$-definable over $\bar{c}$ where $\bar{c}$ is
$H$-independent. Then there is a type $p\in S_{T^{ind}}(\bar{c})$ such that $\operatorname{SU}(X/\bar{c})=\operatorname{SU}(p)$. That is, 
 \[\operatorname{SU}(X/\bar{c})=\max\{\operatorname{SU}(p): X\in p,p\in S_{T^{ind}}(\bar{c})\}.\]
\end{lemma}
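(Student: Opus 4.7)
The plan is to show that the set of $SU$-ranks of types in $S_{T^{ind}}(\bar{c})$ containing $X$ is a finite set of ordinals, so that its supremum is automatically attained as a maximum.

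Since $\bar{c}$ is $H$-independent, Proposition \ref{rankomega} applies to every type $p\in S_{T^{ind}}(\bar{c})$: choosing a realization $\bar{a}\in M$ (possible by saturation), we have
\[
\operatorname{SU}(p)=\omega\cdot m+k,\quad\text{where }m=\dim(\bar{a}/\bar{c}H(M))\text{ and }k=|\operatorname{HB}(\bar{a}/\bar{c})|.
\]

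Next I would verify that both $m$ and $k$ are bounded above by $n:=|\bar{x}|$. The bound $m\leq n$ is immediate, since $\dim$ of an $n$-tuple cannot exceed $n$. For $k\leq n$, let $\bar{h}=\operatorname{HB}(\bar{a}/\bar{c})$. By the defining property of the $H$-basis, $\bar{a}\ind_{\bar{c}\bar{h}}H(M)$, so $\dim(\bar{a}/\bar{c}\bar{h})=\dim(\bar{a}/\bar{c}H(M))$; minimality of $\bar{h}$ together with the exchange property of $\acl$ force each element of $\bar{h}$ to drop $\dim(\bar{a}/\bar{c}\,\cdot)$ by exactly $1$, yielding
\[
|\bar{h}|=\dim(\bar{a}/\bar{c})-\dim(\bar{a}/\bar{c}H(M))\leq\dim(\bar{a}/\bar{c})\leq n.
\]

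Combining these observations, the set $\{\operatorname{SU}(p):p\in S_{T^{ind}}(\bar{c}),\,X\in p\}$ is contained in the finite ordinal set $\{\omega\cdot m+k:0\leq m,k\leq n\}$. Since any non-empty subset of a finite set of ordinals admits a maximum, the supremum $\operatorname{SU}(X/\bar{c})$ is attained by some $p\in S_{T^{ind}}(\bar{c})$, as claimed.

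I do not anticipate any serious technical obstacle. The only real verification is the bound $|\operatorname{HB}(\bar{a}/\bar{c})|\leq n$, which is a direct consequence of the pregeometry exchange axiom applied to the minimality condition in the definition of the $H$-basis; once this is in hand, the lemma reduces to the elementary fact that a finite set of ordinals attains its supremum.
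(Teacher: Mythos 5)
Your reduction to ``a finite set of ordinals attains its supremum'' breaks down at the counting step: the bound $|\operatorname{HB}(\bar a/\bar c)|\leq n$ is false, and so is the identity $|\bar h|=\dim(\bar a/\bar c)-\dim(\bar a/\bar c H(M))$. The size of an $H$-basis is not controlled by the length of the tuple. For example, in an $H$-structure of a vector space take a single element $a=h_1+\dots+h_k$ with $h_1,\dots,h_k\in H$ distinct: then $\dim(a/\bar c)\leq 1$ while $\operatorname{HB}(a)=\{h_1,\dots,h_k\}$ has size $k$, so by Proposition \ref{rankomega} the rank $\operatorname{SU}(a)=k$ can be any natural number even though $n=1$. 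Minimality of the $H$-basis is a joint condition (the independence $\bar a\ind_{\bar c\bar h}H$ fails after deleting any single $h_i$); it does not force each $h_i$ to lower $\dim(\bar a/\bar c\,\cdot)$ by one --- in the example $h_1$ lowers it by $0$ and $h_2$ by $1$, so the total drop is $1$ while $|\bar h|=2$. Consequently the set $\{\operatorname{SU}(p): X\in p,\ p\in S_{T^{ind}}(\bar c)\}$ is a priori an infinite set of ordinals (already among $1$-types every element of $\mathbb{N}$ occurs), and the lemma is not the triviality that a nonempty finite set of ordinals has a maximum: the whole point is to exclude, say, a definable $X$ whose types have unbounded finite rank but none of rank at least $\omega$.

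That exclusion is exactly what the paper's compactness argument provides and what your proposal is missing. For each type $p$ containing the defining formula one constructs an existential $H$-formula $\Theta_p(\bar x)=\exists\bar z\in H^{k_p}\,\Psi_p(\bar x_{2p};\bar x_{1p},\bar z,\bar c)\in p$ as in Notation \ref{mainformula}, chosen so that \emph{every} realization of $\Theta_p$ has $\operatorname{SU}$-rank at most $\omega\cdot r+k_p=\operatorname{SU}(p)$; by compactness finitely many $\Theta_{p_1},\dots,\Theta_{p_N}$ cover $X$, whence $\operatorname{SU}(X)\leq\max_i\operatorname{SU}(p_i)$ and the supremum is attained by one of the $p_i$. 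Without a step of this kind (bounding the rank of all points of some formula in $p$, not just of $p$ itself), the gap cannot be closed, and in particular it cannot be repaired by any counting bound on the $H$-basis in terms of $|\bar x|$.
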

\begin{proof}
Let $\varphi(\bar{x},\bar{c})$ be an $\mathcal{L}_H$-formula defining $X$ and let $p(\bar{x})$ be a complete $\mathcal{L}_H$-type over $\bar{c}$ extending $\varphi(\bar{x},\bar{c})$. Let $\Psi_p(\bar{x},\bar{z}_p,\bar{c})$ be the $\Le$-formula defined as in Definition \ref{mainformula}, then $\Theta_p(\bar{x}):=\exists \bar{z}_p\in H^{|\bar{z}_p|}\Psi_p(\bar{x},\bar{z}_p,\bar{c})\in p(\bar{x})$. Hence $\operatorname{SU}(\Theta_p)\geq \operatorname{SU}(p)$. On the other hand,  by Lemma \ref{unique-HB}, for any $\bar b$ satisfying $\Theta_{p}(\bar{x})$, we have $\operatorname{SU}(\bar b/\bar{c})\leq \operatorname{SU}(p)$. Therefore, $\operatorname{SU}(\Theta_p)=\operatorname{SU}(p)$.

Notice that $\Theta_p(\bar{x})$ is an existential $H$-formula and $\{\Theta_p(\bar{x}):p\in S_{T^{ind}}(\bar{c}),X\in p\}$ is an open covering of $\varphi(\bar{x},\bar{c})$. By compactness, there are finitely many types $p_1,\ldots,p_N$ containing the formula $\varphi(\bar{x},\bar{c})$ and formulas $\Theta_{p_1}(\bar{x}),\ldots,\Theta_{p_N}(\bar{x})$ such that $\varphi(\bar{x},\bar{c})\models \bigvee_{i\leq N}\Theta_{p_i}(\bar{x})$. Hence, 
$$\operatorname{SU}(X)=\operatorname{SU}(\varphi(\bar{x},\bar{c}))\leq \operatorname{SU}\left(\bigvee_{i\leq N}\Theta_{p_i}(\bar{x})\right)=\max\{\operatorname{SU}(p_i):i\leq N\},$$ and since by definition we have $\varphi(\bar{x},\bar{c})\in p_i$ for each $i\leq N$, we also have $$\operatorname{SU}(X)=\operatorname{SU}(\varphi(\bar{x},\bar{c})):=\sup\{\operatorname{SU}(p): \varphi(\bar{x},{c})\in p,p\in S_{T^{ind}}(\bar{c})\}\geq \max\{\operatorname{SU}(p_i):i\leq N\}.$$ Therefore, $\displaystyle{\operatorname{SU}(\varphi(\bar{x},\bar{c}))=\max\{\operatorname{SU}(p): \varphi(\bar{x},\bar{c})\in p,p\in S_{T^{ind}}(\bar{c})\}= \operatorname{SU}\left(\bigvee_{i\leq N}\Theta_{p_i}(\bar{x})\right).}$ 
\end{proof}

\begin{lemma}\label{lem-almostExistential}
Let $(M,H)$ be a sufficiently saturated $H$-structure and let $\bar{c}$ be a tuple. Let $\bar{h}$ be an enumeration of $\operatorname{HB}(\bar{c})$. Let $X\subset M^m$ be $\mathcal{L}_H$-definable over $\bar{c}$ and $\operatorname{SU}(X/\bar{h},\bar{c})=\omega\cdot n+k$. Then there is
$Y\subset M^m$ defined by an existential $H$-formula $\exists\bar{z}\in H^k\eta(\bar{x},\bar{z},\bar{h},\bar{c})$, such that 
\begin{enumerate}
\item $\eta(\bar{x},\bar{z},\bar{t},\bar{y})$ is an $\mathcal{L}$-formula invariant under permutations of $\bar{z}$ and permutations of $\bar{t}$. And for any $\bar{a}',\bar{h}',\bar{c}'$, the set $\eta(\bar{a}',H^k,\bar{h}',\bar{c}')$ is finite.
\item There are finitely many complete types $p_1,\ldots,p_t$ of $SU$-rank $\omega\cdot n+k$ such that 
$$M\models \forall\bar{x}\forall\bar{z}\left( \eta(\bar{x} ,\bar{z},\bar{h},\bar{c})\to \bigvee_{i\leq t ,\tau\in\mathbb{S}_{|\bar{h}|}}\Psi_{p_i}(\bar{x},\bar{z},\tau(\bar{h}),\bar{c})\right),$$
where $\Psi_{p_i}(\bar{x},\bar{z},\bar{h},\bar{c})$ is defined as in Definition \ref{mainformula}.
\item
$\operatorname{SU}(Y\triangle X)<\omega\cdot n+k$.
\end{enumerate}
\end{lemma}
\brmk
It is important to make the formula $\eta$ invariant under permutations of $\bar{z}$. The reason is that in the next section, we will use $Y$ to define the measure for $X$ and $Y$ is the projection of the set  $\{(\bar{a},\bar{h}_a):\bar{h}_a\in H^{k},M\models\eta(\bar{a},\bar{h}_a,\bar{h},\bar{c})\}$, where each of the fibers is finite. When $\eta$ is invariant under permutations of $\bar{z}$, the size of the fibers is generically uniform. More precisely, we will see later that whenever $\bar{a}\in Y$ has maximal $SU$-rank and $\bar{h}_a$ is an enumeration of $\operatorname{HB}(\bar{a}/\bar{c},\bar{h})$, the fiber over $\bar{a}$ is of the form $\{(\bar{a},\sigma(\bar{h}_a)):\sigma\in\mathbb{S}_k\}$, hence the fiber has size $|\mathbb{S}_{k}|$ uniformly. The reason for making $\eta$ invariant under permutations of $\bar{t}$ is similar, and it will be essential for proving the Fubini condition for measures (see Theorem \ref{th-Fubini}).
\ermk

\begin{proof} Let us consider the set $E:=\{p\in S_{T^{ind}}(\bar{h},\bar{c}):X\in p,\operatorname{SU}(p)= \omega\cdot n+k\}$, which is non-empty by Lemma \ref{lem-continuitySU}. For every $p\in E$, we can consider the open set $\Theta_p(\bar{x}):=\exists\bar{z}\in H^k\Psi_p(\bar{x},\bar{z},\bar{h},\bar{c})$ where $\Psi_p$ is defined in Definition \ref{mainformula}, which contains $p$. Notice also that $E=\{p\in S_{T^{ind}}(\bar{h},\bar{c}):X\in p,\operatorname{SU}(p)\geq  \omega\cdot n+k\}$, so it is closed, and by compactness there are finitely many types $p_1,\ldots,p_N$ in $E$ such that $E$ is covered by $\Theta_{p_1}(\bar{x})\vee \cdots \vee \Theta_{p_N}(\bar{x})$. Note that 
$\operatorname{SU}(p_i)= \omega\cdot n+k$ for all $i\leq N$. 

Let $\chi_0(\bar{x},\bar{z},\bar{t},\bar{y}):=\bigvee_{i\leq N,\tau\in\mathbb{S}_{|\bar{h}|}}\Psi_{p_i}(\bar{x},\bar{z},\tau(\bar{t}),\bar{y}).$ Then $\chi_0$ is invariant under permutations of $\bar{z}$ and of $\bar{t}$ and the existential $H$-formula $\exists\bar{z}\in H^k\chi_0(\bar{x},\bar{z},\bar{h},\bar{c})$ covers $\Theta_{p_1}(\bar{x})\vee \cdots \vee \Theta_{p_N}(\bar{x})$, hence also covers $E$. Let $Y_0$ be the set defined by $\exists\bar{z}\in H^k\chi_0(\bar{x},\bar{z},\bar{h},\bar{c})$.
Then $\operatorname{SU}(Y_0)= \omega\cdot n+k$. Since all types $p$ with $X\in p$ and $\operatorname{SU}(p)= \omega\cdot n+k$ belong to $Y_0$, we get that $\operatorname{SU}(X\setminus Y_0)<\omega\cdot n+k$. However it maybe the case that $\operatorname{SU}(Y_0\setminus X)= \omega\cdot n+k$.

\textbf{Claim:} \emph{For any $q\ni Y_0\setminus X$ with $\operatorname{SU}(q)=\omega\cdot n+k$ there is
an $\Le$-formula $\chi_q(\bar{x},\bar{z},\bar{t},\bar{y})$ such that:
\benum
\item[(a)]
$\chi_q(\bar{x},\bar{z},\bar{t},\bar{y})$ is invariant under permutations of $\bar{z}$ and of $\bar{t}$.
\item[(b)] The set $Z_q$ defined as $\exists \bar{z}\in H^k(\chi_q(\bar{x},\bar{z},\bar{h},\bar{c}))$ extends $E$ and does not contain $q$.
\item[(c)] There are finitely many types $p_1,\ldots,p_{m_q}$ (which depend on $q$) contained in $E$ such that
\[M\models \forall \bar{x}\,\forall \bar{z}\left(\chi_q(\bar{x},\bar{z},\bar{h},\bar{c})\rightarrow\bigvee_{i\leq m_q,\tau\in\mathbb{S}_{|\bar{h}|}}\Psi_{p_i}(\bar{x},\bar{z},\tau(\bar{h}),\bar{c})\right).\]
\eenum}

    First, let us fix a type $p\in X$ with $\operatorname{SU}(p)= \omega\cdot n+k$ and choose realizations $\bar{a}\models p$ and $\bar{b}\models q$. Let  $\bar{h}_a,\bar{h}_b$ be enumerations of $\operatorname{HB}(\bar{a}/\bar{h},\bar{c}),\operatorname{HB}(\bar{b}/\bar{h},\bar{c})$ respectively. Note that both tuples $\bar{h}_a,\bar{h}_b$ have length $k$. Since $(\bar{h},\bar{c})$ is $H$-independent, by additivity of HB we have that both $(\bar{a},\bar{h}_a,\bar{h},\bar{c})$ and $(\bar{b},\bar{h}_b,\bar{h},\bar{c})$ are H-independent. Since $p=\tp_{\Le_H}(\bar{a}/\bar{h},\bar{c})\neq \tp_{\Le_H}(\bar{b}/\bar{h},\bar{c})=q$, by Lemma \ref{fundlem1} 
we get that $\tp_{\Le}(\bar{a},\bar{h}_a,\bar{h},\bar{c})\neq \tp_{\Le}(\bar{b},\bar{h}_b,\bar{h},\bar{c})$. Similarly, for any $\sigma\in\mathbb{S}_k$ and $\tau\in\mathbb{S}_{|\bar{h}|}$ , $\tp_{\Le}(\bar{a},\bar{h}_a,\bar{h},\bar{c})\neq \tp_{\Le}(\bar{b},\sigma(\bar{h}_b),\tau(\bar{h}),\bar{c})$, so there is an $\Le$-formula $\xi_{pq}^{\sigma\tau}(\bar{x},\bar{z},\bar{t},\bar{c})\in \tp_{\Le}(\bar{a},\bar{h}_a,\bar{h},\bar{c})\setminus \tp_{\Le}(\bar{b},\sigma(\bar{h}_b),\tau(\bar{h}),\bar{c})$. By taking conjunctions over all $\sigma,\tau$, we can find an $\Le$-formula $\xi_{pq}(\bar{x},\bar{z},\bar{t},\bar{c})$ such that for all $\sigma\in\mathbb{S}_k, \tau\in\mathbb{S}_{|\bar{h}|}$, $\xi_{pq}(\bar{x},\bar{z},\bar{t},\bar{c})\in \tp_{\Le}(\bar{a},\bar{h}_a,\bar{h},\bar{c})\setminus \tp_{\Le}(\bar{b},\sigma(\bar{h}_b),\tau(\bar{h}),\bar{c})$.\\

Let $\eta_{pq}(\bar{x},\bar{z},\bar{t},\bar{y}):=\bigvee_{\sigma\in\mathbb{S}_k,\tau\in\mathbb{S}_{|\bar{h}|}}\xi_{pq}(\bar{x},\sigma(\bar{z}),\tau(\bar{t}),\bar{y})$. Notice that $\eta_{pq}$ is invariant under permutations of $\bar{z}$ and of $\bar{t}$ (separately), and consider now the set $Z_{pq}$ defined by the existential $H$-formula \[\exists \bar{z}\in H^k\left(\eta_{pq}(\bar{x},\bar{z},\bar{h},\bar{c})\wedge \bigvee_{\tau\in\mathbb{S}_{|\bar{h}|}}\Psi_p(\bar{x},\bar{z},\tau(\bar{h}),\bar{c})\right).\] Clearly $Z_{pq}\in p=\tp_{\Le_H}(\bar{a}/\bar{h},\bar{c})$, since $M\models \xi_{pq}(\bar{a},\bar{h}_a,\bar{h},\bar{c})\land \Psi_p(\bar{a},\bar{h}_a,\bar{h},\bar{c})$. On the other hand, if $Z_{pq}\in q=\tp_{\Le_H}(\bar{b}/\bar{h},\bar{c})$, then there is a tuple $\bar{h}_b'\in H^k$ and $\sigma\in \mathbb{S}_k,\tau,\tau'\in\mathbb{S}_{|\bar{h}|}$ such that $M\models \xi_{pq}(\bar{b},\sigma(\bar{h}_b'),\tau(\bar{h}),\bar{c})\wedge \Psi_p(\bar{b},\bar{h}_b',\tau'(\bar{h}),\bar{c})$. By Lemma \ref{unique-HB}, this would imply that $\bar{h}_b'$ is an enumeration of $\operatorname{HB}(\bar{b}/\operatorname{HB}(\bar{c},\tau'(\bar{h})),\bar{c},\tau'(\bar{h}))=\operatorname{HB}(\bar{b}/\bar{h},\bar{c})$. By construction, $M\not\models\xi_{pq}(\bar{b},\sigma(\bar{h}_b'),\tau(\bar{h}),\bar{c})$, a contradiction.
Hence, $Z_{pq}$ is an existential $H$-formula that separates $p$ from $q$.

Note that the set $\{Z_{pq}:p\in E\}$ is an open cover of $E$ and does not contain $q$. Since $E$ is closed, we obtain again by compactness that there are types $p_{q,1},\ldots,p_{q,N_q}\in E$ such that 
\begin{align*}
Z_q&=\bigvee_{i=1}^{N_q} Z_{p_{q,i}q}=\bigvee_{i=1}^{N_q} [\exists \bar{z}\in H^k(\eta_{p_{q,i}q}(\bar{x},\bar{z},\bar{h},\bar{c})\wedge \bigvee_{\tau\in\mathbb{S}_{|\bar{h}|}}\Psi_{p_{q,i}}(\bar{x},\bar{z},\tau(\bar{h}),\bar{c}))]\\
&=\exists \bar{z}\in H^k\bigvee_{i=1}^{N_q}\left( \eta_{p_{q,i}q}(\bar{x},\bar{z},\bar{h},\bar{c})\wedge\bigvee_{\tau\in\mathbb{S}_{|\bar{h}|}} \Psi_{p_{q,i}}(\bar{x},\bar{z},\tau(\bar{h}),\bar{c})\right)=:\exists \bar{z}\in H^k \chi_{q}(\bar{x}, \bar{z}, \bar{h},\bar{c}).
\end{align*}
 is a single existential $H$-formula $Z_{q}$ that separates $E$ from $q$, as desired. \qed$_{\text{Claim}}$

Given a type $q\in Y_0\setminus X$ with $\operatorname{SU}(q)=\omega \cdot n+k$, consider the $\Le$-formula $\chi_{q}(\bar{x}, \bar{z}, \bar{h},\bar{c})$ given by the previous claim. Note that $\chi_{q}(\bar{x}, \bar{z}, \bar{t},\bar{y})$ is invariant under permutations of $\bar{z}$ and of $\bar{t}$ by construction. 
And $M\models\forall\bar{x}\forall\bar{z}\left(\chi_{q}(\bar{x}, \bar{z}, \bar{h},\bar{c}) \to \bigvee_{i\leq N_q,\tau\in\mathbb{S}_{|\bar{h}|}}\Psi_{p_{q,i}}(\bar{x},\bar{z},\tau(\bar{h}),\bar{c})\right)$ for some $N_q$, where the types $p_{q,i}$ all satisfy $\operatorname{SU}(p_{q,i}/\bar{h},\bar{c})=\omega \cdot n+k$.

Consider now the $\Le_H$-type-definable (closed) set \[E_1=\{q\in S_{\Le_H}(\bar{h},\bar{c})
:q\in Y_0\setminus X \text{\ \ and\ \ }\operatorname{SU}(q)\geq \omega\cdot n+k\}.\] The collection $\{Z_q^c: q\in E_1\}$ forms an open cover of $E_1$, where $Z_q^c$ is the complement of $Z_q$. By compactness there are finitely many  types $q_1,\ldots,q_s$ such that $E_1\subseteq Z_{q_1}^c\cup \cdots \cup Z_{q_s}^c$. Moreover, the sets $Z_{q_1},\ldots,Z_{q_s}$ extend $E$ and the formulas defining them have the form $\exists \bar{z}\in H^k \chi_{q_i}(\bar{x},\bar{z},\bar{h},\bar{c})$, for $\Le$-formulas $\chi_{q_i}(\bar{x},\bar{z},\bar{h},\bar{c})$, $i=1,\ldots,s$. 
To simplify the notation, for $1\leq i\leq s$, write $\chi_{i}(\bar{x},\bar{z},\bar{h},\bar{c}):=\chi_{q_i}(\bar{x},\bar{z},\bar{h},\bar{c})$, $p_{i,j}:=p_{q_i,j}$ and $N_i:=N_{q_i}$.

Let $Y':=Y_0\cap Z_{q_1}\cap\cdots \cap Z_{q_s}$ (and if $E_1=\emptyset$, we set $Y':=Y_0$).
Then $\operatorname{SU}(Y'\triangle X)<\omega\cdot n+k$. Recall that $\chi_0(\bar{x},\bar{z},\bar{h},\bar{c}):=\bigvee_{i\leq N,\tau\in\mathbb{S}_{|\bar{h}|}}\Psi_{p_i}(\bar{x},\bar{z},\tau(\bar{h}),\bar{c})$. Let $N_0:=N$ and $p_{0,j}:=p_j$ for $j\leq N_0$. Then $Y_0$ is defined by $\exists \bar{z}\in H^k\,\chi_0(\bar{x},\bar{z},\bar{h},\bar{c})$, and $Y'$ is defined by the conjunction 
\[\hspace{0.3cm}\bigwedge_{0\leq i\leq s}\exists \bar{z}\in H^k \chi_{i}(\bar{x},\bar{z},\bar{h},\bar{c}).\]

To finish the proof, we need two more steps. The first one is to make $Y'$ definable by an existential $H$-formula, in order to do this we will move the existential quantifier $\exists \bar{z}\in H^k$ out of the conjunction. The second step is to apply Lemma \ref{lemalgH} in order to satisfy condition (1) in the statement of the lemma.  \\

Define the $\Le$-formula $\varphi(\bar{x},\bar{z},\bar{t},\bar{y}):=\bigwedge_{0\leq i\leq s} \chi_{i}(\bar{x},\bar{z},\bar{t},\bar{y})$ and let 
\begin{align*}
\eta(\bar{x},\bar{z},\bar{t},\bar{y})&:=\varphi_{\bar{z},H}(\bar{x},\bar{z},\bar{t},\bar{y})\\
&=\varphi(\bar{x},\bar{z},\bar{t},\bar{y})\land\left(\bigwedge_{1\leq i<j\leq k}z_i\neq z_j\right)\land\left(\bigwedge_{1\leq j\leq k}\Alg_{\varphi,z_j}(\bar{x},\bar{y},\bar{t},\hat{z}_j)\right)
\end{align*}
be the formula given as in Definition \ref{Halg}.

Let $Y$ be the definable set  given by the existential $H$-formula $\exists \bar{z}\in H^k\eta(\bar{x},\bar{z},\bar{h},\bar{c})$.

\textbf{Claim:} The definable set $Y$ and the $\Le$-formula $\eta$ satisfies all the conditions of the lemma. 

Condition (1): By construction $\varphi(\bar{x},\bar{z},\bar{t},\bar{y})$ is invariant under permutations of $\bar{z}$ and $\bar{t}$ since $\{\chi_i,0\leq i\leq s\}$ are. And $\eta(\bar{a}',H^k,\bar{h}',\bar{c}')$ is finite for any $\bar{a}',\bar{h}',\bar{c}'$ by Lemma \ref{lemalgH}.

Condition (2): Note that for each $0\leq i\leq s$ there is $N_{i}$ such that 
\[(*)\quad M\models \forall\bar{x}\,\forall\bar{z}\left(\chi_{i}(\bar{x},\bar{z},\bar{h},\bar{c})\to\bigvee_{\ell\leq N_i,\tau\in\mathbb{S}_{|\bar{h}|}}\Psi_{p_{i,\ell}}(\bar{x},\bar{z},\tau(\bar{h}),\bar{c})\right).\]
Let $\{p_1,\ldots,p_t\}=\{p_{i,\ell}:0\leq i\leq s, \ell\leq N_i\}$, then condition (2) is satisfied.

Condition (3): We first show that $\operatorname{SU}(Y'\triangle Y)<\omega\cdot n+k$. We know that $Y\subset Y'$, so it remains to show that $\operatorname{SU}(Y'\setminus Y)<\omega\cdot n+k$. That is, it suffices to show that every $\bar{a}\in Y'$ with $\operatorname{SU}(\bar{a}/\bar{h},\bar{c})=\omega\cdot n+k$ belongs to $Y$.

Let $\bar a\in Y'$ with $\operatorname{SU}(\bar a/\bar{h},\bar{c})=\omega\cdot n+k$. There are $\bar h_i\in H^k$ such that $M\models \chi_{i}(\bar a,\bar h_i,\bar{h},\bar{c})$ for $0\leq i\leq s$. By $(*)$, there is $\ell\leq N_i$ and $\tau_i\in \mathbb{S}_{|\bar{h}|}$ such that $M\models \Psi_{p_{i,\ell}}(\bar a,\bar h_i,\tau_i(\bar{h}),\bar{c})$. Hence $\bar h_i$ is an enumeration of $\operatorname{HB}(\bar a/\bar{h},\bar{c})$ by Lemma \ref{unique-HB}. Let $\bar h_a=(h_{1,a},\ldots,h_{k,a})$ be a fixed enumeration of $\operatorname{HB}(\bar a/\bar{h},\bar{c})$. Then $\chi_{i}(\bar a,\bar{h}_a,\bar{h},\bar{c})$ holds, as $\chi_{i}(\bar x,\bar z,\bar{h},\bar c)$ is invariant under permutations of $
\bar z$, so $M\models \varphi(\bar{a},\bar{h}_a,\bar{h},\bar{c})$. Clearly, $M\models\bigwedge_{1\leq i<i'\leq k}h_{i,a}\neq h_{i',a}$. \\

Let $\hat{h}_{a,j}=(h_{1,a},\ldots,h_{j-1,a},h_{j+1,a},\ldots,h_{k,a}).$ We need to check $M\models\Alg_{\varphi,z_j}(\bar{a},\bar{c},\bar{h},\hat{h}_{a,j})$. Suppose not. Let $\varphi_j(z_j,\bar{a},\hat{h}_{a,j},\bar{h},\bar{c}):=\varphi(\bar{a},h_{1,a},\ldots,h_{j-1,a},z_j,h_{j+1,a},\ldots,h_{k,a},\bar{h},\bar{c})$. Then $\varphi_j(M,\bar{a},\hat{h}_{a,j},\bar{h},\bar{c})$ is infinite. Therefore, by the density property, $\varphi_j(H,\bar{a},\hat{h}_{a,j},\bar{h},\bar{c})$ is infinite. However by condition (2), any $\bar{h}'\in H^k$ satisfying $\varphi(\bar{a},\bar{h}',\bar{h},\bar{c})$ should also satisfy $\Psi_{p_i}(\bar{a},\bar{h}',\tau(\bar{h}),\bar{c})$ for some $\tau\in\mathbb{S}_{|\bar{h}|}$ and $p_i$ of $SU$-rank $\omega\cdot n+k$ over $\bar{h},\bar{c}$. Hence $\bar{h}'$ is enumeration of $\operatorname{HB}(\bar{a}/\bar{h},\bar{c})$ by Lemma \ref{unique-HB} and $\varphi_j(H,\bar{a},\hat{h}_{a,j},\bar{h},\bar{c})\subseteq \operatorname{HB}(\bar{a}/\bar{h},\bar{c})$ must be finite, a contradiction. Therefore, $M\models\eta(\bar{a},\bar{h},\bar{h},\bar{c})$ and $\bar{a}\in Y$.

In conclusion, since \[Y\triangle X=(Y\triangle Y')\triangle (Y'\triangle X)\subseteq (Y\triangle Y')\cup (Y'\triangle X),\] we get $\operatorname{SU}(Y\triangle X)=\max\{\operatorname{SU}(Y\triangle Y'),\operatorname{SU}(Y'\triangle X)\}<\omega\cdot n+k$.
\end{proof}

In the following, we will show definability of $SU$-rank. Note that in all the results we have obtained so far we are using that the definable sets are defined by formulas with parameters that are $H$-independent. To obtain definability of $SU$-rank, we need to be able to uniformly define the $H$-basis, which we have partially achieved in Lemma \ref{lem-HB-base}.

\begin{theorem}\label{thm-def-SU}

Let $(M,H)$ be a sufficiently saturated $H$-structure. Then for any $\mathcal{L}_H$-formula $\varphi(\bar{x};\bar{y})$ over $\emptyset$, there is a finite subset $D_\varphi=\{(n_i,k_i):i\leq N\}\subseteq \mathbb{N}\times \mathbb{N}$, $\Le_H$-formulas $\{\psi_i(\bar{y}):i\leq N\}$ over $\emptyset$ such that the following hold:
\begin{enumerate}
\item The formulas $\{\psi_i(\bar{y}):i\leq N\}$ form a cover of $M^{|\bar{y}|}$.
\item For any $i\leq N$ and $\bar{c}\in M^{|\bar{y}|}$, $(M,H)\models \psi_i(\bar{c})$ implies that $\operatorname{SU}(\varphi(\bar{x};\bar{c}))=\omega\cdot n_i+k_i$.
\end{enumerate}
\end{theorem}

\begin{proof} Let us first fix a tuple $\bar{c}\in M^{|\bar{y}|}$. Throughout this proof we will define several formulas that depend on $\tp(\bar{c})$, but to simplify the notation we will index them with $\bar{c}$ instead of the more accurate index $\tp(\bar{c})$. By Lemma \ref{lem-HB-base}, there is an $\Le$-formula $\xi_{\bar{c}}(\bar{t},\bar{y})$ invariant under permutations of $\bar{t}$ such that $M\models\xi_{\bar{c}}(\bar{h},\bar{c})$ for any enumeration $\bar{h}$ of $\operatorname{HB}(\bar{c})$ and $\xi_{\bar{c}}(H^{|\bar{t}|},\bar{c}')$ is finite for any $\bar{c}'$. (If $\operatorname{HB}(\bar{c})=\emptyset$, we set $\xi_{\bar{c}}(\bar{y}):=\bar{y}=\bar{y}$.) 

Suppose $\operatorname{SU}(\varphi(\bar{x};\bar{c}))=\omega\cdot n+k$. 
By Lemma \ref{lem-almostExistential}, there is an existential $\Le$-formula $\eta_{\bar{c}}(\bar{x},\bar{z},\bar{t},\bar{y})$ invariant under permutations of $\bar{z}$ and of $\bar{t}$, such that the following holds: 
Let $\bar{h}$ be a fixed enumeration of $\operatorname{HB}(\bar{c})$, then
\[\operatorname{SU}\left(\big(\exists \bar{z}\in H^k \eta_{\bar{c}}(\bar{x},\bar{z};\bar{h},\bar{c})\big)\triangle\varphi(\bar{x};\bar{c})\right)<\operatorname{SU}(\varphi(\bar{x};\bar{c})),\]  $\dim(\eta_{\bar{c}}(\bar{x},\bar{z};\bar{h},\bar{c}))=n+k$ and there are complete types $\{p_i:i\leq K_{\bar{c}}\}$ over $\bar{h},\bar{c}$ with $SU$-rank $\omega\cdot n+k$, such that
$$M\models \forall\bar{x}\forall\bar{z}\left(\eta_{\bar{c}}(\bar{x},\bar{z};\bar{h},\bar{c})\to\bigvee_{i\leq K_{\bar{c}},\tau\in\mathbb{S}_{|\bar{t}|}}\Psi_{p_i}(\bar{x},\bar{z};\tau(\bar{h}),\bar{c})\right).$$
Recall that, by the definability of dimension in the geometric structure $M$, there is an $\Le$-formula $\Dim_{\eta_{\bar{c}},(\bar{x},\bar{z}),n+k}(\bar{t},\bar{y})$ stating that $\dim(\eta_{\bar{c}}(\bar{x},\bar{z};\bar{t},\bar{y}))=n+k$. Since $\eta_{\bar{c}}(\bar{x},\bar{z};\bar{t},\bar{y})$ is invariant under permutations of $\bar{t}$, so is $\Dim_{\eta_{\bar{c}},(\bar{x},\bar{z}),n+k}(\bar{t},\bar{y})$.

Let $\phi_{\bar{c}}(\bar{x},\bar{h},\bar{c}):=\exists \bar z\in H^k \eta_{\bar{c}}(\bar{x},\bar{z},\bar{h},\bar{c})\triangle\varphi(\bar{x};\bar{c})$. Notice that 

\[\{\exists\bar{z}_q\in H^{k_q}\Psi_q(\bar{x},\bar{z}_q,\bar{h},\bar{c}): q(\bar{x})\in S_{T^{ind}}(\bar{h},\bar{c}), \phi_{\bar{c}}(\bar{x},\bar{h},\bar{c})\in q\}\] forms an open cover of $\phi_{\bar{c}}(\bar{x},\bar{h},\bar{c})$. Since $\operatorname{SU}(\phi_{\bar{c}}(\bar{x},\bar{h},\bar{c}))<\omega\cdot n+k$, all types $q$ considered in the cover above have SU-rank strictly less than $\omega\cdot n+k$. Thus, by compactness, there are finitely many types $\{q_j:j\leq N_{\bar{c}}\}$ with $\operatorname{SU}(q_j/\bar{h},\bar{c})<\omega\cdot n+k=\operatorname{SU}(\varphi(\bar{x},\bar{c}))$ such that 
$$(M,H)\models\forall \bar{x}\left(\phi_{\bar{c}}(\bar{x},\bar{h},\bar{c})\to \bigvee_{j\leq N_{\bar{c}}}\exists\bar{z}_j\in H^{k_j}\Psi_{q_j}(\bar{x},\bar{z}_j,\bar{h},\bar{c})\right).$$
In particular,
$$(M,H)\models\forall \bar{x}\left(\phi_{\bar{c}}(\bar{x},\bar{h},\bar{c})\to \bigvee_{j\leq N_{\bar{c}},\tau\in\mathbb{S}_{|\bar{t}|}}\exists\bar{z}_j\in H^{k_j}\Psi_{q_j}(\bar{x},\bar{z}_j,\tau(\bar{h}),\bar{c})\right).$$



Let us consider the formula
\begin{align*} 
\psi_{\bar{c}}'(\bar{t},\bar{y}):=&\xi_{\bar{c}}(\bar{t},\bar{y})\land\Dim_{\eta_{\bar{c}},(\bar{x},\bar{z}),n+k}(\bar{t},\bar{y})\\
& \wedge \forall\bar{x}\forall\bar{z}\left(\eta_{\bar{c}}(\bar{x},\bar{z};\bar{t},\bar{y})\to\bigvee_{i\leq K_{\bar{c}},\tau\in\mathbb{S}_{|\bar{t}|}}\Psi_{p_i}(\bar{x},\bar{z};\tau(\bar{t}),\bar{y})\right)\\
&\wedge \forall \bar{x}\left(\phi_{\bar{c}}(\bar{x},\bar{t},\bar{y})\to \bigvee_{j\leq N_{\bar{c}},\tau\in\mathbb{S}_{|\bar{t}|}}\exists\bar{z}_j\in H^{k_j}\Psi_{q_j}(\bar{x},\bar{z}_j,\tau(\bar{t}),\bar{y})\right).
\end{align*}
(In case $\omega\cdot n+k=0$, we set $\bigvee_{j\leq N_{\bar{c}},\tau\in\mathbb{S}_{|\bar{t}|}}\exists\bar{z}_j\in H^{k_j}\Psi_{q_j}(\bar{x},\bar{z}_j,\tau(\bar{t}),\bar{y}):=\bot$.)
Then $\psi'_{\bar{c}}(\bar{t},\bar{y})$ is invariant under permutations of $\bar{t}$ and $(M,H)\models\psi'_{\bar{c}}(\bar{h},\bar{c})$.
Let $\psi_{\bar{c}}(\bar{y}):=\exists\bar{t}\in H^{|\bar{t}|}\psi_{\bar{c}}'(\bar{t},\bar{y})$. 

\begin{claim}  \label{claim-main-inside} For any $\bar{c}'$ such that $(M,H)\models \psi_{\bar{c}}(\bar{c}')$ we have $$\operatorname{SU}(\varphi(\bar{x};\bar{c}'))=\omega\cdot n+k.$$
\end{claim}

We will prove this claim later. Let us conclude now the proof of the theorem assuming Claim \ref{claim-main-inside}.

Assume Claim \ref{claim-main-inside} holds. Then $\{\psi_{\bar{c}}(\bar{y}):\bar{c}\in M^{|\bar{y}|}\}$ is a cover of $M^{|\bar{y}|}$, and by compactness, there are finitely many formulas $\psi_1(\bar{y}):=\psi_{\bar{c}_1}(\bar{y}),\ldots,\psi_N(\bar{y}):=\psi_{\bar{c}_N}(\bar{y})$ that cover $M^{|\bar{y}|}$, and for each $i\leq N$ there are $(n_i,k_i)\in\mathbb{N}\times\mathbb{N}$ such that, whenever $(M,H)\models \psi_i(\bar{c}')$, we have $\operatorname{SU}(\varphi(\bar{x},\bar{c}'))=\omega\cdot n_i+k_i$. By taking $D_\varphi=\{(n_i,k_i):i\leq N\}$, the theorem follows. \qedhere$_{\text{Theorem \ref{thm-def-SU}}}$
\end{proof}

We show now an improved version of Claim \ref{claim-main-inside} which is also essential for determine the measure and dimension of $\varphi(\bar{x};\bar{c})$ in Section \ref{section:measure}.

\begin{claim}\label{claim-main}
We already know the following holds:
\begin{itemize}
    \item The $\Le_H$-formula $\psi_{\bar{c}}'(\bar{t},\bar{y})$ and the $\Le$-formula $\eta_{\bar{c}}(\bar{x},\bar{z},\bar{t},\bar{y})$ are invariant under permutations of $\bar{t}$ and $\eta_{\bar{c}}(\bar{x},\bar{z},\bar{t},\bar{y})$ is invariant under permutations of $\bar{z}$ as well.
    \item
   If $M\models \psi'_{\bar{c}}(\bar{h}',\bar{c}')$ for some $\bar{h}',\bar{c}'$, then \[M\models\forall\bar{x}\forall\bar{z}\left(\eta_{\bar{c}}(\bar{x},\bar{z};\bar{h}',\bar{c}')\to\bigvee_{i\leq K_{\bar{c}},\tau\in\mathbb{S}_{|\bar{t}|}}\Psi_{p_i}(\bar{x},\bar{z};\tau(\bar{h}'),\bar{c}')\right),\] where $\{p_i:i\leq K_{\bar{c}}\}$ are types over $(\bar{h},\bar{c})$ of $SU$-rank $\omega\cdot n+k$.
    \item
     By Lemma \ref{lem-HB-base} (2) and the fact that $\psi'_{\bar{c}}$ implies $\xi_{\bar{c}}$, we know that if $\bar{c}'$ has the same $SU$-rank as $\bar{c}$ and $M\models \psi'_{\bar{c}}(\bar{h}',\bar{c}')$ for some $\bar{h}'\in H^{|\bar{t}|}$, then $\bar{h}'$ is a permutation of $\operatorname{HB}(\bar{c}')$.
     
\end{itemize}

Moreover, we claim the following: whenever $\bar{c}',\bar{h}'$ are tuples such that $\psi'_{\bar{c}}(\bar{h}',\bar{c}')$ holds and $\bar{h}'\in H^{|\bar{t}|}$ (note that $\bar{h}'\subseteq \operatorname{HB}(\bar{c}')$ by Remark \ref{remSubHB} applied to the subformula $\xi_{\bar{c}}(\bar{t},\bar{y})$), we have 
\begin{enumerate}
\item $\operatorname{SU}(\exists\bar{z}\in H^k\eta_{\bar{c}}(\bar{x},\bar{z};\bar{h}',\bar{c}'))=\omega\cdot n+k$;
\item The definable set $\eta_{\bar{c}}(\bar{a},H^k;\bar{h}',\bar{c}')$ is finite for any tuple $\bar{a}$ and $$\operatorname{SU}(\exists\bar{z}\in H^k\eta_{\bar{c}}(\bar{x},\bar{z};\bar{h}',\bar{c}')\triangle\varphi(\bar{x};\bar{c}'))<\omega\cdot n+k;$$

\item For any $\bar{e},\bar{b}$ satisfying $\eta_{\bar{c}}(\bar{e},\bar{b},\bar{h}',\bar{c}')$ with $\dim(\bar{e},\bar{b}/\bar{c}',\operatorname{HB}(\bar{c}'))=n+k$, there are $\bar{a},\bar{d}$ such that $$\tp_{\mathcal{L}}(\bar{e},\bar{b}/\bar{c}',\operatorname{HB}(\bar{c}'))=\tp_{\mathcal{L}}(\bar{a},\bar{d}/\bar{c}',\operatorname{HB}(\bar{c}')),$$ and $\operatorname{SU}(\bar{a}/\bar{c}',\operatorname{HB}(\bar{c}'))=\omega\cdot n+k$ with $\bar{d}$ an enumeration of $\operatorname{HB}(\bar{a}/\bar{c}',\operatorname{HB}(\bar{c}'))$. 
\item
For any $\bar{a}$ satisfying $\varphi(\bar{a},\bar{c}')$ such that $\operatorname{SU}(\bar{a}/\bar{c}',\operatorname{HB}(\bar{c}'))=\omega\cdot n+k$, and for $\bar{d}\in H^{k}$, we have $\eta_{\bar{c}}(\bar{a},\bar{d},\bar{h}',\bar{c}')$ holds if and only if $\bar{d}$ is an enumeration of $\operatorname{HB}(\bar{a}/\bar{c}',\operatorname{HB}(\bar{c}'))$.
\end{enumerate}
\end{claim}

\begin{remark}
Let us show that Claim \ref{claim-main} implies Claim \ref{claim-main-inside}. 
Suppose $(M,H)\models\psi_{\bar{c}}(\bar{c}')$, then there is $\bar{h}'\in H^{|\bar{t}|}$ such that $\psi_{\bar{c}'}'(\bar{h}',\bar{c}')$ holds. By item (1) we have $\operatorname{SU}(\exists\bar{z}\in H^k\eta_{\bar{c}}(\bar{x},\bar{z};\bar{h}',\bar{c}'))=\omega\cdot n+k$, and by item (2),  
$\operatorname{SU}(\exists\bar{z}\in H^k\eta_{\bar{c}}(\bar{x},\bar{z};\bar{h}',\bar{c}')\triangle\varphi(\bar{x};\bar{c}'))<\omega\cdot n+k$. Therefore, $\operatorname{SU}(\varphi(\bar{x};\bar{c}'))=\omega\cdot n+k$ as desired. 
\end{remark}

\begin{proof}[Proof of Claim \ref{claim-main}]
Given $\bar{c}'$, let $\bar{h}'\in H^{|\bar{t}|}$  be
such that $\psi_{\bar{c}}'(\bar{h}',\bar{c}')$ holds. 
We need to show that items (1)-(4) hold.

We start by showing item (2):
By Lemma \ref{lem-almostExistential} (1), we have that $\eta_{\bar{c}}(\bar{a},H^k;\bar{h}',\bar{c}')$ is finite for any $\bar{a},\bar{h}',\bar{c}'$. 

By definition of $\psi_{\bar{c}}'(\bar{h}',\bar{c}')$, $$(M,H)\models\forall \bar{x}\left(\phi_{\bar{c}}(\bar{x},\bar{h}',\bar{c}')\to \bigvee_{j\leq N_{\bar{c}},\tau\in\mathbb{S}_{|\bar{t}|}}\exists\bar{z}_j\in H^{k_j}\Psi_{q_j}(\bar{x},\bar{z}_j,\tau(\bar{h}'),\bar{c}')\right),$$ where $\phi_{\bar{c}}(\bar{x},\bar{h}',\bar{c}')=\exists\bar{z}\in H^k\eta_{\bar{c}}(\bar{x},\bar{z};\bar{h}',\bar{c}')\triangle\varphi(\bar{x};\bar{c}')$ and $\Psi_{q_j}(\bar{x},\bar{z}_j,\bar{h},\bar{c})$ is defined as in Definition \ref{mainformula} and $\operatorname{SU}(q_j/\bar{h},\bar{c})<\omega\cdot n+k$. By Lemma \ref{unique-HB}, we have \[\operatorname{SU}(\exists\bar{z}_j\in H^{k_j}\Psi_{q_j}(\bar{x},\bar{z}_j,\tau(\bar{h}'),\bar{c}'))\leq \operatorname{SU}(q_j/\bar{h},\bar{c})<\omega\cdot n +k.\] Hence, $\operatorname{SU}\left((\exists\bar{z}\in H^k\eta_{\bar{c}}(\bar{x},\bar{z};\bar{h}',\bar{c}'))\triangle\varphi(\bar{x};\bar{c}')\right)<\omega\cdot n+k$ over $\bar{c}',\operatorname{HB}(\bar{c}')$ (as $\bar{h}'\subseteq\operatorname{HB}(\bar{c}')\subseteq \acl_{\Le_H}(\bar{c}')$). 

Now we show item (3) of the claim. Suppose $M\models \eta_{\bar{c}}(\bar{e},\bar{b},\bar{h}',\bar{c}')$ and $\dim(\bar{e},\bar{b}/\bar{c}',\operatorname{HB}(\bar{c}'))=n+k$. By definition of $\psi_{\bar{c}}'(\bar{h}',\bar{c}')$, there is $p_i$ and $\tau\in\mathbb{S}_{|\bar{t}|}$ such that 
$M\models \Psi_{p_i}(\bar{e},\bar{b};\tau(\bar{h}'),\bar{c}')$.
By definition of $\Psi_{p_i}$, we have $\dim(\bar{e}/\bar{b},\bar{h}',\bar{c}')\leq n$, and since $\bar{h}'\subseteq HB(\bar{c}')$, we have $\dim(\bar{e}/\bar{b},\bar{c}',\operatorname{HB}(\bar{c}'))\leq n$. Since $|\bar{b}|=k$, we must have $\dim(\bar{b}/\bar{c}',\operatorname{HB}(\bar{c}'))=k$
and $\dim(\bar{e}/\bar{b},\bar{c}',\operatorname{HB}(\bar{c}'))=n$. 
Consider $\tp_{\mathcal{L}}(\bar{b}/\bar{c}',\operatorname{HB}(\bar{c}'))$, which has dimension $k$. By the density property in $H$-structures, we can choose $\bar{d}\in H^k$ such that 
$$\tp_{\mathcal{L}}(\bar{d}/\bar{c}',\operatorname{HB}(\bar{c}'))=\tp_{\mathcal{L}}(\bar{b}/\bar{c}',\operatorname{HB}(\bar{c}')).$$

Suppose $\bar{e}=\bar{e}_1\bar{e}_2$, where $|\bar{e}_1|=\dim(\bar{e}_1/\bar{b},\bar{c}',\operatorname{HB}(\bar{c}'))=n$ and 
$\bar{e}_2\in\acl(\bar{e}_1,\bar{b},\bar{c}',\operatorname{HB}(\bar{c}'))$.
Consider $q:=\tp_{\mathcal{L}}(\bar{e}_1/\bar{b},\bar{c}',\operatorname{HB}(\bar{c}'))$, it has dimension $n$. 
Let $q'$ be the type over $\bar{d},\bar{c}',\operatorname{HB}(\bar{c}')$ obtained by replacing $\bar{b}$ by $\bar{d}$. 
Since $\bar{b}$ and $\bar{d}$ has the same $\mathcal{L}$-type over $\bar{c}',\operatorname{HB}(\bar{c}')$, we also have $q'$ has dimension $n$.
By the extension property in $H$-structures, we can choose $\bar{a}_1\in M^n$ such that $\bar{a}_1\models q'$ and $\dim(\bar{a}_1/H,\bar{c}')=n$.
Let $\bar{a}_2$ be such that $$\tp_{\mathcal{L}}(\bar{a}_2,\bar{a}_1,\bar{d}/\bar{c}',\operatorname{HB}(\bar{c}'))=\tp_{\mathcal{L}}(\bar{e}_2,\bar{e}_1,\bar{b}/\bar{c}',\operatorname{HB}(\bar{c}'))$$ 
 and let $\bar{a}=\bar{a}_1\bar{a}_2$.
Then by construction, we have 
$$\tp_{\mathcal{L}}(\bar{a},\bar{d}/\bar{c}',\operatorname{HB}(\bar{c}'))=\tp_{\mathcal{L}}(\bar{e},\bar{b}/\bar{c}',\operatorname{HB}(\bar{c}')).$$

We claim that $\operatorname{SU}(\bar{a}/\bar{c}',\operatorname{HB}(\bar{c}'))=\omega\cdot n+k$. 
We know that
\[
(**)\quad\bar{a}\ind_{\bar{d},\bar{c}',\operatorname{HB}(\bar{c}')}H.
\]

We want to show that $\bar{d}=\operatorname{HB}(\bar{a}/\bar{c}',\operatorname{HB}(\bar{c}'))$. We only need to show that $\bar{d}$ is a minimal tuple in $H$ for which $(**)$ holds.
Suppose not, then there is $d_i\in\bar{d}=(d_1,\ldots, d_{k})$ such that 
$\bar{a}_{2}\in\acl(\bar{a}_{1},\bar{d},\bar{c}',\operatorname{HB}(\bar{c}'))\setminus\{d_i\}$. 
Thus, 
$$
\dim(\bar{a},\hat{d}_i/\bar{c}',\operatorname{HB}(\bar{c}'))=
\dim(\bar{a}_{1},\hat{d}_i/\bar{c}',\operatorname{HB}(\bar{c}'))=n+k-1.
$$
Note that by the construction of $\eta_{\bar{c}}$ in Lemma \ref{lem-almostExistential} (which uses Definition \ref{Halg}), we must have that 
$d_i\in \acl(\bar{a},\hat{d}_i,\bar{h}',\bar{c}').$
Therefore, 
\begin{align*}
\dim(\bar{a},\bar{d}/\bar{c}',\operatorname{HB}(\bar{c}'))=&\dim(d_i/\bar{a},\hat{d}_i,\bar{c}',\operatorname{HB}(\bar{c}'))
+\dim(\bar{a},\hat{d}_i/\bar{c}',\operatorname{HB}(\bar{c}'))\\
=&0+n+k-1=n+k-1.
\end{align*}
This contradicts that $\dim(\bar{a},\bar{d}/\bar{c}',\operatorname{HB}(\bar{c}'))=n+k$.

Now we can prove item (1):
By the definition of $\psi_{\bar{c}'}$, \[M\models \forall\bar{x}\forall\bar{z}\left(\eta_{\bar{c}}(\bar{x},\bar{z};\bar{h}',\bar{c}')\to\bigvee_{i\leq K_{\bar{c}},\tau\in\mathbb{S}_{|\bar{t}|}}\Psi_{p_i}(\bar{x},\bar{z};\tau(\bar{h}'),\bar{c}')\right),\] where $p_i$ are $\Le_H$-types of $SU$-rank $\omega\cdot n+k$.
Let $\bar{a}_0$ be such that $(M,H)\models \exists\bar{z}\in H^k\eta_{\bar{c}}(\bar{a}_0,\bar{z},\bar{h}',\bar{c}')$.
Therefore, $(M,H)\models \exists\bar{z}\in H^k\Psi_{p_i}(\bar{a}_0,\bar{z};\tau(\bar{h}'),\bar{c}')$ for some $p_i$ and $\tau$.
Then by Lemma \ref{unique-HB}, we have $\operatorname{SU}(\bar{a}_0/\tau(\bar{h}'),\bar{c}')\leq \omega\cdot n+k$.
We conclude \[\operatorname{SU}(\exists\bar{z}\in H^k\eta_{\bar{c}}(\bar{x},\bar{z};\bar{h}',\bar{c}'))\leq \omega\cdot n+k.\]
We only need to show $\operatorname{SU}(\exists\bar{z}\in H^k\eta_{\bar{c}}(\bar{x},\bar{z};\bar{h}',\bar{c}'))\geq \omega\cdot n+k$.

Since $\Dim_{\eta_{\bar{c}},\bar{x},\bar{z},n+k}(\bar{h}',\bar{c}')$ holds by definition of $\psi_{\bar{c}}'(\bar{h}',\bar{c}')$, we have $\dim(\eta_{\bar{c}}(\bar{x},\bar{z};\bar{h}',\bar{c}'))=n+k$. 
By saturation of $M$, there are $\bar{e},\bar{b}$ be such that 
$M\models \eta_{\bar{c}}(\bar{e},\bar{b},\bar{h}',\bar{c}')$ and $\dim(\bar{e},\bar{b}/\bar{c}',\operatorname{HB}(\bar{c}'))=n+k$. By item (3), there are $\bar{a},\bar{d}$ such that $\bar{d}\in H^k$, $\operatorname{SU}(\bar{a}/\bar{c}',\operatorname{HB}(\bar{c}'))=\omega\cdot n+k$ and $M\models\eta_{\bar{c}}(\bar{a},\bar{d},\bar{h}',\bar{c}')$. Hence $\operatorname{SU}(\exists\bar{z}\in H^k\eta_{\bar{c}}(\bar{x},\bar{z};\bar{h}',\bar{c}'))\geq \operatorname{SU}(\bar{a}/\bar{c}',\operatorname{HB}(\bar{c}'))=\omega\cdot n+k$ and we are done.

Now we prove item (4) of the claim. 
Suppose $(M,H)\models \varphi(\bar{a},\bar{c}')$ and $\operatorname{SU}(\bar{a}/\bar{c}',\operatorname{HB}(\bar{c}'))=\omega\cdot n+k$. 
Since $$\operatorname{SU}(\exists\bar{z}\in H^k\eta_{\bar{c}}(\bar{x},\bar{z};\bar{h}',\bar{c}')\triangle\varphi(\bar{x};\bar{c}'))<\omega\cdot n+k,$$ 
we must have $M\models \eta_{\bar{c}}(\bar{a},\bar{d}';\bar{h}',\bar{c}')$ for some $\bar{d}'\in H^k$.
By assumption, there is some $p_i$ complete type of $SU$-rank $\omega\cdot n+k$ over $\bar{h},\bar{c}$ and $\tau\in\mathbb{S}_{|\bar{t}|}$ such that 
$M\models\Psi_{p_i}(\bar{a},\bar{d}';\tau(\bar{h}'),\bar{c}')$.
By Lemma \ref{unique-HB}, $\bar{d}'$ is an enumeration of $\operatorname{HB}(\bar{a}/\bar{c}',\tau(\bar{h}'),\operatorname{HB}(\bar{c}',\tau(\bar{h}')))=\operatorname{HB}(\bar{a}/\bar{c}',\operatorname{HB}(\bar{c}'))$.
On the other hand, if $\bar{d}$ be an enumeration of $\operatorname{HB}(\bar{a}/\bar{c}',\operatorname{HB}(\bar{c}'))$, 
then it is a permutation of $\bar{d}'$.
By construction, the formula $\eta_{\bar{c}}(\bar{x},\bar{z};\bar{h}',\bar{c}')$ is invariant under permutations of $\bar{z}$, therefore, $M\models \eta_{\bar{c}}(\bar{a},\bar{d};\bar{h}',\bar{c}')$.
\end{proof}

\bc Let $T$ be a nowhere trivial SU-rank 1 theory. Then $T^{ind}$ eliminates $\exists^{\infty}$.
\ec
\bdem 
Theorem \ref{thm-def-SU} establishes that the $SU$-rank in $T^{ind}$ is definable, in the sense that given a formula $\varphi(\bar{x},\bar{y})$ the relation $\operatorname{SU}(\varphi(\bar{x},\bar{a}))=\omega\cdot n+k$ is a definable condition on $\bar{a}$. In particular, when $n=0$ and $k=0$, this yields elimination of $\exists^{\infty}$.
\edem

\section{Comparing two notions of dimension}\label{section:comparing-dimensions}

Let $T$ be the theory of an infinite ultraproduct $M=\prod_{\U}M_n$ of finite $\Le$-structures in a 1-dimensional asymptotic class. Note that $T$ is an $SU$-rank one theory, thus geometric. Suppose furthermore that $T$ is nowhere trivial. We will study the pseudofinite $H$-structure $(M,H)=\prod_{\U}(M_n,H_n)$ built in \cite{Zou}. For each $X\subset M^m$ definable in the extended language $\mathcal{L}_H$, we will consider two counting dimensions associated to the ultraproduct which correspond to the pseudofinite coarse dimensions with respect to $M$ and $H$. Formally, we define $\delta_H(X)=\lim_{n,\U}\log |X(M_n)|/\log |H_n|$ and let $\delta_M(X)=\lim_{n,\U}\log |X(M_n)|/\log |M_n|$.
Note that we may build $(M,H)$ in such a way that $\delta_M(H)=0$, and equivalently, $\delta_H(M)=\infty$ (see Remark 2.9 of \cite{Zou}).

The goal of this section is to show that, $\delta_M(X)=k$ corresponds to $\operatorname{SU}(X)=\omega\cdot k+\ell$ for some $\ell\in\mathbb{N}$.
We also show that, under extra
assumptions, $\delta_H(X)=n$ corresponds to $\operatorname{SU}(X)=n$. 

\begin{lemma}\label{prop-deltaH}
Let $X\subset M^m$ be a definable set defined over an $H$-independent tuple $\bar{c}$. Assume that $\operatorname{SU}(X)=n$. Then $\delta_H(X)\leq n$.
\end{lemma}

\begin{proof}  Let $q(\bar{x})$ be a complete type over $\bar{c}$ extending $X$ and let $\bar{b} \models q(\bar{x})$. Then $\operatorname{SU}(q)\leq\operatorname{SU}(X)=n$.
Let $\Psi_q(\bar{x},\bar{z})$ be the $\Le$-formula over parameters in $\bar{c}$ defined as in Definition $\ref{mainformula}$. Then $|\bar{z}|=\operatorname{SU}(q)\leq n$, $\exists\bar{z}\in H^{|\bar{z}|}\Psi_q(\bar{x},\bar{z})\in q(\bar{x})$ and $M\models \forall \bar{z} \exists^{\leq k_q} \bar{x} \Psi_q(\bar{x};\bar{z})$ for some $k_q\in\mathbb{N}$. 
By compactness, there are finitely many formulas $\{\Psi_{q_i}(\bar{x};\bar{z}_i): 1\leq i\leq \ell\}$ and a single $k$ such that \[X\subseteq \bigcup_{1\leq i\leq \ell}\exists \bar{z}_i\in H^{|\bar{z}_i|}\Psi_{q_i}(M^m;\bar{z}_i) \text{\hspace{0.5cm}and\hspace{0.5cm}}M\models \forall \bar{z_i}\exists^{\leq k}\bar{x}(\Psi_{q_i}(\bar{x},\bar{z}_i))\] for $i=1,\ldots,\ell$. 

Then,
\begin{align*}
\delta_H(X)&\leq \delta_H\left(\bigcup_{1\leq i\leq \ell}\exists \bar{z}_i\in H^{|\bar{z}_i|}\Psi_{q_i}(M^m;\bar{z}_i) \right)\\
&= \max_{1\leq i\leq \ell}\left\{\delta_H\big(\exists \bar{z}_i\in H^{|\bar{z}_i|}\Psi_{q_i}(M^m;\bar{z}_i)\big) \right\}\leq \delta_H(H^{|\bar{z}_i|})\leq \delta_H(H^n)= n.\qedhere
\end{align*}
\end{proof}

To proof the converse we will need an extra assumption. By \cite[Proposition 3.5]{BeVa2}, the definable subsets of $H^n$ are just the traces of $\mathcal{L}$-formulas with parameters in $M$. Thus we expect that, whenever $Y\subset H^n$ has $SU$-rank $n$ and $Y=Y_0\cap H^n$ for an $\Le$-definable subset $Y_0\subseteq M^n$ of dimension $n$, the size of $|Y|/|H^n|$ should be roughly that of  $|Y_0|/|M^n|$. Note that the standard part of $|Y_0|/|M^n|$ is the measure $\mu$ of $Y_0$, which is positive since $Y_0$ has dimension $n$.\footnote{Although it is natural to expect the size of definable subsets of $H^n$ to satisfy this property, it can be difficult to prove it in general. For example, if $M$ is a pseudofinite field and $Y_a$ is the set defined by $\exists z(z^2=x-a)$, we need $H\cap Y_a$ to have size approximately $\frac{1}{2}|H|$ for all $a\in M$. This appears to be a non-trivial condition in the construction of $H$ carried in \cite{Zou}.} 

\textbf{Assumption $(\star)$}: whenever $Y\subset H^n$ is $\mathcal{L}_H$-definable of $SU$-rank $n$, there is $\mu_0\in\mathbb{R}^{>0}$ such that $|Y|\geq \mu_0|H|^n$. 

\begin{lemma}
Let $X\subset M^m$ be definable over an $H$-independent tuple of parameters $\bar{c}$. Assume that $\operatorname{SU}(X)=n$ and that the condition $(\star)$ holds. Then $\delta_H(X)\geq n$.
\end{lemma}

\begin{proof} By Claim \ref{claim-main} inside the proof of Theorem \ref{thm-def-SU}, we can find a set $X_1$ defined by a formula of the form $\exists \bar{h}\in H^n\eta_{\bar{c}}(\bar{x},\bar{h})$ such that $\operatorname{SU}(X\triangle X_1/\bar{c})<n$, where $\eta_{\bar{c}}$ is an $\mathcal{L}$-formula such that $\eta_{\bar{c}}(\bar{a},H^n)$ is finite for any $\bar{a}\in M^{m}$ and $\eta_{\bar{c}}(M^m,\bar{d})$ is finite for any $\bar{d}\in M^{|\bar{d}|}$. By the previous Lemma, $\delta_H(X_1\triangle X)<n$ and $\delta_H(X)\leq n$. If we show that $\delta_H(X_1)\geq n$, then 
$\delta_H(X)=\delta_H(X_1\triangle(X_1\triangle X))\geq \delta_H(X_1\setminus (X_1\triangle X))\geq n$. Thus it suffices to prove $\delta_H(X_1)\geq n$. 
Let $Y:=\exists \bar{x} \eta_{\bar{c}}(\bar{x},H^n)$. Clearly $Y\subseteq H^n$ is $\Le_H$-definable. Since $\operatorname{SU}(X_1)=n$ and $\eta_{\bar{c}}(\bar{a},H^n)$ is finite and non-empty for any $\bar{a}\in X_1$, we know that $\SU(Y)=n$.  By condition $(\star)$, we get 
$|Y|\geq \mu_0|H|^n$ for some $\mu_0>0$. On the other hand, by compactness and $\omega$-saturation of $(M,H)$, there is $k$ such that $|\eta_{\bar{c}}(\bar{a},H^n)|\leq k$ for any $\bar{a}\in M^{m}$. Hence, $k|X_1|\geq \mu_0|H|^n$ and we have

\[\delta_H(X_1)= \lim_{n,\mathcal{U}}\dfrac{\log|X_1|}{\log|H|}\geq \lim_{n,\mathcal{U}}\dfrac{n\log|H|+\log\left(\frac{\mu_0}{k}\right)}{\log |H|}=n,\] as desired.
\end{proof}

Now we discuss the coarse dimension $\delta_M$, for which we will \textbf{not} require the assumption ($\star$).

\begin{proposition} \label{0-1dim} Let $\varphi(x,\bar{y})$ be an $\Le_H$-formula over $\emptyset$ with $x$ a single variable. Then there are $\Le_H$-formulas $\theta_0(\bar{y})$ and $\theta_1(\bar{y})$ over $\emptyset$ such that 
\begin{enumerate}
    \item 
    $\theta_0(M^{|\bar{y}|})$ and $\theta_1(M^{|\bar{y}|})$ form a partition of $M^{|\bar{y}|}$;
    \item
    For any $\bar{b}\in M^{|\bar{y}|}$ and $i\in\{0,1\}$ we have \[\delta_M(\varphi(M;\bar{b}))=i \text{ if and only if }(M,H)\models\theta_i(\bar{b}).\]
    \item
   $(M,H)\models \theta_0(\bar{b})$ if and only if $\operatorname{SU}(\varphi(x;\bar{b}))<\omega$ for any $\bar{b}\in M^{|\bar{y}|}$.
\end{enumerate}
\end{proposition}

\begin{proof}
By Theorem \ref{thm-def-SU}, and the fact that $x$ is a single variable, there is a finite set $\{\alpha_i:i\leq N\}$ of ordinals with $\alpha_i\leq \omega$, and there are $\Le_H$-formulas $\{\psi_i(\bar{y}):i\leq N\}$ over $\emptyset$ such that
    $\{\psi_i(M^{|\bar{y}|}):i\leq N\}$ is a cover of $M^{|\bar{y}|}$ and
    for any $\bar{b}\in M^{|\bar{y}|}$ and $i\leq N$, we have $(M,H)\models\psi_i(\bar{b})$ implies $\operatorname{SU}(\varphi(x,\bar{b}))=\alpha_i$.
Let $\theta_0(\bar{y}):=\bigvee_{i\leq N, \alpha_i<\omega}\psi_i(\bar{y})$ and $\theta_1(\bar{y}):=\bigvee_{i\leq N,\alpha_i=\omega}\psi_i(\bar{y})$. Clearly $\theta_0(M^{|\bar{y}|})$ and $\theta_1(M^{|\bar{y}|})$ form an partition of $M^{|\bar{y}|}$. If $(M,H)\models\theta_0(\bar{b})$, then $\operatorname{SU}(\varphi(x,\bar{b}))<\omega$ and by Lemma \ref{prop-deltaH}, $\delta_H(\varphi(M,\bar{b}))<\infty$. So, since $\delta_M(H)=0$, we conclude that $\delta_M(\varphi(M,\bar{b}))=0$.\\

Otherwise, $(M,H)\models \theta_1(\bar{b})$, so $\operatorname{SU}(\varphi(x,\bar{b}))=\omega$. By Claim \ref{claim-main}, there are $\Le$-formulas $\psi'(\bar{t},\bar{y})$ and $\eta(x,\bar{t},\bar{y})$ (note that $\bar{z}=\emptyset$ since the $SU$-rank is $\omega$) such that $(M,H)\models\exists\bar{t}\in H^{|\bar{t}|}\psi'(\bar{t},\bar{b})$, and for any $\bar{h}\in H^{|\bar{t}|}$ with $M\models \psi'(\bar{h},\bar{b})$, we have 
 $\bar{h}\subseteq \operatorname{HB}(\bar{b})$ and
\[\operatorname{SU}(\eta(x,\bar{h},\bar{b})\triangle\varphi(x,\bar{b}))<\omega.\] 

As $\operatorname{SU}(\varphi(x,\bar{b}))=\omega$, the $\Le$-formula $\eta(x,\bar{h},\bar{b})$ has infinitely many realizations, so $\dim(\eta(x,\bar{h},\bar{b}))=1$ in the geometric theory $T$. Since $M$ is an ultraproduct of structures in a 1-dimensional asymptotic class, there is some $\mu>0$ such that $|\eta(M,\bar{h},\bar{b})|\geq \mu|M|$. Hence $\delta_M(\eta(M,\bar{h},\bar{b}))=1$. By the previous argument, since $\operatorname{SU}(\eta(x,\bar{h},\bar{b})\triangle\varphi(x,\bar{b}))<\omega$, we have $\delta_M(\eta(M,\bar{h},\bar{b})\triangle\varphi(M,\bar{b}))=0$. Notice that $\eta(M,\bar{h},\bar{b})$ is contained in the union $[\eta(M,\bar{h},\bar{b})\cap\varphi(M,\bar{b})]\cup [\eta(M,\bar{h},\bar{b})\triangle\varphi(M,\bar{b})]$, so $\delta_M(\eta(M,\bar{h},\bar{b})\cap\varphi(M,\bar{b}))=1$ and we conclude that $\delta_M(\varphi(M,\bar{b}))=1$.  
\end{proof}

By \cite[Corollary 2.1]{ZouDifference} (using induction on $|\bar{x}|$ and projections), Proposition \ref{0-1dim} can be generalized to formulas $\varphi(\bar{x},\bar{y})$ in several variables:


\begin{corollary}
Let $\varphi(\bar{x},\bar{y})$ be an $\mathcal{L}_H$-formula without parameters. For any $\bar{b}\in M^{|\bar{y}|}$, we have $$\delta_M(\varphi(M^{|\bar{x}|},\bar{b}))\in\{0,\cdots,|\bar{x}|\}.$$ And $\delta_M(\varphi(M^{|\bar{x}|},\bar{b}))=k$ if and only if $SU(\varphi(\bar{x},\bar{b}))=\omega\cdot k+\ell$ for some $\ell\in\mathbb{N}$.

Moreover, $$\{\bar{y}\in M^{|\bar{y}|}:\delta_M(\varphi(M,\bar{y}))=i\}$$ is $\mathcal{L}_H$-definable without parameters for each $i\in\{0,1,\cdots,|\bar{x}|\}$.
In particular, the coarse dimension $\delta_M$ is definable and additive.
\end{corollary}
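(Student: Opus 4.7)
The plan is to induct on $|\bar{x}|$, with Proposition \ref{0-1dim} serving as the base case $|\bar{x}|=1$. For the inductive step, write $\bar{x}=x_0\bar{x}'$ with $|\bar{x}'|=n$, and view $\varphi(x_0,\bar{x}',\bar{y})$ as an $\mathcal{L}_H$-formula whose object variable is $\bar{x}'$ and whose parameter variables are $x_0\bar{y}$. Applying the inductive hypothesis, for each $j\in\{0,1,\ldots,n\}$ the set
\[ Z_j:=\{(a,\bar{b})\in M^{1+|\bar{y}|}:\delta_M(\varphi(a,M^n,\bar{b}))=j\} \]
is $\mathcal{L}_H$-definable without parameters. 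Writing $Z_j(\bar{b}):=\{a\in M:(a,\bar{b})\in Z_j\}$, the sets $Z_0(\bar{b}),\ldots,Z_n(\bar{b})$ form a partition of $M$ (for each $\bar{b}$).

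Next I would decompose the fiber and count: in each finite structure $M_\ell$ of the asymptotic class,
\[|\varphi(M_\ell^{1+n},\bar{b})|=\sum_{j=0}^n\sum_{a\in Z_j(\bar{b})\cap M_\ell}|\varphi(a,M_\ell^n,\bar{b})|.\]
By the defining property of $Z_j$, the inner sums are essentially $|Z_j(\bar{b})\cap M_\ell|\cdot|M_\ell|^j$ up to subpolynomial error. Taking logarithms, dividing by $\log|M_\ell|$, and passing to the ultralimit, the finite triangle inequality $\log(\sum_j A_j)\sim\max_j\log A_j$ yields
\[\delta_M(\varphi(M^{1+n},\bar{b}))=\max_{0\leq j\leq n}\bigl(\delta_M(Z_j(\bar{b}))+j\bigr).\]
Since each $Z_j(\bar{b})\subseteq M$ is $\mathcal{L}_H$-definable in the single object variable $a$, Proposition \ref{0-1dim} gives $\delta_M(Z_j(\bar{b}))\in\{0,1\}$, and hence $\delta_M(\varphi(M^{1+n},\bar{b}))\in\{0,\ldots,n+1\}$.

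For the definability clause, I would express ``$\delta_M(\varphi(M^{1+n},\bar{b}))=k$'' as a finite disjunction over decompositions $k=j+\epsilon$ with $j\in\{0,\ldots,n\}$ and $\epsilon\in\{0,1\}$, of the conjunction ``$\delta_M(Z_j(\bar{b}))=\epsilon$ and $\delta_M(Z_{j'}(\bar{b}))+j'\leq k$ for every $j'\leq n$''. Each conjunct is $\mathcal{L}_H$-definable without parameters: $Z_j$ itself is $\mathcal{L}_H$-definable by the inductive hypothesis, and the condition $\delta_M(Z_j(\bar{b}))=\epsilon$ is then $\mathcal{L}_H$-definable by Proposition \ref{0-1dim} applied to the defining formula of $Z_j$. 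The additivity statement at the end drops out of the displayed formula for $\delta_M(\varphi(M^{1+n},\bar{b}))$ applied to any splitting of the variables.

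I expect the main subtlety to be making the ultraproduct counting rigorous: one must verify that ``$a\in Z_j(\bar{b})$'' forces $|\varphi(a,M_\ell^n,\bar{b})|$ to be comparable to $|M_\ell|^j$ \emph{uniformly} in $a$, not merely for each individual $a$. This uniformity follows because $Z_j$ is already cut out by an $\mathcal{L}_H$-formula valid in $\mathcal{U}$-almost every $M_\ell$, so on a set in $\mathcal{U}$ all fibres over $Z_j(\bar{b})\cap M_\ell$ have log-cardinality within $o(\log|M_\ell|)$ of $j\log|M_\ell|$; summing over $j$ and passing to the limit then gives the max formula, and the rest of the argument is formal.
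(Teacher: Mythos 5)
Your proposal is correct and is essentially the argument the paper only gestures at with ``by induction on fibers'' (citing Corollary 9 of \cite{ZouDifference}): base case Proposition \ref{0-1dim}, then a fiberwise count whose uniformity is justified exactly as you say, by \Los{} together with the fact that the inductive hypothesis makes the fiber-dimension sets $Z_j$ definable, yielding $\delta_M(\varphi(M^{1+n},\bar b))=\max_j(\delta_M(Z_j(\bar b))+j)$ and hence the range and definability claims. Your write-up in fact supplies more detail than the paper's one-line proof, the only cosmetic caveat being the bookkeeping for empty fibers (those $\bar a$ lie in no $Z_j$ but contribute nothing to the count).
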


Let $\bar{a}$ be a tuple in $(M,H)$ and $A$ be a countable subset of $M$. Suppose $\operatorname{SU}(\bar{a}/A)=\omega\cdot k+n$ for some $0\leq k\leq |\bar{a}|$ and $n\in\mathbb{N}$. Recall from Definition \ref{def-small} that $\ldim(\bar{a}/A)=k$ is the \emph{large dimension} of $\bar{a}$ over $A$.

\begin{theorem}
For any tuple $\bar{a}$ in $(M,H)$ and any countable subset $A\subseteq M$, we have $$\ldim(\bar{a}/A)=\delta_{M}(\bar{a}/A).$$
\end{theorem}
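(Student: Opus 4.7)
The plan is to prove the equality by induction on the length of $\bar{a}$, with the base case supplied by Lemma \ref{lem-elem} and the inductive step relying on the additivity of both dimensions.

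For the base case $|\bar{a}| = 1$, write $\bar{a} = a$ and note that $\ldim(a/A) \in \{0,1\}$. If $\ldim(a/A) = 0$, then $a \in \acl_{\mathcal{L}}(A \cup H(M))$, so $\operatorname{SU}_{T^{ind}}(a/A) < \omega$, and Lemma \ref{lem-elem} gives $\delta_M(a/A) = 0$. Conversely, if $\ldim(a/A) = 1$, then $a \notin \acl_{\mathcal{L}}(A \cup H(M))$, so $\operatorname{SU}_{T^{ind}}(a/A) = \omega$ by Proposition \ref{rankomega} (using that $T$ is nowhere trivial in this section's setting), and Lemma \ref{lem-elem} then yields $\delta_M(a/A) = 1$. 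So the two dimensions agree.

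For the inductive step, split $\bar{a} = \bar{a}' a$ with $|\bar{a}'| = |\bar{a}| - 1$. The key tools are:
\begin{enumerate}
\item additivity of the coarse pseudofinite dimension (from the Corollary preceding the theorem, extended to parameters $A$ via type-dimension as an infimum over formulas in $\tp_{\mathcal{L}_H}(\bar{a}/A)$), which gives
\[ \delta_M(\bar{a}/A) = \delta_M(\bar{a}'/A) + \delta_M(a/A\bar{a}'); \]
\item additivity of the large dimension in the pregeometry localized at $H(M)$, giving
\[ \ldim(\bar{a}/A) = \ldim(\bar{a}'/A) + \ldim(a/A\bar{a}'). \]
\end{enumerate}
Applying the inductive hypothesis to $\bar{a}'$ over $A$ and to the singleton $a$ over the countable set $A\bar{a}'$, each summand of the right-hand side of (1) coincides with the corresponding summand of the right-hand side of (2), so $\delta_M(\bar{a}/A) = \ldim(\bar{a}/A)$.

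The main obstacle I expect is justifying the parametric additivity of $\delta_M$ used in step (1): the Corollary only states additivity for formulas without parameters, so one must argue that extending it to types over a countable parameter set $A$ follows by applying the parameter-free additivity fibrewise to a formula $\varphi(\bar{x}', \bar{y}, \bar{c}) \in \tp_{\mathcal{L}_H}(\bar{a}' a /A)$ achieving the infimum, and then choosing the fibre dimensions correctly using the definability of the coarse dimension. This is analogous to standard arguments for pseudofinite dimension, and the definability/additivity result just proved makes it routine, but it is the only non-trivial point beyond the clean bookkeeping above.
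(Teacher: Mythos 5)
Your proof is correct and is essentially the paper's argument: the paper's one-line proof ("it follows from additivity of both $\ldim$ and $\delta_M$") implicitly relies on exactly the induction you spell out, with the singleton case anchored by Lemma \ref{lem-elem} and the inductive step by the additivity of $\delta_M$ (from the preceding corollary) and of $\ldim$. Your extra care about extending additivity of $\delta_M$ to types over a countable parameter set is a legitimate point the paper also glosses over, and your sketch of how to handle it is the standard one.
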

\begin{proof}
By Lemma \ref{0-1dim}, we have $\ldim(a/A)=\delta_{M}(a/A)$ for $a$ an element. The Theorem follows by additivity of both $\ldim$ and $\delta_M$.
\end{proof}

\section{Measure and dimension}\label{section:measure}

In this section, we will define notions of dimension and measure for $H$-structures coming from theories that are \emph{measurable} of $SU$-rank one. We first recall the definition of a measurable structure. The formulation here is slightly different (but equivalent) to Definition 5.1 in \cite{MS}. See the remark after the definition.

\bd \label{MS-measurable} A structure $\mathcal{M}$ is \emph{measurable}\footnote{In some parts of the literature these structures are called \emph{MS-measurable} to distinguish this concept from other notions of measure.} if there is a function $h:\operatorname{Def}(M)\to \mathbb{N}\times \mathbb{R}^{>0}\cup\{(0,0)\}$ (where we denote $h(X)=(\operatorname{dim}(X),\operatorname{meas}(X))$) satisfying the following conditions:
\benum
\item If $X$ is finite, then $h(X)=(0,|X|)$.
\item (Finitely many values) For every formula $\varphi(\bar{x},\bar{y})$ with $|\bar{x}|=n$, $|\bar{y}|=m$ there is a finite set $D_\varphi\subseteq \mathbb{N}\times \mathbb{R}^{>0}\cup\{(0,0)\}$ so that for all $\bar{a}\in M^m$, $h(\varphi(M^n;\bar{a}))\in D_\varphi$.
\item (Definability condition) For every $\emptyset$-definable formula $\varphi(\bar{x},\bar{y})$ and each $(d,\mu)\in D_\varphi$, the set $\{\bar{a}'\in M^m:h(\varphi(M^n,\bar{a}'))=(d,\mu)\}$ is $\emptyset$-definable.
\item (Finite additivity) Suppose $X$ is a disjoint union of definable sets $X_1$ and $X_2$, and $h(X_1)=(d_1,\mu_1)$, $h(X_2)=(d_2,\mu_2)$. Then \[h(X)=\begin{cases} (d_1,\mu_1) &\text{ if $d_1>d_2$};\\ (d_2,\mu_2) &\text{ if $d_2>d_1$};\\
(d_1,\mu_1+\mu_2) &\text{ if $d_1=d_2.$}
\end{cases}\]
\item (Fubini property) Suppose $f:X\to Y$ is a definable surjection, $h(Y)=(d,\mu)$ and for all $\bar{b}\in Y,$ we have $h(f^{-1}(\bar{b}))=(e,\nu)$. Then $h(X)=(e+d,\mu\nu)$.
\eenum
\ed
\brmk We now compare the two definitions of measurability. Notice that we are including the empty set as a measurable set by taking $h(\emptyset)=(0,0)$. Also, condition (ii) in \cite{MS} regarding the bound with the $D$-rank is superfluous, as it is in \cite[Corollary 3.6]{EM} that it follows from the definitions. Finally, condition (iv) of Definition 5.1 in \cite{MS} implies both conditions (4) and (5) in Definition 5.1 of the present paper, and vice versa.
\ermk

Due to the Definability Condition, it is easy to check that if $\mathcal{M}\equiv \mathcal{N}$ and $\mathcal{M}$ is measurable then $\mathcal{N}$ is also measurable. The main examples of measurable structures correspond to ultraproducts of \emph{1-dimensional asymptotic classes}. These are proved to be supersimple of $SU$-rank 1 (cf. \cite[Lemma 4.1]{MS}), so they are also geometric structures.

In \cite{Zou}, the third author showed that for the ultraproducts of 1-dimensional asymptotic classes, the corresponding $H$-structure is also pseudofinite. Hence, it is natural to ask whether the construction of $H$-structures also preserves measurability.

A preliminary answer is that it is not possible to preserve measurability if the underlying theory is not trivial: a consequence of Definition \ref{MS-measurable} (cf. \cite[Corollary 3.6]{EM}) is that if $\mathcal{M}$ is a measurable structure, then for every definable set $X$ the $D$-rank of $X$ is bounded above by $\dim(X)$. In particular, since $\dim(X)\in\mathbb{N}$, this shows that $\operatorname{Th}(\mathcal{M})$ must be a supersimple theory of finite $SU$-rank. Therefore, since the corresponding $H$-structure of an $SU$-rank 1 non-trivial theory will have $SU$-rank $\omega$, it would not be measurable.

However, there is a variation of measurability, intended to include some examples of supersimple theories of infinite rank, the so-called \emph{generalized measurable structures}. In these structures, the dimension and measure of definable sets are taken to be in an arbitrary ordered semi-ring, rather than in $(\mathbb{N}\times \mathbb{R}^{>0})\cup\{(0,0)\}$. This definition and the main properties of these structures, as well as several interesting examples and connections with asymptotic classes, will appear in the work \cite{AMSW} by S. Anscombe, D. Macpherson, C. Steinhorn and D. Wolf, currently in preparation. Even though we will not include their definition here, the main results of this section will essentially show that $H$-structures of measurable geometric structures are generalized measurable.

In other words, we will restrict to the study of $H$-structures of $SU$-rank 1 measurable structures, and show that there is an appropriate notion of dimension and measure for definable sets (taking values in $(\mathbb{N}\times \mathbb{N})\times \mathbb{R}$) that satisfies the appropriate analogs of the conditions given in Definition \ref{MS-measurable}: they are uniformly definable in terms of the parameters of the formulas and satisfy both finite additivity and the Fubini property. Moreover, there is a strong connection between the $SU$-rank and the dimension of definable sets.

The main idea is that dimension will be defined using data associated to the $SU$-rank (which in turn is related to large dimension and the size of an $H$-basis), while the measure will be induced by the measure on $\Le$-definable sets in the underlying $SU$-rank 1 structure.

Since we are dealing with structures that are both measurable and geometric, there are two possible notions of dimension that we can use. The first one comes from the fact that the structure $\mathcal{M}$ is measurable, and it is defined in terms of the function $h:\operatorname{Def}(M)\to \mathbb{N}\times \mathbb{R}^{>0}$. The second one comes from the fact that $\mathcal{M}$ has $SU$-rank 1, so it is geometric and there is a dimension given in terms of algebraic independence. 

\begin{definition}\label{Def-coherent} We say that an $\mathcal{L}$-structure $M$ is \emph{coherent measurable} if:
\benum
\item $\operatorname{Th}(M)$ is nowhere trivial,
\item $\operatorname{Th}(M)$ is measurable, and
\item For every $\Le$-formula $\varphi(\bar{x},\bar{y})$ and every $\bar{b}\in M^{|\bar{y}|}$, the dimension of $\varphi(M^{|\bar{x}|},\bar{b})$ (as a measurable structure) coincides with $SU(\varphi(M^{|\bar{x}|},\bar{b}))$.
\eenum
We call $M$ a \emph{one-dimensional coherent measurable} if $h(M)=(1,\mu)$ for some $\mu>0$ and $\operatorname{Th}(M)$ is nowhere trivial.
\end{definition}

Note that a one-dimensional coherent measurable structure is of $SU$-rank 1. It is coherent measurable by additivity of both the dimension and the $\operatorname{SU}$-rank. Moreover, the theory is geometric, and the geometric dimension of a definable set $X$ coincides with both the $SU$-rank and the dimension coming from the measurable structure. We will abuse notation and denote both of them as $\dim(X)$. Later on we will define also a measure for $\Le_H$-formulas that will coincide with the original measure when restricted to $\Le$-definable sets. We will refer to the later notion of measure as $\Le$-measure, when the distinction is necessary. Finally, we will need the technical assumption of being nowhere trivial in order to apply the results from Section \ref{section:dimension}.

 \begin{example} Every ultraproduct of a one-dimensional asymptotic class is a measurable structure of $SU$-rank 1, and the dimension of definable sets (as a measurable structure) coincides with the $SU$-rank. Moreover, if the resulting structure is a group, then it is also nowhere trivial. Hence, every ultraproduct of a 1-dimensional asymptotic class of groups or rings is a one-dimensional coherent measurable structure. These includes key examples such as pseudofinite fields or infinite vector spaces over finite fields. (cf. Example \ref{Example-H-VS2})
\end{example}

\begin{example} If $T$ is the theory of a random graph, then $T$ is the theory of an ultraproduct of a 1-dimensional asymptotic class. It is measurable of $SU$-rank 1, but the theory is trivial. So the random graph is not coherent measurable.  
\end{example}

\begin{example} In the language $\Le=\{P_n:n<\omega\}$ where each $P_n$ is a unary predicate, we can consider the finite $\Le$-structures $M_k$ with universe $\{1,2,\ldots,k^2\}$, where we interpret $P_n(M_k)$ to be $\{kn+1,\ldots,k (n+1)\}$ if $n< k$, and $P_n(M_k)=\emptyset$ otherwise. Any infinite ultraproduct $M=\prod_\mathcal{U}M_k$ models the theory $T$ of countably many infinite disjoint predicates. This theory has $SU$-rank 1 (although it is $\omega$-stable of Morley rank 2) so the theory is geometric with dimension 1.

We can also see $M$ as a measurable structure: for instance, for each $n<\omega$ we can define $\dim(P_n(M))=1$ and $\operatorname{meas}(P_n(M))=1$. Finally, define $\dim(x=x)=2$ and $\operatorname{meas}(x=x)=1$. Note that the dimension corresponds precisely with the coarse pseudofinite dimension of $M$ with respect to the size of $P_1(M)$ and agrees with Morley rank. Also, for a definable set, its measure agrees with its Morley degree. However, since each of the predicates has positive dimension and the predicates are disjoint, it is not possible to see $M$ as a measurable structure in such a way that $\dim(M)=\operatorname{SU}(T)=1$. Hence, $M$ is not coherent measurable.
\end{example}

From now on we will work with $H$-structures of one-dimensional coherent measurable structures. First we define a semiring $\mathcal{R}$; later we assign to each $\mathcal{L}_H$-definable set a dimension and measure in $\mathcal{R}$.
\bd Let $\mathcal{R}$ be the set $\mathbb{N}\times\mathbb{N}^{>0}\times\mathbb{R}^{> 0}\cup\{(0,0,k):k\in\mathbb{N}\}$. We will define operations $\oplus,\odot$ and a relation $\leq$ that will endow $\mathcal{R}$ with an ordered semiring structure. First, define $\oplus$ on $\mathcal{R}$ as: 

\[(x_1,y_1,r_1)\oplus (x_2,y_2,r_2)=\begin{cases} (x_1,y_1,r_1) &\text{ if $(x_2,y_2)<_{\text{lex}}(x_1,y_1)$};\\ (x_2,y_2,r_2) &\text{ if $(x_1,y_1)<_{\text{lex}}(x_2,y_2)$};\\
(x_1,y_1,r_1+r_2) &\text{ if $(x_1,y_1)=(x_2,y_2).$}
\end{cases}\]
Note that $(0,0,0)$ is the neutral element for $\oplus$.\\

Define the product $\odot$ on $\mathcal{R}$ as: $(x_1,y_1,r_1)\odot(x_2,y_2,r_2)=(x_1+x_2,y_1+y_2,r_1r_2)$, for $r_1,r_2\neq 0$. And $(0,0,0)\odot(x,y,r)=(x,y,r)\cdot(0,0,0)=(0,0,0)$. The neutral element for $\odot$ is $(0,0,1)$. Finally, we take $\leq$ to be the lexicographic order on $\mathbb{N}\times\mathbb{N}^{>0}\times\mathbb{R}^{> 0}\cup\{(0,0,k):k\in\mathbb{N}\}$.\\

For a triple $(x,y,r)\in\mathcal{R}$, the pair $(x,y)$ is called the \emph{dimension}, and $r$ is called the \emph{measure}.
\ed

Given an $\mathcal{L}_H$-definable set $X$, we will approximate $X$ by an existential $H$-formula $\exists \bar{z}\in H^{|\bar{z}|}\varphi(\bar{x},\bar{z})$ and determine the measure of $X$ by the $\mathcal{L}$-measure of $\varphi(\bar{x},\bar{z})$. For this, we will use the construction in Claim \ref{claim-main}.

\begin{definition} \label{def-measuring-formula}
Let $(M,H)$ be a sufficiently saturated $H$-structure. Let $X\subseteq M^m$ be a set definable over $\bar{c}$ of $SU$-rank $\omega\cdot n+k$, and let $\varphi(\bar{x},\bar{z})$ be an $\Le$-formula with $|\bar{x}|=m,|\bar{z}|=k$. We say that $\varphi(\bar{x},\bar{z})$ is a \emph{measuring formula for} $X$, if 
\benum
\item[(i)]
$\varphi$ is defined over parameters in $\bar{c},\operatorname{HB}(\bar{c})$;
\item[(ii)]
$\varphi(\bar{a},H^k)$ is finite for any $\bar{a}\in M^{|\bar{x}|}$ and $\exists\bar{z}\in H^k\varphi(\bar{x},\bar{z})$ defines a set $Y$, such that $\operatorname{SU}(X\triangle Y/\bar{c},\operatorname{HB}(\bar{c}))<\omega\cdot n+k$;
\item[(iii)]
For all $\bar{a}\in X$ with $\operatorname{SU}(\bar{a}/\bar{c},\operatorname{HB}(\bar{c}))=\omega\cdot n+k$ and $\bar{d}\in H^k$, we have $M\models \varphi(\bar{a},\bar{d})$ if and only if
$\bar{d}$ enumerates $\operatorname{HB}(\bar{a}/\bar{c},\operatorname{HB}(\bar{c}))$;
\item[(iv)]
$\dim(\varphi)=n+k$ and for all $\bar{e},\bar{b}$ with $M\models \varphi(\bar{e},\bar{b})$ and $\dim(\bar{e},\bar{b}/\bar{c},\operatorname{HB}(\bar{c}))=n+k$, there are $\bar{a},\bar{d}$ such that 
$$\tp_{\mathcal{L}}(\bar{a},\bar{d}/\bar{c},\operatorname{HB}(\bar{c}))=\tp_{\mathcal{L}}(\bar{e},\bar{b}/\bar{c},\operatorname{HB}(\bar{c})),$$ $\operatorname{SU}(\bar{a}/\bar{c},\operatorname{HB}(\bar{c}))=\omega\cdot n+k$ and $\bar{d}$ an enumeration of $\operatorname{HB}(\bar{a}/\bar{c},\operatorname{HB}(\bar{c}))$.
\eenum
\end{definition}

\brmk
By Claim \ref{claim-main}, for any $\Le_H$-definable set $X$ there is a measuring formula for $X$.
\ermk
\begin{definition}\label{def-dimMeas} Suppose $M$ is a one-dimensional coherent measurable structure, and $(M,H)$ is a sufficiently saturated $H$-structure. Let $X$ be a definable set defined over $\bar{c}$ with $SU$-rank $\omega\cdot n+k$ and let $\varphi(\bar{x},\bar{z})$ be a measuring formula for $X$. Suppose $\varphi$ has measure $\mu$. Then, we call the pair $(n,k)$ the \emph{dimension} of $X$ and we call $\mu/|\mathbb{S}_k|$ the \emph{measure of $X$}. The triple $(n,k,\mu/|\mathbb{S}_k|)\in\mathcal{R}$ will be called the \emph{dimension and measure} of $X$.
\end{definition}

We will prove that the dimension and measure for a definable set $X$ is well-defined, i.e., that the dimension and measure of $X$ do not depend on the particular choice of a measuring formula.

\begin{lemma}\label{lem-equiMeasure}
Suppose $M$ is a one-dimensional coherent measurable structure, $(M,H)$ is a sufficiently saturated $H$-structure and $X$ is a definable set. Suppose $\varphi_1(\bar{x},\bar{z})$ and $\varphi_2(\bar{x},\bar{z})$ are two measuring formulas of $X$. Then $\varphi_1$ and $\varphi_2$ have the same dimension and measure as $\mathcal{L}$-formulas.
\end{lemma}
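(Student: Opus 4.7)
The plan is to compare the two measuring formulas $\varphi_1,\varphi_2$ on their generic points (with respect to the $\mathcal{L}$-dimension over $\bar c,\operatorname{HB}(\bar c)$) and show that they differ on a set of strictly smaller dimension, so that additivity of measure in the coherent measurable structure $M$ forces them to have the same measure. That the dimensions agree is immediate from condition (iv) of Definition \ref{def-measuring-formula}: both $\varphi_1$ and $\varphi_2$ satisfy $\dim(\varphi_i)=n+k$.

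The key step is to prove $\dim(\varphi_1\triangle\varphi_2)<n+k$. To do this, I would take an arbitrary point $(\bar e,\bar b)$ in $\varphi_1$ with $\dim(\bar e,\bar b/\bar c,\operatorname{HB}(\bar c))=n+k$ and show $(\bar e,\bar b)\in\varphi_2$. By condition (iv) applied to $\varphi_1$ there exist $(\bar a,\bar d)$ with $\operatorname{tp}_{\mathcal{L}}(\bar a,\bar d/\bar c,\operatorname{HB}(\bar c))=\operatorname{tp}_{\mathcal{L}}(\bar e,\bar b/\bar c,\operatorname{HB}(\bar c))$ such that $\operatorname{SU}(\bar a/\bar c,\operatorname{HB}(\bar c))=\omega\cdot n+k$ and $\bar d$ enumerates $\operatorname{HB}(\bar a/\bar c,\operatorname{HB}(\bar c))$. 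Because $\bar d\in H^k$ and $\varphi_1(\bar a,\bar d)$ holds (by $\mathcal{L}$-type equality), $\bar a$ lies in the set $Y_1$ defined by $\exists\bar z\in H^k\,\varphi_1(\bar x,\bar z)$; condition (ii) for $\varphi_1$ gives $\operatorname{SU}(X\triangle Y_1/\bar c,\operatorname{HB}(\bar c))<\omega\cdot n+k$, so the maximality of $\operatorname{SU}(\bar a/\bar c,\operatorname{HB}(\bar c))$ forces $\bar a\in X$. Now condition (iii) applied to $\varphi_2$ yields $M\models\varphi_2(\bar a,\bar d)$, and $\mathcal{L}$-type equality transfers this to $M\models\varphi_2(\bar e,\bar b)$. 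Symmetry gives the reverse inclusion on generics, so every generic point of either $\varphi_i$ lies in their intersection, and thus $\dim(\varphi_1\triangle\varphi_2)<n+k$.

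With this in hand, the conclusion follows from the additivity of measure in the measurable structure $M$: writing $\varphi_1=(\varphi_1\cap\varphi_2)\cup(\varphi_1\setminus\varphi_2)$ and noting that the second piece has dimension $<n+k$, the Fubini property in Definition \ref{MS-measurable}(4) (or equivalently, the fact that measure is only summed over the maximal-dimension pieces) gives $\operatorname{meas}(\varphi_1)=\operatorname{meas}(\varphi_1\cap\varphi_2)=\operatorname{meas}(\varphi_2)$.

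The main obstacle is the transfer step: making sure that a merely \emph{generic} (in the $\mathcal{L}$-dimension sense) satisfier of $\varphi_1$ actually comes from a genuine pair $(\bar a,\bar d)$ with $\bar a$ an $\operatorname{SU}$-generic of $X$ and $\bar d$ its $H$-basis enumeration. This is precisely what conditions (iii) and (iv) of Definition \ref{def-measuring-formula} were designed to guarantee, together with the requirement in (ii) that $\exists\bar z\in H^k\,\varphi_i(\bar x,\bar z)$ approximates $X$ up to strictly smaller $\operatorname{SU}$-rank; once the transfer is carried out, the rest is a standard additivity argument.
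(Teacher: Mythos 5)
Your proof is correct and follows essentially the same route as the paper's: establish $\dim(\varphi_1\triangle\varphi_2)<n+k$ by taking a generic satisfier $(\bar e,\bar b)$ of $\varphi_1$, replacing it via clause (iv) with a pair $(\bar a,\bar d)$ where $\bar a$ is $\operatorname{SU}$-generic and $\bar d$ enumerates its $H$-basis, applying clause (iii) of $\varphi_2$, transferring back by equality of $\mathcal{L}$-types, and then concluding equality of measures from measurability of $M$. Your explicit verification that $\bar a\in X$ (using clause (ii) and the maximality of $\operatorname{SU}(\bar a/\bar c,\operatorname{HB}(\bar c))$) before invoking clause (iii) is a small step the paper leaves implicit, but the argument is the same.
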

\begin{proof}
Suppose $X$ is defined over $\bar{c}$ and $\operatorname{SU}(X/\bar{c},\operatorname{HB}(\bar{c}))=\omega\cdot n+k$. 
By the definition of measuring formulas, 
$\dim(\varphi_1)=\dim(\varphi_2)=n+k$.
It suffices to show that $\dim(\varphi_1\triangle \varphi_2)<n+k$, as this will imply $\mu(\varphi_1)=\mu(\varphi_1\land\varphi_2)=\mu(\varphi_2)$ by the finite additivity property. Suppose, in order to obtain a contradiction, that  $\dim(\varphi_1\triangle \varphi_2)=n+k$, then there is a tuple $\bar{e},\bar{b}$ such that $M\models \varphi_1(\bar{e},\bar{b})\triangle \varphi_2(\bar{e},\bar{b})$ 
and $\dim(\bar{e},\bar{b}/\bar{c},\operatorname{HB}(\bar{c}))=n+k$.
We may assume that $M\models \varphi_1(\bar{e},\bar{b})$ and $M\models \neg\varphi_2(\bar{e},\bar{b})$.
Since $\varphi_1$ is a measuring formula, by the clause (iv) of the definition there are $\bar{a},\bar{d}$ such that 
$$\tp_{\mathcal{L}}(\bar{a},\bar{d}/\bar{c},\operatorname{HB}(\bar{c}))=\tp_{\mathcal{L}}(\bar{e},\bar{b}/\bar{c},\operatorname{HB}(\bar{c})) \text{ and } \operatorname{SU}(\bar{a}/\bar{c},\operatorname{HB}(\bar{c}))=\omega\cdot n+k,$$ where $\bar{d}$ an enumeration of $\operatorname{HB}(\bar{a}/\bar{c},\operatorname{HB}(\bar{c}))$, and thus it is a tuple in $H^k$. Let $Y_1$ be the set defined by $\exists \bar{z}\in H^k\,\varphi_1(\bar{x},\bar{z})$. Then, by clause (ii) of Definition \ref{def-measuring-formula}, we have $\operatorname{SU}(X\triangle Y_1/\bar{c},\operatorname{HB}(\bar{c}))<\omega\cdot n+k$, and thus $\bar{a}\in X$.
Since $\varphi_2$ is also a measuring formula for $X$, and $\bar{d}$ is an enumeration of $\operatorname{HB}(\bar{a}/\bar{c},\operatorname{HB}(\bar{c}))$, by clause (iii) of the definition we must have $M\models \varphi_2(\bar{a},\bar{d})$.
Since $\tp_{\mathcal{L}}(\bar{a},\bar{d}/\bar{c},\operatorname{HB}(\bar{c}))=\tp_{\mathcal{L}}(\bar{e},\bar{b}/\bar{c},\operatorname{HB}(\bar{c}))$, 
we have $M\models \varphi_2(\bar{e},\bar{b})$, a contradiction.
\end{proof}

We now show some examples of dimensions and measures.

\begin{example} \label{Example-H-VS2} (Example \ref{example-H-vs} revisited)
Let $(\mathbb{V},H)=\prod_{\mathcal{U}}(\mathbb{F}_p^{2n},H_n)$ be an infinite ultraproduct of the finite vector spaces $\mathbb{F}_p^{2n}$ together with a predicate $H_n$ for $n$ linearly independent vectors. Note that the reduct $\mathbb{V}$ is nowhere trivial as it is a group. It is also strongly minimal and pseudofinite, so by \cite[Lemma 2.4]{MS} it is the ultraproduct of a 1-dimensional asymptotic class, and hence it is measurable of $SU$-rank 1. By the arguments in \cite[Section 2]{Pi-str}, it is not difficult to see that measure of a definable set in $\mathbb{V}$ (as a measurable structure) coincides with its Morley degree. The key point in this argument is that $\mathbb{V}$ has the definable multiplicity property.

Now, since $H$ is an infinite collection of linearly independent elements from $\mathbb{V}$ and the dimension of the quotient $\mathbb{V}/\operatorname{span}(H)$ is again infinite, the pair $(\mathbb{V},H)$ is an $H$-structure. From now on in this example, let us assume $p$ is a prime different from $2$ and $3$. Consider the definable sets $X=H+H=\{h_1+h_2:h_1,h_2\in H\}$ and $Y=H+2H=\{h_1+2h_2:h_1,h_2\in H\}$.
Clearly $\operatorname{SU}(H+H)=2$ and $\operatorname{SU}(H+2H)=2$. Notice that $|H+2H|=|H\times H|$ (as non-standard sizes), while $2\cdot |H+H|=|H\times H|+|H|$ 
because the definable function that maps $(h_1,h_2)$ to $h_1+h_2$ is generically 2-to-1. 

Let $\varphi_1(x,z_1,z_2)$ be the formula $x=z_1+z_2$, so $X$ is defined by 
$\exists (z_1,z_2)\in H^2 \varphi_1(x,z_1,z_2)$ and $\varphi_1(x,z_1,z_2)$
is a measuring formula for $X$. Note that the $\mathcal{L}$-measure of $\varphi_1(x,z_1,z_2)$ is $1$ (it has Morley degree $1$) and the measure of $X$ is $1/(2!)=1/2$.

Let $\varphi_2(x,z_1,z_2)$ be the formula $(x=z_1+2z_2\vee x=2z_1+z_2)$. The set $Y$ is defined by 
$\exists (z_1,z_2)\in H^2 \varphi_2(x,z_1,z_2)$ and $\varphi_2(x,z_1,z_2)$
is a measuring formula for $Y$. Note that we had to use $\varphi_2(x,z_1,z_2)$ instead of the formula $x=z_1+2z_2$ to guarantee that clause (iii) holds in the definition of a measuring formula, that is, the formula should be invariant under permutations of $\bar{z}=(z_1,z_2)$. Since the Morley degree of $\varphi_2(x,z_1,z_2)$ is $2$, the $\mathcal{L}$-measure of $\varphi_2(x,z_1,z_2)$ is also $2$ and so the measure of $Y$ is $2/(2!)=1$.
 
\end{example}

\begin{lemma}\label{lem-defMeas}
Suppose $M$ is a one-dimensional coherent measurable structure and that $(M,H)$ is a sufficiently saturated $H$-structure. Then the dimension and measure defined in \emph{Definition} \ref{def-dimMeas} are $\Le_H$-definable. 
\end{lemma}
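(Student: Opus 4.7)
The plan is to combine Theorem \ref{thm-def-SU}, which provides $\mathcal{L}_H$-definability of the SU-rank (hence of the dimension component $(n,k)$), with the Definability Condition of the underlying measurable structure $M$ applied to the $\mathcal{L}$-formulas $\tilde{\eta}'_{\bar{c}}$ produced in Claim \ref{claim-main}, which will supply the $\mathcal{L}_H$-definability of the measure component.

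Fix an $\mathcal{L}_H$-formula $\varphi(\bar{x},\bar{y})$. First I would apply Theorem \ref{thm-def-SU} to obtain a finite cover $\{\psi_i(\bar{y}) : i \leq N\}$ of $M^{|\bar{y}|}$ by existential $\mathcal{L}_H$-formulas together with pairs $(n_i,k_i)$, so that $\operatorname{SU}(\varphi(\bar{x},\bar{c})) = \omega\cdot n_i + k_i$ whenever $M\models\psi_i(\bar{c})$; this already yields $\mathcal{L}_H$-definability of the dimension function $\bar{c}\mapsto (n,k)$. Moreover, from the proof of that theorem and Claim \ref{claim-main}, for each $i\leq N$ I have at hand an auxiliary $\mathcal{L}_H$-formula $\psi'_{\bar{c}_i}(\bar{h},\bar{y})$ and an $\mathcal{L}$-formula $\tilde{\eta}'_{\bar{c}_i}(\bar{x},\bar{z},\bar{h},\bar{y})$, both invariant under permutations of $\bar{h}$, such that whenever $\psi_i(\bar{c})$ holds and $\bar{h}\in H^{|\bar{h}|}$ satisfies $\psi'_{\bar{c}_i}(\bar{h},\bar{c})$, the instance $\tilde{\eta}'_{\bar{c}_i}(\bar{x},\bar{z},\bar{h},\bar{c})$ is a measuring formula for $\varphi(\bar{x},\bar{c})$ in the sense of Definition \ref{def-measuring-formula}.

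For the measure, I would apply the Definability Condition of $M$ (clause (3) of Definition \ref{MS-measurable}) to the $\mathcal{L}$-formula $\tilde{\eta}'_{\bar{c}_i}(\bar{x},\bar{z};\bar{h},\bar{y})$, viewed with object variables $(\bar{x},\bar{z})$ and parameter variables $(\bar{h},\bar{y})$. This yields a finite set $D_i\subseteq\mathbb{N}\times\mathbb{R}^{>0}$ and, for each $(d,\nu)\in D_i$, an $\mathcal{L}$-formula $\chi_{i,d,\nu}(\bar{h},\bar{y})$ defining the locus where the dimension and measure of $\tilde{\eta}'_{\bar{c}_i}(\cdot,\cdot,\bar{h},\bar{y})$ equal $(d,\nu)$. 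Then for each $i\leq N$ and each $\nu$ with $(n_i+k_i,\nu)\in D_i$, the set
\[
\psi_i(\bar{y})\wedge \exists \bar{h}\in H^{|\bar{h}|}\bigl(\psi'_{\bar{c}_i}(\bar{h},\bar{y})\wedge \chi_{i,n_i+k_i,\nu}(\bar{h},\bar{y})\bigr)
\]
is an $\mathcal{L}_H$-definition of the locus where $\varphi(\bar{x},\bar{c})$ has dimension $(n_i,k_i)$ and measure $\nu/k_i!$. Ranging over the finitely many triples $(n_i,k_i,\nu/k_i!)$ partitions $M^{|\bar{y}|}$ into the required $\mathcal{L}_H$-definable pieces.

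The main obstacle is verifying that the value $\nu$ extracted this way is well-defined and genuinely computes the measure of $\varphi(\bar{x},\bar{c})$ from Definition \ref{def-dimMeas}. This requires checking two things: that $\operatorname{meas}(\tilde{\eta}'_{\bar{c}_i}(\cdot,\cdot,\bar{h},\bar{c}))$ is independent of the particular enumeration $\bar{h}$ of the relevant $H$-basis satisfying $\psi'_{\bar{c}_i}(\bar{h},\bar{c})$, and that the division by $k_i!=|\mathbb{S}_{k_i}|$ is the correct normalization. The permutation-invariance of $\psi'_{\bar{c}_i}$ and $\tilde{\eta}'_{\bar{c}_i}$ supplied by the ``moreover'' clause of Claim \ref{claim-main} handles the first point, since any two valid $\bar{h}$ are permutations of one another. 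The second point is immediate from Definition \ref{def-dimMeas} together with Lemma \ref{lem-equiMeasure}, which guarantees that any measuring formula of $\varphi(\bar{x},\bar{c})$ has the same $\mathcal{L}$-measure, so dividing by $k_i!$ yields precisely the measure of $\varphi(\bar{x},\bar{c})$.
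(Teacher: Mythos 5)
Your proposal is correct and follows essentially the same route as the paper: Theorem \ref{thm-def-SU} together with Claim \ref{claim-main} supply the existential cover $\psi_i$, the auxiliary formulas $\psi_i'$ and the measuring formulas $\tilde{\eta}_i'$, and then the Definability Condition of the measurable structure $M$ applied to the $\mathcal{L}$-formulas $\tilde{\eta}_i'$ (your $\chi_{i,d,\nu}$, the paper's $\zeta_{ij}$) yields the measure locus, which is made $\Le_H$-definable by quantifying $\bar{h}$ existentially over $H$. Your remarks on permutation-invariance and Lemma \ref{lem-equiMeasure} for well-definedness of the value $\nu/k_i!$ match the role those results play in the paper.
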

\begin{proof}
By Theorem \ref{thm-def-SU} and Claim \ref{claim-main},  given an $\mathcal{L}_H$-formula $\varphi(\bar{x};\bar{y})$ there are finitely many formulas $$\{\psi_i(\bar{y}),\psi_i'(\bar{v}_i,\bar{y}),\eta_i(\bar{x},\bar{z}_i,\bar{v}_i,\bar{y}):i\leq N\}$$ and a finite set of pairs $D_{\varphi}=\{(n_i,k_i):i\leq N\}\subseteq \mathbb{N}\times\mathbb{N}$
such that 
\begin{itemize}
\item the collection $\{\psi_i(\bar{y}):i\leq N\}$ forms a cover of $M^{|\bar{y}|}$;
\item
$\psi_i(\bar{y})=\exists\bar{v}_i\in H^{|\bar{v}_i|}\psi_i'(\bar{v}_i,\bar{y})$;
\item for every $i\leq N$ and $\bar{a}\in M^{|\bar{y}|}$, if $(M,H)\models \psi_i(\bar{a})$ holds then the set $X_{\bar{a}}$ defined by $\varphi(\bar{x};\bar{a})$ has $SU$-rank $\omega\cdot n_i+k_i$ over $\bar{a},\operatorname{HB}(\bar{a})$;
\item for every $i\leq N$ and  $\bar {a}\in M^{|\bar{y}|}$, if $\bar{h}_i\in H^{|\bar{v}_i|}$ satisfies $(M,H)\models \psi_i'(\bar{h}_i,\bar{a})$, then $\bar{h}_i\subseteq \operatorname{HB}(\bar{a})$ and $\eta_i(\bar{x},\bar{z}_i;\bar{h}_i,\bar{a})$ is a measuring formula for $X_{\bar{a}}$.
\end{itemize}

Since $M$ is a measurable structure, for every $i\leq N$ there is a finite set  $E_i\subseteq \mathbb{N}\times\mathbb{R}^{>0}\cup\{(0,0)\}, i\leq N$ and $\Le$-formulas $\{\zeta_{i,(d,\mu)}(\bar{v}_i,\bar{y}):i\leq N, (d,\mu)\in E_i\}$ such that $M\models \zeta_{i,(d,\mu)}(\bar{b}_i,\bar{a})$ implies that the $\mathcal{L}$-definable set defined by $\eta_i(\bar{x},\bar{z}_i;\bar{b}_i,\bar{a})$ has dimension $d$ measure $\mu$. Thus, we know that $X_{\bar{a}}$ has dimension and measure $(n,k,\mu/|\mathbb{S}_{k}|)$ if and only if 
$$(M,H)\models \bigvee_{\{i\leq N:(n_i,k_i)=(n,k)\}}\exists\bar{v}_i\in H^{|\bar{v}_i|}\left(\psi_i'(\bar{v}_i,\bar{a})\land \zeta_{i,(n+k,\mu)}(\bar{v}_i,\bar{a})\right).\qedhere$$ 
\end{proof}

\begin{lemma}\label{lem-finAdd}
Suppose $M$ is a one-dimensional coherent measurable structure and that $(M,H)$ is a sufficiently saturated $H$-structure. Then the dimension and measure are finitely additive, that is, if $X_1,X_2\subseteq M^m$ are disjoint definable sets whose dimensions and measures are $(n_1,k_1,\mu_1)$, $(n_2,k_2,\mu_2)$ respectively, then
the dimension and measure of $X_1\cup X_2$ is $(n_1,k_1,\mu_1)\oplus(n_2,k_2,\mu_2)$.
\end{lemma}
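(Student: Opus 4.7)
The plan is to reduce the additivity to a comparison of $\mathcal{L}$-measures of measuring formulas, and to use the disjointness of $X_1,X_2$ together with clauses (iii)--(iv) of Definition \ref{def-measuring-formula} to prove that the overlap of the two measuring formulas is negligible in $\mathcal{L}$-dimension. After expanding the parameter sets, I may assume both $X_1$ and $X_2$ are definable over a common $H$-independent tuple $\bar c,\operatorname{HB}(\bar c)$, and I fix measuring formulas $\varphi_i(\bar x,\bar z_i)$ of the form guaranteed by Claim \ref{claim-main}, with $|\bar z_i|=k_i$.

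In the first case, when $(n_1,k_1)\neq(n_2,k_2)$, say $\omega\cdot n_1+k_1>\omega\cdot n_2+k_2$, the Cantor-additivity of $SU$-rank gives $\operatorname{SU}(X_1\cup X_2)=\omega\cdot n_1+k_1$. I would then verify that $\varphi_1$ itself is a measuring formula for $X_1\cup X_2$: clauses (i)--(ii) follow because $\operatorname{SU}((X_1\cup X_2)\triangle Y_1)\leq\max\{\operatorname{SU}(X_1\triangle Y_1),\operatorname{SU}(X_2)\}<\omega\cdot n_1+k_1$, where $Y_1$ is the set defined by $\exists\bar z\in H^{k_1}\varphi_1$; clause (iii) holds because any $\bar a\in X_1\cup X_2$ with $\operatorname{SU}(\bar a/\bar c,\operatorname{HB}(\bar c))=\omega\cdot n_1+k_1$ cannot lie in $X_2$, which has strictly smaller rank, so necessarily $\bar a\in X_1$; clause (iv) is unchanged. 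This yields dimension and measure $(n_1,k_1,\mu_1)=(n_1,k_1,\mu_1)\oplus(n_2,k_2,\mu_2)$ as required.

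In the second case, $(n_1,k_1)=(n_2,k_2)=(n,k)$ and both measuring formulas have the same $|\bar z|=k$. The crucial step is to show that $\dim(\varphi_1\wedge\varphi_2)<n+k$ as $\mathcal{L}$-formulas over $\bar c,\operatorname{HB}(\bar c)$. If a tuple $(\bar e,\bar b)$ realized the conjunction with $\dim(\bar e,\bar b/\bar c,\operatorname{HB}(\bar c))=n+k$, clause (iv) applied to $\varphi_1$ would produce $(\bar a,\bar d)$ with the same $\mathcal{L}$-type, $\operatorname{SU}(\bar a/\bar c,\operatorname{HB}(\bar c))=\omega\cdot n+k$, and $\bar d$ an enumeration of $\operatorname{HB}(\bar a/\bar c,\operatorname{HB}(\bar c))$; since $\bar a\in Y_1$ and $\operatorname{SU}(X_1\triangle Y_1)<\omega\cdot n+k$, I would get $\bar a\in X_1$. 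On the other hand $\varphi_2(\bar a,\bar d)$ also holds (it is an $\mathcal{L}$-formula and $(\bar a,\bar d)$ has the same $\mathcal{L}$-type as $(\bar e,\bar b)$), giving $\bar a\in Y_2$ and hence $\bar a\in X_2$, contradicting $X_1\cap X_2=\emptyset$. Once $\dim(\varphi_1\wedge\varphi_2)<n+k$ is established, standard additivity of measure in the underlying measurable structure $M$ gives $\mu(\varphi_1\vee\varphi_2)=\mu(\varphi_1)+\mu(\varphi_2)$.

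Finally, I pick a measuring formula $\varphi$ for $X_1\cup X_2$ produced by Claim \ref{claim-main} and argue, by a symmetric application of clauses (iii) and (iv) on both sides, that $\dim(\varphi\triangle(\varphi_1\vee\varphi_2))<n+k$. Indeed, any generic realisation $(\bar e,\bar b)$ of the symmetric difference would, via clause (iv) of whichever formula it satisfies, yield $(\bar a,\bar d)$ with $\bar a\in X_1\cup X_2$ of maximal rank; clause (iii) of the other formula would then force satisfaction, contradicting the choice of $(\bar e,\bar b)$. Hence $\mu(\varphi)=\mu(\varphi_1)+\mu(\varphi_2)$, and dividing by $|\mathbb{S}_k|$ gives $\operatorname{meas}(X_1\cup X_2)=\operatorname{meas}(X_1)+\operatorname{meas}(X_2)=\mu_1+\mu_2$, matching $(n,k,\mu_1)\oplus(n,k,\mu_2)$. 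The main obstacle is the careful bookkeeping in the disjointness argument for $\varphi_1\wedge\varphi_2$: one must use that the tuple $\bar a$ produced by clause (iv) is genuinely generic (not just an $\mathcal{L}$-type realisation) so that it inherits membership in each $X_i$ from each $Y_i$, turning the $\mathcal{L}$-level identity of types into an $\mathcal{L}_H$-level contradiction.
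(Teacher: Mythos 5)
Your proposal is correct and takes essentially the same route as the paper: the same case split on whether $(n_1,k_1)=(n_2,k_2)$, the same key claim that $\dim(\varphi_1\wedge\varphi_2)<n+k$ proved by using clause (iv) to produce a genuinely generic realization that would lie in both $X_1$ and $X_2$, and the same appeal to additivity of the $\mathcal{L}$-measure in $M$. The only cosmetic difference is at the end: the paper checks directly that $\varphi_1\vee\varphi_2$ (resp.\ $\varphi_1$ in the unequal case) is a measuring formula for $X_1\cup X_2$ and invokes Lemma \ref{lem-equiMeasure}, while you compare $\varphi_1\vee\varphi_2$ against a measuring formula supplied by Claim \ref{claim-main}; the two bookkeeping steps are interchangeable.
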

\begin{proof}

Let $\varphi_1(\bar{x},\bar{z}_1)$ and $\varphi_2(\bar{x},\bar{z}_2)$ be the measuring formulas for $X_1$ and $X_2$ respectively.
Suppose $(n_1,k_1)>(n_2,k_2)$. Then it is easy to see that $\varphi_1(\bar{x},\bar{z}_1)$ is also a measuring formula for the set $X_1\cup X_2$.
Therefore, the dimension and measure for $X_1\cup X_2$ is $(n_1,k_1,\mu_1)$, which equals $(n_1,k_1,\mu_1)\oplus (n_2,k_2,\mu_2)$.
The case $(n_1,k_1)<(n_2,k_2)$ is analogous.

It remains to consider the case $(n_1,k_1)=(n_2,k_2)$. We can then write $n$ for $n_1=n_2$ and $k$ for $k_1=k_2$. Moreover, since $\bar{z}_1$ and $\bar{z}_2$ have the same length, we can rename both of them simply as $\bar{z}$. It is easy to check that $\varphi_1(\bar{x},\bar{z})\vee \varphi_2(\bar{x},\bar{z})$ is a measuring formula for $X_1 \cup X_2$.

\textbf{Claim:} $\dim(\varphi_1(\bar{x},\bar{z})\wedge \varphi_2(\bar{x},\bar{z}))<n+k$.

We may assume that $X_1$ and $X_2$ are defined over $\bar{c}$. 
Suppose, towards a contradiction, that there are $\bar{e},\bar{b}$ such that $\varphi_1(\bar{e},\bar{b})\wedge \varphi_2(\bar{e},\bar{b})$ holds 
and that $\dim(\bar{e},\bar{b}/\bar{c},\operatorname{HB}(\bar{c}))=n+k$.
By the definition of measuring formulas, there are $\bar{a},\bar{d}$ such that $\bar{d}\in H^{k}$, $\operatorname{SU}(\bar{a}/\bar{c},\operatorname{HB}(\bar{c}))=\omega\cdot n+k$ and $\tp_{\mathcal{L}}(\bar{e},\bar{b}/\bar{c},\operatorname{HB}(\bar{c})=\tp_{\mathcal{L}}(\bar{a},\bar{d}/\bar{c},\operatorname{HB}(\bar{c}))$.
In particular $\varphi_1(\bar{a},\bar{d})\land\varphi_2(\bar{a},\bar{d})$ holds.
Therefore, $\bar{a}\in X_1$ and $\bar{a}\in X_2$, contradicting that $X_1\cap X_2=\emptyset$.

We know that both $\varphi_1$ and $\varphi_2$ have dimension $n+k$, the $\Le$-measure of $\varphi_1$ is $\mu_1\cdot|\mathbb{S}_{k}|$
and the $\Le$-measure of  $\varphi_2$ is $\mu_2\cdot|\mathbb{S}_{k}|$.
Since $\dim(\varphi_1(\bar{x},\bar{z})\wedge \varphi_2(\bar{x},\bar{z}))<n+k$, we get that 
the $\Le$-measure of $\varphi_1\vee\varphi_2$ is $(\mu_1+\mu_2)\cdot|\mathbb{S}_{k}|$.
Therefore, the measure of $X_1\cup X_2$ is $\mu_1+\mu_2$ as desired.
\end{proof}

\begin{lemma}\label{lem-funct}
Let $f:X\to Y$ be an $\mathcal{L}_H$-definable surjective function and suppose $\varphi(\bar{x},\bar{y},\bar{z})$ is a measuring formula for $G(f)$, the graph of $f$. Then:
\benum
\item[(a)] The formula $\varphi'(\bar{x},\bar{y},\bar{z}):=\varphi(\bar{x},\bar{y},\bar{z})\land \forall\bar{y}'\left(\varphi(\bar{x},\bar{y}',\bar{z})\to\bar{y}=\bar{y}'\right)$ is also a measuring formula for $G(f)$.
\item[(b)] Assume that $\rho(\bar{x},\bar{z})$ is a measuring formula for $X$. Then the formula
$\varphi''(\bar{x},\bar{y},\bar{z}):=\varphi'(\bar{x},\bar{y},\bar{z})\wedge \rho(\bar{x},\bar{z})$ is also a measuring formula for $G(f)$ and $\exists \bar{y}\,\varphi''(\bar{x},\bar{y},\bar{z})$ is a measuring formula for $X$.
\eenum
\end{lemma}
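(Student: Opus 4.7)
My plan is to verify clauses (i)--(iv) of Definition~\ref{def-measuring-formula} for $\varphi'$ in part (a) and for both $\varphi''$ and $\exists\bar{y}\,\varphi''$ in part (b). The core ingredient, which I will establish first, is a single uniqueness claim.

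\textbf{Key claim.} For every generic $(\bar{a},\bar{b})\in G(f)$ with $\operatorname{SU}((\bar{a},\bar{b})/\bar{c},\operatorname{HB}(\bar{c}))=\omega\cdot n+k$ and $\bar{d}$ any enumeration of $\operatorname{HB}((\bar{a},\bar{b})/\bar{c},\operatorname{HB}(\bar{c}))$, the tuple $\bar{b}$ is the unique $\bar{y}$ satisfying $\varphi(\bar{a},\bar{y},\bar{d})$. The argument is short: because $\bar{b}\in\acl_{\mathcal{L}_H}(\bar{a},\bar{c})$, Cantor-additivity of $SU$-rank (Lemma~\ref{lem-addSU}) yields $\operatorname{SU}(\bar{a}/\bar{c},\operatorname{HB}(\bar{c}))=\omega\cdot n+k$. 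If some $\bar{y}'\ne\bar{b}$ also satisfied $\varphi(\bar{a},\bar{y}',\bar{d})$, then $(\bar{a},\bar{y}')$ would lie in $Y:=\exists\bar{z}\in H^k\,\varphi$ but not in $G(f)$ (as $f(\bar{a})=\bar{b}\ne\bar{y}'$). Monotonicity of $SU$-rank gives $\operatorname{SU}((\bar{a},\bar{y}')/\bar{c},\operatorname{HB}(\bar{c}))\ge\operatorname{SU}(\bar{a}/\bar{c},\operatorname{HB}(\bar{c}))=\omega\cdot n+k$, contradicting clause (ii) for $\varphi$, which forces $\operatorname{SU}(Y\setminus G(f))<\omega\cdot n+k$.

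With the key claim, part (a) follows by direct verification: clause (i) is inherited; the implication $\varphi'\Rightarrow\varphi$ supplies the finite-fiber half of (ii), the $(\Rightarrow)$ direction of (iii), the upper bound $\dim(\varphi')\le n+k$, and the second half of (iv) via (iv) of $\varphi$. The key claim supplies the $(\Leftarrow)$ direction of (iii), the matching lower bound $\dim(\varphi')\ge n+k$, and --- most importantly --- the fact that every generic $(\bar{a},\bar{b})\in G(f)$ lies in $Y':=\exists\bar{z}\in H^k\,\varphi'$, which together with (ii) of $\varphi$ yields $\operatorname{SU}(G(f)\triangle Y')<\omega\cdot n+k$.

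For part (b), I handle $\varphi''=\varphi'\wedge\rho$ analogously: at a generic $(\bar{a},\bar{b})\in G(f)$ with $H$-basis $\bar{d}$, additivity of $\operatorname{HB}$ (Proposition~\ref{additivityHB}) combined with $\bar{b}\in\acl_{\mathcal{L}_H}(\bar{a},\bar{c})$ gives $\operatorname{HB}(\bar{a}/\bar{c},\operatorname{HB}(\bar{c}))=\bar{d}$, so (iii) for $\rho$ provides $\rho(\bar{a},\bar{d})$ alongside the $\varphi'(\bar{a},\bar{b},\bar{d})$ coming from (a), and the remaining clauses follow as before. For $\exists\bar{y}\,\varphi''(\bar{x},\bar{y},\bar{z})$ as a measuring formula for $X$, the containment $\exists\bar{y}\,\varphi''\subseteq\rho$ yields the upper dimension bound and reduces the second half of (iv) to that of $\rho$, while for generic $\bar{a}\in X$ the value $\bar{y}=f(\bar{a})$ witnesses $\exists\bar{y}\,\varphi''(\bar{a},\bar{y},\bar{d})$, providing the reverse inequality and clause (iii); the single-valuedness of $\varphi'$ ensures any $\varphi''$-witness is $\mathcal{L}$-definable over $(\bar{a},\bar{d},\bar{c},\operatorname{HB}(\bar{c}))$, so the $\mathcal{L}$-dimension is preserved under the projection $\exists\bar{y}$. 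The main obstacle is seeing the key claim at all --- once one notices that an extra solution $\bar{y}'$ would inherit via monotonicity the $SU$-rank of $\bar{a}$ and thereby violate (ii) of $\varphi$, the rest is routine bookkeeping built from the definition of measuring formula and standard properties of $SU$-rank and $H$-bases.
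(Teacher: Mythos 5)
Your argument is correct, but the engine driving part (a) is genuinely different from the paper's. The paper proves a stronger uniqueness statement: for \emph{every} realization $(\bar a,\bar b,\bar d)$ of $\varphi$ of full $\mathcal{L}$-dimension $n+k$ (with $\bar d$ an arbitrary tuple, not assumed in $H$), the $\bar y$-coordinate is forced; it gets this by using clause (iv) to conjugate such a tuple to a generic $H$-independent configuration, proving $\bar b_1\in\dcl_{\mathcal{L}}(\bar a_1,\bar d_1)$ there via Lemma \ref{fundlem1} and $\bar b_1\in\dcl_{\mathcal{L}_H}(\bar a_1)$, and then transferring back through the $\mathcal{L}$-type equality and compactness to obtain a definable function $g$ with $\bar y=g(\bar x,\bar z)$ on the whole full-dimensional locus. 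This shows $\dim(\varphi\wedge\neg\varphi')<n+k$, so $\varphi'$ inherits everything (including, incidentally, the same $\mathcal{L}$-measure) at once. Your key claim is weaker --- uniqueness only at generic points of $G(f)$ with $\bar d\in H^k$ enumerating the $H$-basis --- but your proof of it (monotonicity of SU-rank plus $\operatorname{SU}(\bar a/\cdot)=\operatorname{SU}(\bar a\bar b/\cdot)$ from $\bar b\in\acl_{\mathcal{L}_H}(\bar a\bar c)$, played against clause (ii) of the definition) is correct and shorter, and you rightly observe that clauses (ii)--(iv) for $\varphi'$ only ever interrogate either such generic configurations or full-dimensional realizations of $\varphi'$ itself, which satisfy $\varphi$ and inherit clause (iv) from it; the measure equality, which the paper gets for free, is anyway recovered downstream from Lemma \ref{lem-equiMeasure}. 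Your treatment of (b) matches the paper's (the conjunct $\rho$ supplying finiteness of the $H$-fibers, the containment $\exists\bar y\,\varphi''\Rightarrow\rho$ supplying the $X_1\setminus X$ half of (ii), the forward direction of (iii) and the second half of (iv), and $f(\bar a)$ witnessing the generic direction). Two small points you elide: passing from ``no generic of $G(f)$ lies in $G(f)\setminus Y'$'' to $\operatorname{SU}(G(f)\setminus Y')<\omega\cdot n+k$ uses that the rank of a set defined over the $H$-independent base $\bar c\operatorname{HB}(\bar c)$ is attained (Lemma \ref{lem-continuitySU}), and the identification $k=|\operatorname{HB}(\bar a/\cdot)|=|\operatorname{HB}(\bar a\bar b/\cdot)|$ (so that $\rho$ has the same number of $\bar z$-variables) should be stated once; both are routine with the tools you already cite.
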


\begin{proof} To simplify the notation along this proof, we will assume that $f$ is defined over $\emptyset$. In the general case one can rewrite the proof by relativizing all the notions to $\bar{c},\operatorname{HB}(\bar{c})$.

(a) Suppose $\operatorname{SU}(G(f))=\omega\cdot n+k$. 
Since $\varphi'(\bar{x},\bar{y},\bar{z})$ implies $\varphi(\bar{x},\bar{y},\bar{z})$, it is enough to show that if $\bar{a},\bar{b},\bar{d}$ are such that $\varphi(\bar{a},\bar{b},\bar{d})$ holds and $\dim(\bar{a},\bar{b},\bar{d})=n+k$, then for all $\bar{b}'$ such that $\varphi(\bar{a},\bar{b}',\bar{d})$ also holds, we have $\bar{b}=\bar{b}'$.

Since $\dim(\bar{a},\bar{b},\bar{d})=n+k$, by the definition of measuring formula, there are $\bar{a}_1,\bar{b}_1,\bar{d}_1$ with $\tp_{\mathcal{L}}(\bar{a}_1,\bar{b}_1,\bar{d}_1)=\tp_{\mathcal{L}}(\bar{a},\bar{b},\bar{d})$ such that ($\bar{a}_1,\bar{b}_1)\in G(f)$, $\operatorname{SU}(\bar{a}_1,\bar{b}_1)=\omega\cdot n+k$ and $\bar{d}_1$ is an enumeration of $\operatorname{HB}(\bar{a}_1,\bar{b}_1)$. 
Note that $\bar{b}_1\in\dcl_{\mathcal{L}_H}(\bar{a}_1)$, hence, by Proposition \ref{basicprop} (5), we have $\bar{b}_1\in\acl(\bar{a}_1,\operatorname{HB}(\bar{a}_1))$. Since $\bar{a}_1 \ind_{\operatorname{HB}(\bar{a}_1)}H$, this implies that $\bar{a}_1 \bar{b}_1\ind_{\operatorname{HB}(\bar{a}_1)}H$, we have 
$\operatorname{HB}(\bar{a}_1\bar{b}_1)=\operatorname{HB}(\bar{a}_1)$ and $\bar{d_1}$ is also an enumeration of $\operatorname{HB}(\bar{a}_1)$.

\textbf{Claim:} $\bar{b}_1\in \dcl_{\mathcal{L}}(\bar{a}_1,\bar{d}_1)$.

Let $\bar{b}_1'$ be such that $\tp_{\mathcal{L}}(\bar{a}_1,\bar{b}_1,\bar{d}_1)
=\tp_{\mathcal{L}}(\bar{a}_1,\bar{b}_1',\bar{d}_1)$. 
As $\bar{b}_1'\in\acl(\bar{a}_1,\bar{d}_1)$ and $(\bar{a}_1,\bar{d}_1)$ is $H$-independent, 
so is $(\bar{a}_1,\bar{b}_1',\bar{d}_1)$.
Therefore, by Lemma \ref{fundlem1}, we have $$\tp_{\mathcal{L}_H}(\bar{a}_1,\bar{b}_1,\bar{d}_1)=\tp_{\mathcal{L}_H}(\bar{a}_1,\bar{b}_1',\bar{d}_1).$$
Since $\bar{b}_1\in\dcl_{\mathcal{L}_H}(\bar{a}_1)$, we get $\bar{b}_1=\bar{b}_1'$ and the claim holds. 



Let $g$ be the $\Le$-definable function over $\emptyset$ such that $\bar{b}_1=g(\bar{a}_1,\bar{d}_1)$. Since $\tp_{\mathcal{L}}(\bar{a}_1,\bar{b}_1,\bar{d}_1)=\tp_{\mathcal{L}}(\bar{a},\bar{b},\bar{d})$, we also have $\bar{b}=g(\bar{a},\bar{d})$ and $\bar{b}\in\dcl(\bar{a},\bar{d})$. Thus, $\dim(\bar{a},\bar{d})=\dim(\bar{a},\bar{b},\bar{d})=n+k$. Suppose $M\models \varphi(\bar{a},\bar{b}',\bar{d})$, then $\dim(\bar{a},\bar{b}',\bar{d})=n+k$ (as $\dim(\varphi)=n+k=\dim(\bar{a},\bar{d})$). Since $\varphi$ is a measuring formula for $G(f)$, there are $(\bar{a}_2,\bar{b}_2,\bar{d}_2)$ with $\tp_{\mathcal{L}}(\bar{a}_2,\bar{b}_2,\bar{d}_2)=\tp_{\mathcal{L}}(\bar{a},\bar{b}',\bar{d})$ such that $(\bar{a}_2,\bar{b}_2)\in G(f)$, $\operatorname{SU}(\bar{a}_2,\bar{b}_2)=\omega\cdot n+k$ and $\bar{d}_2$ is an enumeration of $\operatorname{HB}(\bar{a}_2,\bar{b}_2)$. Since $\tp_{\Le}(\bar{a}_2,\bar{d}_2)=\tp_{\Le}(\bar{a},\bar{d})=\tp_{\Le}(\bar{a}_1,\bar{d}_1)$ and both $(\bar{a}_1,\bar{d}_1)$ and $(\bar{a}_2,\bar{d}_2)$ are $H$-independent, by Lemma \ref{fundlem1}, $\tp_{\Le_H}(\bar{a}_2,\bar{d}_2)=\tp_{\Le_H}(\bar{a}_1,\bar{d}_1)$. Since $(\bar{a}_1,\bar{b}_1)\in G(f)$ and $(\bar{a}_2,\bar{b}_2)\in G(f)$, we get $\tp_{\Le_H}(\bar{a}_2,\bar{b}_2,\bar{d}_2)=\tp_{\Le_H}(\bar{a}_1,\bar{b}_1,\bar{d}_1)$. Now that $\bar{b}_1=g(\bar{a}_1,\bar{d_1})$, we must have $\bar{b}_2=g(\bar{a}_2,\bar{d}_2)$, hence $\bar{b}'=g(\bar{a},\bar{d})=\bar{b}$ as desired.

(b)\ The proof that $\varphi''(\bar{x},\bar{y},\bar{z})$ is a measuring formula for $G(f)$ is similar to the proof of (a), and we leave it to the reader. We will now prove that $\exists \bar{y}\,\varphi''(\bar{x},\bar{y},\bar{z})$ is a measuring formula for $X$. First choose $\bar{a}\in X$ with maximal $SU$-rank and let $\bar{b}:=f(\bar{a})$. Just as in the proof of part (a), we know that $\bar{a} \ind_{\operatorname{HB}(\bar{a})}H$, and this implies that $\bar{a} \bar{b}\ind_{\operatorname{HB}(\bar{a})}H$ and thus $\operatorname{HB}(\bar{a}\bar{b})=\operatorname{HB}(\bar{a})$, so $k=|\bar{z}|=|\operatorname{HB}(\bar{a})|$. So we can take a defining formula for $X$ with the same number of extra variables as the one we used for $G(f)$.

We will show that $\exists \bar{y}\,\varphi''(\bar{x},\bar{y},\bar{z})$ is a measuring formula for $X$, so we check properties (i) to (iv) in Definition \ref{def-measuring-formula}.

(i) Just to emphasize the use of parameters in this condition, let us assume that $G(f)$ is defined over parameters $\bar{c}$ (rather than over $\emptyset$ as it was assumed at the beginning of the proof), and so $X$ is also definable over $\bar{c}$. Since $\varphi(\bar{x},\bar{y},\bar{z})$ is a measuring formula for $G(f)$, it is defined over 
$\bar{c},\operatorname{HB}(\bar{c})$, and so the formula $\exists y\,\varphi''(\bar{x},\bar{y},\bar{z})$ is also defined over $\bar{c},\operatorname{HB}(\bar{c})$. 

(ii) Consider $\bar{a}$ and the collection of tuples $\bar{d}\in H^k$ such that $\exists \bar{y}\,\varphi'(\bar{a},\bar{y},\bar{d})\wedge \rho(\bar{x},\bar{d})$ holds. This collection is finite since it is a subset of $\rho(\bar{a}, H^k)$ which has finitely many solutions because $\rho(\bar{x},\bar{z})$ is a measuring formula for $X$. 

Let $X_1$ be the set defined by  $\exists \bar{z} \in H^k\exists \bar{y}\,\varphi''(\bar{x},\bar{y},\bar{z})$.
Assume that $\bar{a}\in X$ is such that $SU(\bar{a})=SU(X)$.
Let $\bar{b}=f(\bar{a})$, then $SU(\bar{a}\bar{b})=SU(X)=SU(G(f))$.
As in the proof of (a), we have that $\operatorname{HB}(\bar{a}\bar{b})=\operatorname{HB}(\bar{a})$. Let $\bar{d}\in H^k$ be an enumeration
of this common $H$-basis. Since $\varphi'(\bar{x},\bar{y},\bar{z})$ is a measuring formula for $G(f)$, 
$\varphi'(\bar{a},\bar{b},\bar{d})$ holds. Similarly, since  $\rho(\bar{x}, \bar{z})$ is a measuring formula
for $X$,  $\rho(\bar{a},\bar{d})$ also holds, thus $\bar{a}\in X_1$. 

Now assume that
$\bar{a}\in X_1$ is such that $SU(\bar{a})=SU(X_1)$ and choose $\bar{d}\in H^k$ such that $\exists \bar{y}\, \varphi''(\bar{a},\bar{y},\bar{d})$ holds.
Let $\bar{b}$ be such that 
$\varphi''(\bar{a},\bar{b},\bar{d})=\varphi'(\bar{a},\bar{b},\bar{d})\wedge \rho(\bar{a},\bar{d})$ holds. 
Since  $\rho(\bar{x},\bar{z})$ is a measuring formula for $X$ and the rank of the tuple $\bar{a}\in X_1$ coincides with $SU(X_1)=SU(X)$, then $\bar{a}\in X$.


(iii) Assume that $SU(X)=\omega \cdot n+k$ and choose $\bar{a}\in X$ such that $SU(\tp(\bar{a}))=\omega \cdot n+k$. Let $\bar{d}\in H^k$. We have to show that
$\exists \bar{y}\,\varphi''(\bar{a},\bar{y},\bar{d})$ holds if and only if $\bar{d}$ is an enumeration of $\operatorname{HB}(\bar{a})$. So assume that $\exists \bar{y}\,\varphi''(\bar{a},\bar{y},\bar{d})$ holds, which implies that $\rho(\bar{a},\bar{d})$ also holds and we have that $\bar{d}$ is an enumeration of  $\operatorname{HB}(\bar{a})$.
Conversely, let $\bar{d}$ be an enumeration of $\operatorname{HB}(\bar{a})$. Let $\bar{b}=f(\bar{a})$. Then
$\bar{d}$ is an enumeration of $\operatorname{HB}(\bar{a}\bar{b})=\operatorname{HB}(\bar{a})$.
Since $\varphi'$ is a measuring formula for $G(f)$, we have that $\varphi'(\bar{a},\bar{b},\bar{d})$ holds. Also, since $\rho$ is a measuring formula for $X$ then $\rho(\bar{a},\bar{d})$ also holds. Thus, the formula $\exists \bar{y}\,\varphi''(\bar{a},\bar{y},\bar{d})$ also holds.

(iv) Finally, choose $\bar{e},\bar{d}_0$ such that $\dim(\bar{e},\bar{d}_0)=n+k$ and $\exists \bar{y}\,\varphi''(\bar{e},\bar{y},\bar{d}_0)$ holds. Since $\rho(\bar{x},\bar{z})$ is a measuring formula for $X$, we can find
$\bar{a},\bar{d}$ such that
$$\tp_{\mathcal{L}}(\bar{a},\bar{d})=\tp_{\mathcal{L}}(\bar{e},\bar{d}_0)$$
with $\operatorname{SU}(\bar{a})=\omega\cdot n+k$ and $\bar{d}$ an enumeration of $\operatorname{HB}(\bar{a})$
as we wanted.
\end{proof}

\begin{theorem} \label{th-Fubini} Let $M$ be a one-dimensional coherent measurable structure and let $(M,H)$ be a sufficiently saturated $H$-structure. Then, the dimension and measure of definable sets in $(M,H)$ satisfy the Fubini condition:
If $f:X\to Y$ is an $\Le_H$-definable surjective function such that for any $\bar{b}\in Y$ the definable set $f^{-1}(\bar{b})$ has dimension and measure $(n_1,k_1,\mu_1)$, and $Y$ has dimension and measure $(n_2,k_2,\mu_2)$. Then $X$ has dimension and measure $$(n_1,k_1,\mu_1)\odot (n_2,k_2,\mu_2).$$
\end{theorem}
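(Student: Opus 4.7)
The plan is to reduce the computation of the measure to the Fubini property of the underlying measurable $\Le$-structure $M$, via a carefully constructed measuring formula for the graph $G(f)$. The dimension part is cheap: Lemma \ref{lem-addSU} gives $\operatorname{SU}(\ov{a}\ov{b}) = \operatorname{SU}(\ov{a}/\ov{b}) \oplus \operatorname{SU}(\ov{b}) = \omega(n_1+n_2) + (k_1+k_2)$ for an $\Le_H$-generic $\ov{a} \in X$ with $\ov{b} = f(\ov{a})$, and Proposition \ref{additivityHB} yields $\operatorname{HB}(\ov{a}\ov{b}) = \operatorname{HB}(\ov{b}) \cupdot \operatorname{HB}(\ov{a}/\ov{b})$ of size $k_1+k_2$, so both $G(f)$ and $X$ have dimension $(n_1+n_2,k_1+k_2)$.

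For the measure, I would first invoke Theorem \ref{thm-def-SU} together with Claim \ref{claim-main} applied to the $\Le_H$-formula defining the family of fibers $f^{-1}(\ov{y})$ to extract a uniform $\Le$-formula $\varphi(\ov{x},\ov{z}_1;\ov{y},\ov{w})$ such that, for every $\Le_H$-generic $\ov{b}\in Y$ and every enumeration $\ov{w}$ of $\operatorname{HB}(\ov{b})$, the instance $\varphi(\ov{x},\ov{z}_1;\ov{b},\ov{w})$ is a measuring formula for $f^{-1}(\ov{b})$, with $\Le$-dimension $n_1+k_1$ and $\Le$-measure $\mu_1\cdot k_1!$. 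Let $\rho(\ov{y},\ov{z}_2)$ be a measuring formula for $Y$, of $\Le$-dimension $n_2+k_2$ and $\Le$-measure $\mu_2\cdot k_2!$. Set $|\ov{z}|=k_1+k_2$ and define
$$\theta(\ov{x},\ov{y},\ov{z})\;:=\;\bigvee_{S}\bigl(\varphi(\ov{x},\ov{z}|_S;\ov{y},\ov{z}|_{S^c})\,\wedge\,\rho(\ov{y},\ov{z}|_{S^c})\bigr),$$
where $S$ ranges over $k_1$-element subsets of the coordinates of $\ov{z}$. Using the permutation-invariance built into $\varphi$ and $\rho$ by the moreover part of Claim \ref{claim-main}, I would verify that $\theta$ is a measuring formula for $G(f)$: condition (iii) holds because, for generic $(\ov{a},\ov{b})$ and any enumeration $\ov{d}$ of $\operatorname{HB}(\ov{a}\ov{b})$, the (unique up to internal reshuffling) choice of $S$ for which $\ov{d}|_{S^c}$ enumerates $\operatorname{HB}(\ov{b})$ makes $\ov{d}|_S$ enumerate $\operatorname{HB}(\ov{a}/\ov{b})$; conditions (i), (ii), (iv) transfer from the corresponding properties of $\varphi$ and $\rho$ by compactness.

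Now I would apply $\Le$-Fubini inside $M$ to each disjunct $\theta_S$: integrating the fiber-measure $\mu_1 k_1!$ against the base-measure $\mu_2 k_2!$ gives $\operatorname{meas}_\Le(\theta_S)=\mu_1\mu_2\cdot k_1!k_2!$ at the top dimension $n_1+n_2+k_1+k_2$. The crucial combinatorial observation is that on $\Le$-generic tuples the disjuncts $\theta_S$ are pairwise disjoint: condition (iv) for $\theta$ transports any $\Le$-generic tuple to an $\Le_H$-generic one, for which the splitting $\operatorname{HB}(\ov{a}\ov{b})=\operatorname{HB}(\ov{b})\cupdot\operatorname{HB}(\ov{a}/\ov{b})$ is uniquely determined, so only one $S$ can witness $\theta$. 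Therefore $\operatorname{meas}_\Le(\theta)=\binom{k_1+k_2}{k_1}\mu_1\mu_2\cdot k_1!k_2!=\mu_1\mu_2(k_1+k_2)!$, and dividing by $|\mathbb{S}_{k_1+k_2}|$ gives $G(f)$ the measure $\mu_1\mu_2$. Finally, Lemma \ref{lem-funct}(b) transfers the measuring formula from $G(f)$ to $X$ via the projection $\ov{a}\mapsto(\ov{a},f(\ov{a}))$, which is a bijection on generics and preserves both dimension and $\Le$-measure, so $X$ has dimension and measure $(n_1+n_2,k_1+k_2,\mu_1\mu_2)=(n_1,k_1,\mu_1)\odot(n_2,k_2,\mu_2)$.

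The main obstacle I anticipate is the uniform construction of the fiber-measuring formula $\varphi(\ov{x},\ov{z}_1;\ov{y},\ov{w})$: Theorem \ref{thm-def-SU} and Claim \ref{claim-main} produce measuring formulas fiber by fiber over parameters $\ov{b},\operatorname{HB}(\ov{b})$, and one must carefully extract a single $\Le$-formula inside the existential $H$-quantifier and match its enumeration variable $\ov{w}$ with the enumeration supplied by $\rho$, which is precisely where the permutation-invariance clauses of Claim \ref{claim-main} are needed. A secondary subtlety is transferring the $\Le_H$-level uniqueness of the $H$-basis split to the $\Le$-level, which is what justifies the additivity of $\Le$-measures across the $\binom{k_1+k_2}{k_1}$ disjuncts.
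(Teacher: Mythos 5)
Your proposal is correct and follows essentially the same route as the paper: extract uniform fiber measuring formulas from Theorem \ref{thm-def-SU} and Claim \ref{claim-main}, combine them with a measuring formula for $Y$ into a permutation-symmetrized measuring formula for $G(f)$, apply $\Le$-Fubini plus the disjointness of the $\binom{k_1+k_2}{k_1}$ disjuncts on generics to get $\Le$-measure $(k_1+k_2)!\,\mu_1\mu_2$, divide by $|\mathbb{S}_{k_1+k_2}|$, and transfer to $X$ via Lemma \ref{lem-funct}. The only points you compress are handled in the paper by reducing to a single covering piece via finite additivity (Lemma \ref{lem-finAdd}) and by conjoining to the base formula the $\Le$-definable condition that the fiber formula has the prescribed dimension and measure, which is exactly what justifies your ``transfer by compactness'' step.
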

\begin{proof}
We may assume $f$ is defined over $\emptyset$. 
For every $\bar{b}\in Y$, let $X_{\bar{b}}:=f^{-1}(\bar{b})$. 
Consider the definable family $\{X_{\bar{b}}:\bar{b}\in Y\}$. 
We apply compactness, Theorem \ref{thm-def-SU} and Claim \ref{claim-main} to the parameter space $Y\cap\{\bar
{b}:\operatorname{SU}(\bar{b})=\omega\cdot n_2+k_2\}$ (which is a closed set in the space of $\Le_H$-types by continuity of $SU$-rank) to get
formulas 
$$\{\psi_i(\bar{y}),\psi_i'(\bar{y},\bar{v}),\eta_i'(\bar{x},\bar{z},\bar{y},\bar{v}):i\leq N\}$$ where $\{\eta_i'(\bar{x},\bar{z},\bar{y},\bar{v}):i\leq N\}$ are $\Le$-formulas, and the following conditions hold: 
\begin{enumerate}

\item
$|\bar{v}|=k_2$ and
$\psi_i'(\bar{b},H^{k_2})$ is finite for any $\bar{b}\in Y$. 
Also, if $\bar{b}\in Y$ and $\operatorname{SU}(\bar{b})=\omega\cdot n_2+k_2$ then there exists $i\leq N$ such that for any $\bar{h}\in H^{k_2}$,  the  formula $\psi_i'(\bar{b},\bar{h})$ holds if and only if $\bar{h}$ is an enumeration of $\operatorname{HB}(\bar{b})$. 
\item
$\psi_i(\bar{y}):=\exists\bar{v}\in H^{k_2}\psi_i'(\bar{y},\bar{v})$;
\item The set
$Y\cap \{\bar
{b}:\operatorname{SU}(\bar{b})=\omega\cdot n_2+k_2\}$ is covered by $\bigcup_{i\leq N}\psi_i(M^{|\bar{y}|})$. 

\item Whenever $\bar{b}\in Y$ and $\bar{h}\in H^{|\bar{v}|}$ is such that $\psi_i'(\bar{b},\bar{h})$ holds, then $\eta_i'(\bar{x},\bar{z};\bar{b},\bar{h})$ is a measuring formula of $X_{\bar{b}}$ with $|\bar{z}|=k_1$. In particular, $\eta_i'(\bar{x},\bar{z};\bar{b},\bar{h})$ has dimension $n_1+k_1$ over $\bar{b},\bar{h}$ and measure $|\mathbb{S}_{k_1}|\cdot\mu_1$. Furthermore, as in Lemma \ref{lem-almostExistential}, we have $$\forall\bar{x}\forall\bar{z}\left(\eta_i'(\bar{x},\bar{z};\bar{b},\bar{h})\to\bigvee_{j\leq N_i,\sigma\in\mathbb{S}_{k_2}}\Psi_{p_{i,j}}(\bar{x},\bar{z};\bar{b},\sigma(\bar{h}))\right)$$ where $p_{ij}$ are complete $\mathcal{L}_H$-types of $SU$-rank $\omega\cdot n_1+k_1$ over parameters $\bar{h},\bar{b}$.
\item Both formulas $\psi_i'(\bar{y},\bar{v})$ and $\eta_i'(\bar{x},\bar{z};\bar{y},\bar{v})$ are invariant under permutations of $\bar{v}$, and $\eta_i'(\bar{x},\bar{z};\bar{y},\bar{v})$ is invariant under permutations of $\bar{z}$.
\end{enumerate}

By finite additivity, we only need to show the Fubini Property for each of the sets $$Y_i:=Y\cap\psi_i(M^{|\bar{y}|})\setminus\left(\bigcup_{j<i}\psi_j(M^{|\bar{y}|})\right)$$ with $\operatorname{SU}(Y_i)=\omega\cdot n_2+k_2$ and $i\leq N$. 
Therefore, we may assume that $N=1$ and that there are formulas $\{\psi(\bar{y}),\psi'(\bar{y},\bar{v}),\eta'(\bar{x},\bar{z},\bar{y},\bar{v})\}$ and $\{\Psi_{p_j}(\bar{x},\bar{z};\bar{y},\bar{v}):j\leq N_0\}$ as claimed before. In particular, $\psi(M^{|\bar{y}|})$ contains all the generics of $Y$.

Let $\varphi'(\bar{y},\bar{v})$ be a measuring formula for $Y$ and let $\xi(\bar{y},\bar{v})$ be the $\Le$-formula saying that $\eta'(\bar{x},\bar{z};\bar{y},\bar{v})$ has dimension $n_1+k_1$ and measure $|\mathbb{S}_{k_1}|\cdot\mu_1$. Since $\varphi'(\bar{y},\bar{v})$ is a measuring formula for $Y$, it is invariant under permutations of $\bar{v}$. Also, since $\eta'$ is invariant under permutations of $\bar{v}$, we may take $\xi$ which is also invariant under permutations of $\bar{v}$. Define the $\mathcal{L}$-formula $$\varphi(\bar{y},\bar{v}):=\varphi'(\bar{y},\bar{v})\land \xi(\bar{y},\bar{v})\land\forall\bar{x}\forall\bar{z}\left(\eta'(\bar{x},\bar{z};\bar{y},\bar{v})\to\bigvee_{j\leq N_0,\sigma\in\mathbb{S}_{k_2}}\Psi_{p_{j}}(\bar{x},\bar{z};\bar{y},\sigma(\bar{v}))\right).$$ Note $\varphi$ is invariant under permutations of $\bar{v}$.
Since $\varphi'$ is a measuring formula for $Y$, if $\bar{b},\bar{h}$ is a tuple of dimension $n_2+k_2$ such that $\varphi'(\bar{b},\bar{h})$ holds,  there are $(\bar{b}',\bar{h}')\equiv_{\mathcal{L}}(\bar{b},\bar{h})$ 
such that $\bar{h}'$ is an enumeration of $\operatorname{HB}(\bar{b'})$ 
and $\bar{b}'\in Y$ with $\operatorname{SU}(\bar{b}')=\omega\cdot n_2+k_2$. Then $\psi(\bar{b}')$ holds (since $\psi(M^{|\bar{y}|})$ contains all the generics of $Y$) and there is $\bar{h}''\in H^{k_2}$ with $M\models \psi'(\bar{b}',\bar{h}'')$. Therefore, by condition (1), $\bar{h}''$ is a permutation of $\bar{h}'$. By the fact that $\psi'$ is invariant under permutations of $\bar{v}$, we get $\psi'(\bar{b}',\bar{h}')$ holds.
Thus, $\varphi(\bar{b}',\bar{h}')$ holds by condition (4), and so we conclude that $\varphi(\bar{b},\bar{h})$ holds because $\varphi(\bar{y},\bar{v})$ is an $\Le$-formula.
Hence, $\dim(\varphi(\bar{y},\bar{v})\triangle \varphi'(\bar{y},\bar{v}))<n_2+k_2$ and thus
$\varphi(\bar{y},\bar{v})$ is also a measuring formula for $Y$. 

Let $\displaystyle{\tau(\bar{x},\bar{y},\bar{z},\bar{v}):=\eta'(\bar{x},\bar{z};\bar{y},\bar{v})\land\varphi(\bar{y},\bar{v})}$ and  $\displaystyle{\tau'(\bar{x},\bar{y},\bar{z},\bar{v}):=\bigvee_{\sigma\in \mathbb{S}_{k_1+k_2}}\tau(\bar{x},\bar{y},\sigma(\bar{z},\bar{v})).}$ Notice that both $\tau(\bar{x},\bar{y},\bar{z},\bar{v})$ and $\tau'(\bar{x},\bar{y},\bar{z},\bar{v})$ are $\Le$-formulas.

\begin{claim} \label{tau-prime-measuring} The formula $\tau'(\bar{x},\bar{y},\bar{z},\bar{v})$ is a measuring formula for the definable set $G(f):=\{(\bar{a},f(\bar{a})):\bar{a}\in X\}$.
\end{claim}

We will first finish the proof of Theorem \ref{th-Fubini} assuming the Claim \ref{tau-prime-measuring}.

By the additivity of $\operatorname{SU}$-rank in $T^{ind}$ (Lemma \ref{lem-addSU}), $\operatorname{SU}(G(f))=\omega\cdot (n_1+n_2)+(k_1+k_2)$, and by the claim above the formula $\tau'(\bar{x},\bar{y},\bar{z},\bar{v})$ has dimension $(n_1+n_2)+(k_1+k_2)$. Suppose it has measure $\mu$, then the measure of $G(f)$ is $\mu/|\mathbb{S}_{k_1+k_2}|$. 
By Lemma \ref{lem-funct} part (a), $$\tilde{\tau}(\bar{x},\bar{y},\bar{z},\bar{v}):=\tau'(\bar{x},\bar{y},\bar{z},\bar{v})\land\forall\bar{y}'\left(\tau'(\bar{x},\bar{y}',\bar{z},\bar{v})\to\bar{y}=\bar{y}'\right)$$ is also a measuring formula for $G(f)$. 
Hence the measure of $\tilde{\tau}$ is also $\mu$.



Assume that $\rho(\bar{x},\bar{z}')$ is a measuring formula for $X$. Then by part (b) of 
Lemma \ref{lem-funct}, $|\bar{z}'|=|(\bar{z},\bar{v})|$ and we may assume $\bar{z}'=(\bar{z},\bar{v})$. Moreover, $\tau''(\bar{x},\bar{y},\bar{z},\bar{v}):=\tilde{\tau}(\bar{x},\bar{y},\bar{z},\bar{v})\wedge \rho(\bar{x},\bar{z},\bar{v})$ is also a measuring formula for $G(f)$, again with measure $\mu$, as $\tau''$ and $\tau'$ are measuring formulas for the same definable set. And $\exists \bar{y}\,\tau''(\bar{x},\bar{y},\bar{z},\bar{v})$ is a measuring formula for $X$, also with measure $\mu$, since \[M\models\forall\bar{x}\forall\bar{z}\forall\bar{v}\forall\bar{y}\forall\bar{y}'\left(\tau''(\bar{x},\bar{y},\bar{z},\bar{v})\land\tau''(\bar{x},\bar{y'},\bar{z},\bar{v})\to \bar{y}=\bar{y}' \right),\] by the definition of $\tilde{\tau}$. Therefore, the dimension and measure of $X$ is \[(n_1+n_2,k_1+k_2,\mu/|\mathbb{S}_{k_1+k_2}|).\]

To complete the proof, we only need to show that $$\frac{\mu}{|\mathbb{S}_{k_1+k_2}|}=\mu_1 \mu_2.$$
Note that by assumption, $\varphi(\bar{y},\bar{v})$ has dimension $n_2+k_2$ and measure $\mu_2\cdot |\mathbb{S}_{k_2}|$
and $\eta'(\bar{x},\bar{z};\bar{b},\bar{h})$ has dimension $n_1+k_1$ and measure $\mu_1\cdot |\mathbb{S}_{k_1}|$ over any $\bar{b},\bar{h}$
such that $\varphi(\bar{b},\bar{h})$ holds. 
Therefore, by the Fubini condition for $\Le$-formulas in $M$, we have that $\tau(\bar{x},\bar{y},\bar{z},\bar{v})$ has dimension $(n_1+n_2)+(k_1+k_2)$ and measure $|\mathbb{S}_{k_1}|\cdot|\mathbb{S}_{k_2}|\cdot\mu_1\mu_2$.

Recall that $\tau'(\bar{x},\bar{y},\bar{z},\bar{v})=\bigvee_{\sigma\in \mathbb{S}_{k_1+k_2}}\tau(\bar{x},\bar{y},\sigma(\bar{z},\bar{v}))$. 
By construction, $\tau$ is invariant under permutations in each of the tuples $\bar{z}$ and $\bar{v}$. That is, for $\sigma_1\in \mathbb{S}_{k_1}$ and $\sigma_2\in \mathbb{S}_{k_2}$ we have that 
$\tau(\bar{x},\bar{y},\sigma_1(\bar{z}),\sigma_2(\bar{v}))$ holds if and only if $\tau(\bar{x},\bar{y},\bar{z},\bar{v})$ holds.
Consider the collection $D$ of definable sets given by the realizations of formulas of the form $\tau(\bar{x},\bar{y},\sigma(\bar{z},\bar{v}))$ with $\sigma\in \mathbb{S}_{k_1+k_2}$. Now, we consider the action $\Omega$ from $\mathbb{S}_{k_1+k_2}\times D$ to $D$ given by $\sigma\boldsymbol{\cdot} \tau(\bar{x},\bar{y},\sigma_1(\bar{z},\bar{v}))=\tau(\bar{x},\bar{y},\sigma \circ \sigma_1(\bar{z},\bar{v}))$. \\

\textbf{Claim:} For all $\bar{a},\bar{b},\bar{d},\bar{e}$ with $\dim(\bar{a},\bar{b},\bar{d},\bar{e})=n_1+n_2+k_1+k_2$, for all $\sigma,\sigma'\in\mathbb{S}_{k_1+k_2}$, if $M\models \tau(\bar{a},\bar{b},\sigma(\bar{d},\bar{e}))\land\tau(\bar{a},\bar{b},\sigma'(\bar{d},\bar{e}))$, then  $\sigma'\circ\sigma^{-1}\in 
\mathbb{S}_{k_1}\times\mathbb{S}_{k_2}$.\\

Suppose $M\models \tau(\bar{a},\bar{b},\sigma(\bar{d},\bar{e}))$, by Claim \ref{tau-prime-measuring} and by the definition of measuring formula, there are $(\bar{a}',\bar{b}',\bar{d}',\bar{e}')$ with the same $\Le$-type of $(\bar{a},\bar{b},\bar{d},\bar{e})$ such that $(\bar{a}',\bar{b}')\in G(f)$, $\operatorname{SU}(\bar{a}',\bar{b}')=\omega\cdot(n_1+n_2)+k_1+k_2$ and $(\bar{d}',\bar{e}')$ is an enumeration of $\operatorname{HB}(\bar{a}',\bar{b}')$. We write $\sigma(\bar{d}',\bar{e}')$ as $(\bar{h}_1,\bar{h}_2)$ where $|\bar{h}_1|=k_1$ and $|\bar{h}_2|=k_2$. Then $M\models\varphi'(\bar{b}',\bar{h}_2)$ and $M\models \eta'(\bar{a}',\bar{h}_1,\bar{b}',\bar{h}_2)$. Since $\operatorname{SU}(\bar{a}'/\bar{b}',\bar{h}_2)\leq \omega\cdot n_1+k_1$ and $\bar{h}_2\subseteq\operatorname{HB}(\bar{b}')$ (as $\varphi'(\bar{b}',H^{k_2})$ is a finite set by the definition of measuring formula), and $\operatorname{SU}(\bar{b}')\leq \omega\cdot n_2+k_2$, we must have $\operatorname{SU}(\bar{b}')= \omega\cdot n_2+k_2$ and $\bar{h}_2$ is an enumeration of $\operatorname{HB}(\bar{b}')$ and $\operatorname{SU}(\bar{a}'/\bar{b}',\bar{h}_2)= \omega\cdot n_1+k_1$ and $\bar{h}_1$ is an enumeration of $\operatorname{HB}(\bar{a}'/\bar{b}',\operatorname{HB}(\bar{b}'))$. Suppose we also have $M\models \tau(\bar{a},\bar{b},\sigma'(\bar{d},\bar{e}))$. Since $\tau$ is an $\Le$-formula, we obtain $M\models \tau(\bar{a}',\bar{b}',\sigma'(\bar{d}',\bar{e}'))$. Write $\sigma'(\bar{d}',\bar{e}')$ as $(\bar{h}_1',
\bar{h}_2')$ with $|\bar{h}_1'|=k_1$ and $\bar{h}_2'=k_2$. By the same argument as before, $\bar{h}_2'$ is an enumeration of $\operatorname{HB}(\bar{b}')$ and $\bar{h}_1'$ is an enumeration of $\operatorname{HB}(\bar{a}'/\bar{b}',\operatorname{HB}(\bar{b}'))$. Since $\bar{h}_2$ and $\bar{h}_1$ are also enumerations of the same $H$-basis respectively, there are $\sigma_1\in\mathbb{S}_{k_1}$ and $\sigma_2\in\mathbb{S}_{k_2}$ such that $\sigma_1(\bar{h}_1)=\bar{h}_1'$ and $\sigma_2(\bar{h}_2)=\bar{h}_2'$. Therefore $(\sigma_1,\sigma_2)\circ\sigma(\bar{d}',\bar{e}')=\sigma'(\bar{d}',\bar{e}')$. As $(\bar{d}',\bar{e}')$ is a tuple of distinct elements, we must have $(\sigma_1,\sigma_2)\circ\sigma=\sigma'$ and $\sigma'\circ\sigma^{-1}=(\sigma_1,\sigma_2)\in\mathbb{S}_{k_1}\times\mathbb{S}_{k_2}$.\\

Therefore, by the claim above, the sets defined by the formulas $\tau(\bar{x},\bar{y},\sigma(\bar{z},\bar{v}))$ and $\tau(\bar{x},\bar{y},\sigma'(\bar{z},\bar{v}))$ intersect generically if and only if they are equal, and since $\tau$ is invariant under permutations of $\bar{z}$ and of $\bar{v}$, this occurs precisely when $\sigma'\circ\sigma^{-1}\in\mathbb{S}_{k_1}\times \mathbb{S}_{k_2}$. For all other pairs $(\sigma,\sigma')$, the sets defined are generically disjoint.

Hence, the stabilizer $\text{Stab}(\Omega)$ of a set in the collection $D$ is isomorphic to $\mathbb{S}_{k_1}\times \mathbb{S}_{k_2}$

and thus by the Orbit-Stabilizer Theorem there are $|\mathbb{S}_{k_1+k_2}/\text{Stab}(\Omega)|=\frac{(k_1+k_2)!}{k_1!k_2!}=\binom{k_1+k_2}{k_1}$ many generically disjoint sets, all of which have measure $\mu_1\mu_2\cdot|\mathbb{S}_{k_1}||\mathbb{S}_{k_2}|$.


Hence, $$\mu=\binom{k_1+k_2}{k_1}\cdot |\mathbb{S}_{k_1}|\cdot |\mathbb{S}_{k_2}|\cdot\mu_1\mu_2=(k_1+k_2)!\cdot\mu_1\mu_2=\mu_1\mu_2\cdot|\mathbb{S}_{k_1+k_2}|,$$ and we conclude that the dimension and measure of the set $X$ is \[\left(n_1+n_2,k_1+k_2,\mu_1\mu_2\right)=(n_1,k_1,\mu_1)\odot (n_2,k_2,\mu_2).\] This finishes the proof of Theorem \ref{th-Fubini}.
\end{proof}

\begin{proof}[Proof of Claim \ref{tau-prime-measuring}]
We will show that conditions (i)-(iv) from Definition \ref{def-measuring-formula} hold for $\tau'(\bar{x},\bar{y},\bar{z},\bar{v})$ and $G(f)$.

\textbf{(i)} Note that $G(f)$ is defined over $\emptyset$, and so is the formula $\tau'(\bar{x},\bar{y},\bar{z},\bar{v})$.

\textbf{(ii)} By Lemma \ref{lem-addSU}, $\operatorname{SU}(G(f))=\omega\cdot(n_1+n_2)+(k_1+k_2)$. Also, note that $\bar{z},\bar{v}$ is a tuple of variables of length $k_1+k_2$. By definition of measuring formulas, $\varphi(\bar{b},H^{k_2})$ is finite for any $\bar{b}$, and $\eta'(\bar{a},H^{k_1};\bar{b},\bar{h})$ is finite for any $\bar{a},\bar{b},\bar{h}$. Thus, $\tau'(\bar{a},\bar{b},H^{k_1+k_2})$ is finite for any $\bar{a},\bar{b}$.

Let $Z$ be the set defined by $\exists\bar{z},\bar{v}\in H^{k_1+k_2}\tau'(\bar{x},\bar{y},\bar{z},\bar{v})$. We need to show that $\operatorname{SU}(G(f)\triangle Z)<\omega\cdot(n_1+n_2)+(k_1+k_2)$, that is, $G(f)$ and $Z$ have the same set of generics. 

Let $(\bar{a},\bar{b})\in G(f)$ with $\operatorname{SU}(\bar{a},\bar{b})=\omega\cdot(n_1+n_2)+(k_1+k_2)$. Then $\bar{b}\in Y$ and $\operatorname{SU}(\bar{b})=\omega\cdot n_2+k_2$. 
Let $\bar{h}\in H^{k_2}$ be an enumeration of $\operatorname{HB}(\bar{b})$. Since $\varphi(\bar{y},\bar{v})$ is a measuring formula for $Y$, we have that $M\models \varphi(\bar{b},\bar{h})$.
Also, we have $\operatorname{SU}(\bar{a}/\bar{b},\bar{h})=\omega\cdot n_1+k_1$ and $\bar{a}\in X_{\bar{b}}$. By item (4) of the listed properties in the proof of Theorem \ref{th-Fubini} the formula $\eta'(\bar{x},\bar{y};\bar{b},\bar{h})$ is a measuring formula for $X_{\bar{b}}$.
Let $\bar{h}_1\in H^{k_1}$ be an enumeration of $\operatorname{HB}(\bar{a}/\bar{b},\bar{h})$. 
Then we must have $M\models \eta'(\bar{a},\bar{h}_1;\bar{b},\bar{h})$. Therefore, we have $M\models \eta'(\bar{a},\bar{h}_1;\bar{b},\bar{h})\wedge \varphi(\bar{b},\bar{h})$, and in particular, $(\bar{a},\bar{b})\in Z$.

Now suppose that $(\bar{a},\bar{b})\in Z$ is a tuple of maximal $SU$-rank. Then there are $\bar{h}_1\in H^{k_1}$ and $\bar{h}\in H^{k_2}$ such that $M\models \eta'(\bar{a},\bar{h}_1;\bar{b},\bar{h})\land\varphi(\bar{b},\bar{h})$.
Since $(M,H)\models\exists\bar{v}\in H^{k_2}\varphi(\bar{b},\bar{v})$, we get $\operatorname{SU}(\bar{b})\leq \omega\cdot n_2+k_2$.
On the other hand, $\varphi(\bar{b},\bar{h})$ implies 
$$\forall\bar{x}\forall\bar{z}\left(\eta'(\bar{x},\bar{z};\bar{b},\bar{h})\to\bigvee_{j\leq N_0, \sigma'\in\mathbb{S}_{k_2}}\Psi_{p_{j}}(\bar{x},\bar{z};\bar{b},\sigma'(\bar{h}))\right)$$
for some types $p_j$ of $SU$-rank $\omega\cdot n_1+k_1$. Thus, $\Psi_{p_{j}}(\bar{a},\bar{h}_1;\bar{b},\sigma'(\bar{h}))$ holds for some $j\leq N_0$ and $\sigma'\in\mathbb{S}_{k_2}$. So, we get $\operatorname{SU}(\bar{a}/\bar{b},\bar{h})\leq\omega\cdot n_1+k_1$. Since $\varphi(\bar{y},\bar{v})$ is a measuring formula for $Y$, $\varphi(\bar{b},H^{k_2})$ is finite, and therefore $\bar{h}\subseteq \operatorname{HB}(\bar{b})$. Hence, $$\operatorname{SU}(\bar{a},\bar{b})=\operatorname{SU}(\bar{a}/\bar{b})\oplus \operatorname{SU}(\bar{b})=\operatorname{SU}(\bar{a}/\bar{b},\bar{h})\oplus \operatorname{SU}(\bar{b})\leq \omega\cdot(n_1+n_2)+(k_1+k_2).$$

Since we proved already that $Z$ contains the generics of $G(f)$, and $(\bar{a},\bar{b})$ is a tuple of maximal rank in $Z$, we must have that $\operatorname{SU}(\bar{a}/\bar{b})=\omega\cdot n_1+k_1$ and $\operatorname{SU}(\bar{b})=\omega\cdot n_2+k_2$. Again, since $\varphi(\bar{y},\bar{v})$ is a measuring formula for $Y$, we have that $\bar{b}\in Y$ and $\bar{h}$ is an enumeration of $\operatorname{HB}(\bar{b})$.
Therefore, $\eta'(\bar{x},\bar{z};\bar{b},\bar{h})$ is a measuring formula for $X_{\bar{b}}$. 
Since $\operatorname{SU}(\bar{a}/\bar{b})=\omega\cdot n_1+k_1$ and $\operatorname{SU}(X_{\bar{b}}\triangle \exists \bar{z}\,\eta'(\bar{x},\bar{z};\bar{b},\bar{h}))<\omega\cdot n_1+k_1$, we get that
$\bar{a}\in X_{\bar{b}}$, that is, $(\bar{a},\bar{b})\in G(f)$.

\textbf{(iii)} We will show that if $(\bar{a},\bar{b})\in G(\bar{f})$ and $\operatorname{SU}(\bar{a},\bar{b})=\omega\cdot(n_1+n_2)+(k_1+k_2)$, then for any $\bar{h}_0\in H^{k_1+k_2}$, $M\models \tau'(\bar{a},\bar{b},\bar{h}_0)$  if and only if $\bar{h}_0$ is an enumeration of $\operatorname{HB}(\bar{a},\bar{b})$. 

Suppose $\bar{h}_0$ is an enumeration of $\operatorname{HB}(\bar{a},\bar{b})$.
By additivity of $\operatorname{H}$-basis (Proposition \ref{additivityHB}), we have $\operatorname{HB}(\bar{a},\bar{b})=\operatorname{HB}(\bar{a}/\bar{b},\operatorname{HB}(\bar{b}))\cup \operatorname{HB}(\bar{b})$. Thus,
there is $\sigma\in \mathbb{S}_{k_1+k_2}$ such that $\sigma(\bar{h}_0)=(\bar{h}_1,\bar{h})$ where $\bar{h}$ is an enumeration of $\operatorname{HB}(\bar{b})$ and $\bar{h}_1$ an enumeration of $\operatorname{HB}(\bar{a}/\bar{b},\operatorname{HB}(\bar{b}))$. Since $\varphi(\bar{y},\bar{v})$ is a measuring formula for $Y$, the value of $\operatorname{SU}(\bar{b})$ is $\omega\cdot n_2+k_2$ and $\bar{h}$ is an enumeration of $\operatorname{HB}(\bar{b})$, we have $M\models \varphi(\bar{b},\bar{h})$. Similarly, $M\models \eta'(\bar{a},\bar{h}_1;\bar{b},\bar{h})$. Hence, $M\models \tau'(\bar{a},\bar{b},\bar{h}_0)$.

For the other direction, if $M\models \tau'(\bar{a},\bar{b},\bar{h}_0)$ for some $\bar{h}_0\in H^{k_1+k_2}$, then there is $\sigma\in\mathbb{S}_{k_1+k_2}$ such that $\sigma(\bar{h}_0)=(\bar{h}_a,\bar{h}_b)$, and $M\models \varphi(\bar{b},\bar{h}_b)\land\eta'(\bar{a},\bar{h}_a,\bar{b},\bar{h}_b)$. Since $\varphi$ is a measuring formula for $Y$, $\bar{h}_b$ is an enumeration of $\operatorname{HB}(\bar{b})$, and since $\eta'$ is a measuring formula for $X_{\bar{b}}$, $\bar{h}_a$ is an enumeration of $\operatorname{HB}(\bar{a}/\bar{b},\operatorname{HB}(\bar{b}))$.

\textbf{(iv)} By assumption, we have $\dim(\varphi(\bar{y},\bar{v}))=n_2+k_2$ and $\dim(\eta'(\bar{x},\bar{z};\bar{b},\bar{h}))=n_1+k_1$ for any parameter $\bar{b},\bar{h}$ satisfying $\varphi(\bar{b},\bar{h})$.
Therefore, since the dimension is additive, $$\dim(\tau'(\bar{x},\bar{y},\bar{z},\bar{v}))=(n_1+n_2)+(k_1+k_2).$$

Suppose now that $M\models \tau'(\bar{a},\bar{b},\bar{h}_0)$ and that $\dim(\bar{a},\bar{b},\bar{h}_0)=(n_1+n_2)+(k_1+k_2)$. Then there is $\sigma\in \mathbb{S}_{k_1+k_2}$ such that $\sigma(\bar{h}_0)=(\bar{h}_1,\bar{h})$ and $M\models \eta'(\bar{a},\bar{h}_1;\bar{b},\bar{h})\land\varphi(\bar{b},\bar{h})$.

Since $\dim(\eta'(\bar{x},\bar{z};\bar{b},\bar{h}))=n_1+k_1$ and $\dim(\varphi(\bar{y},\bar{v}))=n_2+k_2$, we must have 
$\dim(\bar{b},\bar{h})=n_2+k_2$ and
$\dim(\bar{a},\bar{h}_1/\bar{b},\bar{h})=n_1+k_1$. Also, since $\varphi$ is a measuring formula for $Y$, there are $(\bar{b}',\bar{h}')$ with the same $\mathcal{L}$-type as $(\bar{b},\bar{h})$ such that $\operatorname{SU}(\bar{b}')=\omega\cdot n_2+k_2$ and $\bar{h}'$ is an enumeration of $\operatorname{HB}(\bar{b}')$.

Let $p(\bar{x},\bar{y};\bar{b},\bar{h})=\tp_{\mathcal{L}}(\bar{a},\bar{h}_1/\bar{b},\bar{h})$
and consider the type $p(\bar{x},\bar{y};\bar{b}',\bar{h}')$. Let $(\bar{a}',\bar{h}_1')$ be a realization of $p(\bar{x},\bar{y};\bar{b}',\bar{h}')$. Then $M\models \eta'(\bar{a}',\bar{h}_1';\bar{b}',\bar{h}')$ and we also have $\dim(\bar{a}',\bar{h}_1'/\bar{b}',\bar{h}')=n_1+k_1$. Since $\bar{b}'\in Y$ and  $\eta'(\bar{x},\bar{z};\bar{b}',\bar{h}')$ is a measuring formula for $X_{\bar{b}'}$,
there are $\bar{a}'',\bar{h}_1''$ realizing $p(\bar{x},\bar{z};\bar{b}',\bar{h}')$ such that $\operatorname{SU}(\bar{a}''/\bar{b}',\bar{h}')=\omega\cdot n_1+k_1$ and $\bar{h}_1''$ is an enumeration of $\operatorname{HB}(\bar{a}''/\bar{b}',\bar{h}')$. Now the tuples $\bar{a}'',\bar{b}'$ and $\bar{h}_0'=\sigma^{-1}(\bar{h}_1'',\bar{h}')$ satisfy the following:
\begin{itemize}
\item $\tp_{\Le}(\bar{a}'',\bar{b}',\bar{h}_1'',\bar{h}')=\tp_{\Le}(\bar{a}',\bar{b}',\bar{h}_1',\bar{h}')=\tp_{\Le}(\bar{a},\bar{b},\bar{h}_1,\bar{h})$, so we can conclude that $\tp_{\Le}(\bar{a}'',\bar{b}',\bar{h}_0')=\tp_{\Le}(\bar{a},\bar{b},\bar{h}_0)$.
\item By additivity of $\operatorname{SU}$-rank, $\operatorname{SU}(\bar{a}'',\bar{b}')=\omega\cdot (n_1+n_2)+(k_1+k_2)$ and by the additivity of the $\operatorname{HB}$-basis, $\bar{h}_0'$ is an enumeration of $\operatorname{HB}(\bar{a}'',\bar{b}')$.\qedhere
\end{itemize}

\end{proof}

The results of this section can be summarized as:

\begin{theorem} \label{main-theorem}
Suppose $T=\operatorname{Th}(M)$ is the theory of a one-dimensional coherent measurable structure $M$. Then $T^{ind}$ is the theory of a generalized measurable structure with measure semi-ring \[\mathcal{R}=((\mathbb{N}\times \mathbb{N}^{>0}\times \mathbb{R}^{>0})\cup\{(0,0,k):k\in\mathbb{N}\},\oplus,\odot)\]
\end{theorem}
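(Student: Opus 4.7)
The plan is to verify that the dimension-and-measure assignment $h \colon \operatorname{Def}((M,H)) \to \mathcal{R}$ introduced in Definition \ref{def-dimMeas} satisfies the axioms of a generalized measurable structure (in the sense of \cite{AMSW}) with values in the ordered semi-ring $\mathcal{R}$. Since the definition of generalized measurable structure is a direct analogue of Definition \ref{MS-measurable} with $\mathbb{N}\times\mathbb{R}^{>0}$ replaced by a general ordered semi-ring, the verification reduces to four points: (1) $h$ is well-defined and takes values in $\mathcal{R}$; (2) finitely many values condition; (3) definability of the fibers; (4) Fubini. I would proceed by assembling the lemmas of Section \ref{section:measure} in this order.

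First, I would address well-definedness: by Claim \ref{claim-main}, every $\Le_H$-definable set $X$ admits a measuring formula, so $h(X) = (n,k,\mu/|\mathbb{S}_k|)$ is defined, and Lemma \ref{lem-equiMeasure} shows the triple does not depend on the choice of measuring formula. For a finite non-empty set, $\operatorname{SU}(X) = 0$, so $(n,k) = (0,0)$ and the measure agrees with the cardinality, matching the form $(0,0,m)$ allowed in $\mathcal{R}$. For sets of infinite $SU$-rank, the first two coordinates land in $\mathbb{N}\times\mathbb{N}^{>0}$ and the measure is a positive real (since the measuring $\Le$-formula has positive dimension and hence positive $\Le$-measure in the coherent measurable $M$). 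So $h$ lands in $\mathcal{R}$.

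Next, for the finitely many values condition, given an $\Le_H$-formula $\varphi(\bar{x};\bar{y})$, Theorem \ref{thm-def-SU} provides a finite partition of the parameter space and a finite set of pairs $(n_i,k_i)$ accounting for all possible $SU$-ranks of $\varphi(\bar{x};\bar{c})$. On each piece of the partition a measuring formula $\tilde{\eta}_i'(\bar{x},\bar{z};\bar{h}_i,\bar{y})$ is available; since $M$ is coherent measurable the dimension-measure of $\tilde{\eta}_i'$ takes only finitely many values as $(\bar{h}_i,\bar{c})$ varies, so dividing by $|\mathbb{S}_{k_i}|$ yields a finite set $D_\varphi \subset \mathcal{R}$. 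The definability condition then follows from Lemma \ref{lem-defMeas}, where the explicit formulas $\exists \bar{v}_i\in H^{|\bar{v}_i|}(\psi_i'(\bar{v}_i,\bar{y}) \wedge \zeta_{ij}(\bar{v}_i,\bar{y}))$ cut out the fibers of $h$.

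Finally, the Fubini property, which is the axiom that requires real work, is exactly the content of Theorem \ref{th-Fubini} together with finite additivity (Lemma \ref{lem-finAdd}): indeed, for $f \colon X \to Y$ definable and surjective, Theorem \ref{thm-def-SU} partitions $Y$ into finitely many $\Le_H$-definable pieces $Y_1,\dots,Y_r$ on which the fiber data $h(f^{-1}(\bar{b}))$ is constant, each $Y_i$ has a well-defined $h(Y_i)$, and Theorem \ref{th-Fubini} computes $h(f^{-1}(Y_i))$ as $h(f^{-1}(\bar{b}))\odot h(Y_i)$ for $\bar{b}\in Y_i$; then Lemma \ref{lem-finAdd} assembles the pieces via $\oplus$, which is precisely the $\max$-plus-sum rule of the semi-ring. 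The main obstacle is already absorbed into Theorem \ref{th-Fubini}: the delicate point is that the measure must be divided by $|\mathbb{S}_{k_1+k_2}|$ rather than $|\mathbb{S}_{k_1}|\cdot|\mathbb{S}_{k_2}|$, and the combinatorial identity $\binom{k_1+k_2}{k_1}\cdot|\mathbb{S}_{k_1}|\cdot|\mathbb{S}_{k_2}| = |\mathbb{S}_{k_1+k_2}|$ accounts for interleavings of the two $H$-bases and ensures compatibility of $\odot$ with the underlying $\Le$-measure. With these four ingredients in place the theorem follows.
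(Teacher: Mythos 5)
Your proposal is correct and matches the paper's own treatment: Theorem \ref{main-theorem} is stated there as a summary of Section \ref{section:measure}, and its implicit proof is exactly your assembly of Claim \ref{claim-main} (existence of measuring formulas), Lemma \ref{lem-equiMeasure} (well-definedness), Theorem \ref{thm-def-SU} with Lemma \ref{lem-defMeas} (finitely many values and definability), and Lemma \ref{lem-finAdd} with Theorem \ref{th-Fubini} (additivity and Fubini, with the fiberwise partition of $Y$ exactly as you describe). The only blemish, inherited from the paper's own definition of $\mathcal{R}$, is your remark that infinite-rank sets have second coordinate in $\mathbb{N}^{>0}$: a set of rank $\omega\cdot n$ with $n>0$ has dimension pair $(n,0)$, which strictly speaking lies outside $(\mathbb{N}\times\mathbb{N}^{>0}\times\mathbb{R}^{>0})\cup\{(0,0,k):k\in\mathbb{N}\}$ --- a typo in the semi-ring rather than a gap in your argument.
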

\begin{remark}\label{remark-FiniteSet}
It is easy to see that if a definable set $A$ in $(M,H)$ is $\mathcal{L}$-definable by some $\mathcal{L}$-formula $\varphi(x)$, then $\varphi(x)$ is a measuring formula for $A$, and $A$ has dimension $(\dim(\varphi(x),0)$ and measure $\mu(A)/0!=\mu(A)$.
In particular a finite set $A$ has dimension $(0,0)$ and measure $|A|$. 

But the calculation of dimension and measure of a finite set also follows from the Fubini property and finite additivity in general. Suppose $\{a\}$ has dimension and measure $\beta\in\mathcal{R}$, it is clear $\beta\neq (0,0,0)$. Now consider the definable map $id:\{a\}\to\{a\}$, then $\beta\odot\beta=\beta$ by Theorem \ref{th-Fubini}, hence $\beta=(0,0,1)$. Then finite additivity implies that the dimension and measure of a finite set is indeed $(0,0,|A|)$. 
\end{remark}

Theorem \ref{main-theorem} has some nice consequences.

\begin{remark}
Assume $(M,\cdot)$ is a one-dimensional coherent measurable group. Therefore, the (normalized) measure makes
the structure $(M,\cdot)$ a definably amenable group (see Definition 3.1 in \cite{CheSi}). Its expansion $(M,\cdot,H)$ is generalized measurable, which will induce a probability measure $\mu$ on definable subsets $X\subseteq M$ defined as \[\mu(X)=\begin{cases}\operatorname{measure}(X) &\text{if $\operatorname{dimension}(X)=(1,0)$ (i.e., if $\operatorname{SU}(X)=\omega$)}\\ 0 &\text{if $\operatorname{dimension}(X)=(0,k)$ (i.e., $\operatorname{SU}(X)<\omega$)}.\end{cases}\]

We will now see that this measure is invariant under translations. Let $a\in M$, and consider the map that sends $x$ to 
$a\cdot x$. If $X\subseteq M$ is $\mathcal{L}_H$-definable then $Y=a\cdot X$ is also $\Le_H$-definable and there a definable bijection $f_a:X\to Y$ defined by $f_a(x)=a\cdot x$. Moreover, for every $b\in Y$ we have $h(f_a^{-1}(b))=h(\{a^{-1}b\})=((0,0)\ ,\ 1)$. So, by the Fubini property in the $\Le_H$-structure $(M,\cdot,H)$ (Theorem \ref{th-Fubini}), we have that $h(X)=(\operatorname{dimension}(Y)\oplus(0,0),\operatorname{measure}(Y)\cdot 1)=h(Y)$. Therefore, if $SU(X)=\omega$,  \[\mu(X)=\operatorname{measure}(X)=\operatorname{measure}(Y)=\mu(Y),\] and $\mu(X)=0=\mu(Y)$ in all other cases because the $\operatorname{SU}$-rank is invariant under definable bijections. Now, if $X_1,\ldots,X_k$ are disjoint $\Le_H$-definable subsets of $M$ (at least one of $\operatorname{SU}$-rank $\omega$) then by the finite additivity property (Lemma \ref{lem-finAdd}) we will have \[\mu\left(\bigcup_{i=1}^k X_i\right)=\operatorname{measure}\left(\bigcup_{i=1}^k X_i\right)=\sum_{1\leq i\leq k, \operatorname{SU}(X_i)=\omega} \operatorname{measure}(X_i)=\sum_{1\leq i\leq k}\mu(X_i).\] This shows that $(M,\cdot,H)$ is still a definably amenable group.
\end{remark}

\brmk Let us consider the special case of measures of $\mathcal{L}_H$-definable subsets of $H(M)^k$ of $SU$-rank $k$. By Lemma \ref{fundlem2} a definable subset $Y$ of $H(M)^k$ is the set of solutions of a formula of the form $(\bar x\in H^k)\wedge \theta(\bar x,\bar c)$, where $\bar c$ is a tuple from $M$ and $\theta(\bar x,\bar y)$ is an $\mathcal{L}$-formula. Recall that $H$ with the induced structure from $M$ is a \emph{generic trivialization} of $M$ (see \cite{BeVa3}) and that there is a nice correspondence between its theory $T^*$ and the theory $T$. For example $T$ is supersimple of $SU$-rank 1 or strongly minimal if and only if $T^*$ is supersimple of $SU$-rank 1 or strongly minimal (see also \cite{BeVa3}). If $Y$ is a definable subset of $H(M)^k$ of dimension $k$, the density property implies that generic properties of $\theta(\bar x,\bar c)$ hold for $Y$. In our setting, this translates as the measure of $Y$ being equal to $\operatorname{meas}(\theta(\bar x,\bar c))$, where $\operatorname{meas}(\theta(\bar x,\bar c))$ is the $\mathcal{L}$-measure of the formula $\theta(\bar x,\bar c)$, as the following shows:
\bc\label{maxdimH}
Let $Y\subset H^k$ be a definable set of $SU$-rank $k$ and assume that $\theta(\bar x,\bar y)$ is an $\mathcal{L}$-formula and $\bar c$ is a tuple in $M$ such that $Y$ is defined by the expression 
$(\bar x\in H^k)\wedge \theta(\bar x,\bar c)$. Then the measure of $Y$ is the $\mathcal{L}$-measure of the formula $\theta(\bar x,\bar c)$. 
\ec

\begin{proof}
Let $\bar x$, $\bar z$ be tuples of variables of length $k$ and let $\varphi(\bar x,\bar z,\bar c)$ be the formula
\[\displaystyle{\bigvee_{\sigma \in \mathbb{S}_k} \theta(\sigma(\bar z),\bar c)\wedge (\bar x=\sigma(\bar z))}.\] Then
 $\varphi(\bar x,\bar z,\bar c)$ is a measuring formula for $Y$. Conditions (i), (ii), (iv) in Definition \ref{def-measuring-formula} are easy to verify. For (iii) note that since $\operatorname{SU}(Y)=k$, any tuple $\bar{h}:=(h_1,\ldots,h_k)$ in $Y$ with $h_i\neq h_j$ for $i\neq j$ and $\bar{h}\cap \operatorname{HB}(\bar{c})=\emptyset$ is of the maximal $\operatorname{SU}$-rank, and any $\sigma(\bar{h})$ for $\sigma\in\mathbb{S}_k$ is an enumeration of $\operatorname{HB}(\bar{h}/\bar{c},\operatorname{HB}(\bar{c}))$. Since $\varphi(\bar x,\bar z,\bar c)$ is invariant under permutations of $\bar{z}$, we get $\varphi(\bar{h},\sigma(\bar{h}),\bar{c})$ holds. Note that the set defined by $\varphi(\bar{x},\bar{z},\bar{c})$ is of the form $\bigcup_{\sigma\in \mathbb{S}_k}\{(\bar{a},\sigma^{-1}(\bar{a})):M\models \theta(\bar{a},\bar{c})\}$, which is generically disjoint. Hence, $\operatorname{meas}(\varphi(\bar x,\bar z,\bar c))=k!\cdot \operatorname{meas}(\theta(\bar x,\bar c))$.
Thus, the measure of $Y$ is $\operatorname{meas}(\varphi(\bar x,\bar z,\bar c))/k!=\operatorname{meas}(\theta(\bar x,\bar c))$.
\end{proof}
\ermk

\begin{definition}[Definition 3.1 \cite{BeVa3}]\label{def-generic-trivialzation} Let $(M,H)$ be an $H$-structure and consider the new structure $H^*(M)$ whose universe is $H$ and whose definable sets are generated by traces of the form $\varphi(H(M)^n)$, where $\varphi(\bar x)$ is an $\emptyset$-definable $\mathcal{L}$-formula. We call such structure a \emph{generic trivialization of $T$}. 
\end{definition}

It is shown in \cite{BeVa3} that the theory $T^*=Th(H^*(M))$ is complete, has quantifier elimination and depends only on $T$ and not on the specific $H$-structure $(M,H)$.

\bc If $M$ is a one-dimensional coherent measurable structure and $(M,H)$ is an $H$-structure, then the generic trivialization $H^*(M)$ is a measurable structure of dimension 1, namely the dimension and measure of the universe is $(1,1)$.
\ec
\bdem  
By quantifier elimination in trivializations, if $A\subseteq H^*(M)^n$ is definable in the generic trivialization, $A$ can also be seen as a definable subset in the structure $(M,H)$ (definable with parameters in $H$) and thus it has a measure and a dimension in the pair. We will show that the restriction of the dimension and measure from the pair $(M,H)$ provides a measure and a dimension for $H^*(M)$ satisfying Definition \ref{MS-measurable}. 

First note that $SU(H)=1$ and so $\operatorname{dimension}(H)=(0,1)$ and by Corollary \ref{maxdimH}, $\operatorname{measure}(H)=1$.
More generally, if $A\subseteq H^n$ is definable, then $SU(A)\leq n$ in $(M,H)$ and $\operatorname{dimension}(A)=(0,SU(A))$, hence the restriction of the dimension function will always be finite and bounded by the dimension of the underlying power of $H$.

Hence we have a function $h:\operatorname{Def}(H^*(M))\to \mathbb{N}\times \mathbb{R}^{>0}\cup\{(0,0)\}$ by sending a definable set $A\in \operatorname{Def}(H^*(M))$ of dimension $(0,k)$ and measure $\mu$ in $(M,H)$ (as an $\mathcal{L}_H$-definable set) to the value $(k,\mu)$.
By Remark \ref{remark-FiniteSet}, if $A$ is finite, then $h(A)=(0,|A|)$. 

The only delicate property left is definability of the function. Let $\varphi(\bar x,\bar y)$ be a formula over $\emptyset$ in $H^*(M)$. Then there is an $\mathcal{L}_H$-formula $\varphi'(\bar x,\bar y)$ over $\emptyset$ that defines the same subset of $H^{|\bar x|+|\bar y|}$  in $(M,H)$. For each value $(0,d,\mu)\in D_{\varphi'}$ choose an $\mathcal{L}_H$-formula $\psi'_{(0,d,\mu)}(\bar y)$ over $\emptyset$ such that
for any tuple $\bar a$, we have 
$h(\varphi'(\bar x,\bar a))=(d,\mu)$ if and only if
$(M,H) \models \psi'_{(0,d,\mu)}(\bar a)$. Since $\varphi'(M^{{|\bar x|+|\bar y|}})\subseteq H^{|\bar x|+|\bar y|}$, we may assume that 
$\psi'_{(0,d,\mu)}(M^{|\bar y|})\subseteq H^{|\bar y|}$ and thus we may assume that $\psi'_{(0,d,\mu)}(\bar y)=\theta_{(0,d,\mu)} (\bar y)\land (\bar{y}\in H^{|\bar y|})$ where $\theta_{(0,d,\mu)}(\bar y)$ is an $\mathcal{L}$-formula over $\emptyset$. We can use the trace of the $\mathcal{L}$-formula $\theta_{(0, d,\mu)}(\bar y)$ in $H^{|\bar y|}$ as the corresponding defining formula in the trivialization. Note that this new formula in the trivialization is again over $\emptyset$.

The other properties just follow from the corresponding properties of $h$ in the pair. For example, since $h$ attains finitely many values on each formula of the pair, it also attains finitely many values on uniformly definable subsets of $H^*(M)$.

\edem
\section{Questions}




In \cite{AMSW,Wolf} the authors define the notion of \emph{multidimensional asymptotic classes}, which are classes of finite structures whose ultraproducts are generalized measurable structures. If $M$ is an ultraproduct of a one-dimensional asymptotic class, then by Theorem \ref{main-theorem}, $(M,H)$ is a generalized measurable structure. By the results in \cite{Zou}, $(M,H)$ is also pseudofinite. 

\bpreg Is $(M,H)$ elementary equivalent to an ultraproduct of finite structures in a multidimensional asymptotic class?
\epreg
 The answer to this question might shed some light on whether the extra hypothesis that we used in Section \ref{section:comparing-dimensions} holds in general.


\begin{thebibliography}{99}
\bibitem{AMSW} S. Anscombe, D. Macpherson, C. Steinhorn, D. Wolf. \emph{Multidimensional asymptotic classes and generalised measurable structures}. In preparation.
\bibitem{BeVa} A. Berenstein, E. Vassiliev, \emph{On lovely pairs of geometric structures.} Annals of Pure and Applied Logic, Volume 161, Issue 7, April 2010,
Pages 866-878.
\bibitem{BeVa2} A. Berenstein, E. Vassiliev. \emph{Geometric structures with a dense independent subset.}
Selecta Mathematica, Volume 22, Issue 1, 2016, pp 191-–225.
\bibitem{BeVa3} A. Berenstein, E. Vassiliev. \emph{Generic Trivializations of Geometric Theories}, 
Mathematical Logic Quarterly, Volume 60, Issue 4-5, 2014, pp 243–371.
\bibitem{Carmona} J.F. Carmona. \emph{Forking geometry on theories with an independent predicate.} Arch. Math. Logic 54, 247–255 (2015).
\bibitem{Ch} Z. Chatzidakis, \emph{Model Theory of Difference Fields}. In P. Cholak (Ed.), The Notre Dame Lectures (Lecture Notes in Logic, pp. 45-94). Cambridge: Cambridge University Press. 
\bibitem{ChPi} Z. Chatzidakis, A. Pillay, Generic structures and simple theories,
Annals of Pure and Applied Logic, Volume 95, Issues 1-3, 1998, pp 71-92.
\bibitem{CheSi} A. Chernikov, P. Simon. \emph{Definably amenable NIP groups}. Journal of the American Mathematical Society. 
Volume 31, Number 3, pages 609–641. 2018. 
\bibitem{EM} R. Elwes, D. Macpherson. \emph{A survey of asymptotic classes and measurable structures.} In \emph{Model Theory with Applications to Algebra and Analysis} (vol. 2.). London Mathematical Society Lecture Note Series 350. Cambridge University Press. 2008.
\bibitem{MS} D. Macpherson, C. Steinhorn. \emph{One-dimensional asymptotic classes of finite structures.} Transactions of the American Mathematical Society. Volume 360, pages 411-448. 2007.
\bibitem{Pi-str} A. Pillay. \emph{Strongly minimal pseudofinite structures}. Preprint, available at \url{https://arxiv.org/pdf/1411.5008.pdf}
\bibitem{Va} E. Vassiliev. \emph{Generic pairs of $SU$-rank 1 structures}, Annals of Pure and Applied Logic, vol.120 (2003), 103-149.
\bibitem{Wolf} D. Wolf. \emph{Multidimensional asymptotic classes of finite structures}. PhD. Thesis. University of Leeds. 2016.
\bibitem{Zou} T. Zou. \emph{Pseudofinite $H$-structures and groups definable in supersimple $H$-structures}. Journal of Symbolic Logic. Volume 84(3), pp 937-956, 2019.
\bibitem{ZouDifference} T. Zou. \emph{Pseudofinite difference fields and counting dimensions}. Journal of Mathematical Logic. Volume 21(1), 2021.


\end{thebibliography}
\end{document}